\documentclass[a4paper, 10pt]{preprint}

\usepackage[left=1.5in, right=1.5in, bottom=1.5in]{geometry}
\usepackage{comment}
\usepackage{mhenvs}
\usepackage{mhsymb}
\usepackage{orcidlink}

\usepackage[full]{textcomp}
\usepackage[osf]{newtxtext}
\usepackage{amssymb}
\usepackage{amsmath}
\usepackage{mathtools}
\usepackage{mhequ}
\usepackage{booktabs}
\usepackage{tikz}
\usepackage{mathrsfs}
\usepackage{longtable}
\usepackage{microtype}
\usepackage{wasysym}
\usepackage{centernot}
\usepackage{enumitem}

\usepackage{hyperref}

\usepackage{crossreftools}
\usepackage{physics}
\usepackage[bbgreekl]{mathbbol}
\usepackage{pgfplots}

\pgfplotsset{width=6.7cm,compat=1.9}
\usepackage[normalem]{ulem}
\mathtoolsset{showonlyrefs=true}

\newif\ifdark
\IfFileExists{dark}{\darktrue}{\darkfalse}

\ifdark

\definecolor{darkred}{rgb}{0.9,0.2,0.2}
\definecolor{darkblue}{rgb}{0.7,0.3,1}
\definecolor{darkgreen}{rgb}{0.1,0.9,0.1}
\definecolor{pagebackground}{rgb}{.15,.21,.18}
\definecolor{pageforeground}{rgb}{.84,.84,.85}
\pagecolor{pagebackground}
\AtBeginDocument{\globalcolor{pageforeground}}

\else

\definecolor{darkred}{rgb}{0.7,0.1,0.1}
\definecolor{darkblue}{rgb}{0.4,0.1,0.8}
\definecolor{darkgreen}{rgb}{0.1,0.7,0.1}
\definecolor{pagebackground}{rgb}{1,1,1}
\definecolor{pageforeground}{rgb}{0,0,0}

\fi

\definecolor{newred}{RGB}{208,16,76}
\definecolor{newgreen}{RGB}{34,125,81}

\makeatletter
\newcommand{\globalcolor}[1]{%
	\color{#1}\global\let\default@color\current@color
}
\makeatother

\DeclareSymbolFont{timesoperators}{T1}{ptm}{m}{n}
\SetSymbolFont{timesoperators}{bold}{T1}{ptm}{b}{n}

\makeatletter
\renewcommand{\operator@font}{\mathgroup\symtimesoperators}
\makeatother

\DeclareMathAlphabet{\mathbbm}{U}{bbm}{m}{n}
\DeclareFontFamily{U}{BOONDOX-calo}{\skewchar\font=45 }
\DeclareFontShape{U}{BOONDOX-calo}{m}{n}{
	<-> s*[1.05] BOONDOX-r-calo}{}
\DeclareFontShape{U}{BOONDOX-calo}{b}{n}{
	<-> s*[1.05] BOONDOX-b-calo}{}
\DeclareMathAlphabet{\mcb}{U}{BOONDOX-calo}{m}{n}
\SetMathAlphabet{\mcb}{bold}{U}{BOONDOX-calo}{b}{n}

\makeatletter 

\newcommand*{\fat}{}% Check if undefined
\DeclareRobustCommand*{\fat}{%
	\mathbin{\mathpalette\bigcdot@{}}}
\newcommand*{\bigcdot@scalefactor}{.5}
\newcommand*{\bigcdot@widthfactor}{1.15}
\newcommand*{\bigcdot@}[2]{%
	\sbox0{$#1\vcenter{}$}% math axis
	\sbox2{$#1\cdot\m@th$}%
	\hbox to \bigcdot@widthfactor\wd2{%
		\hfil
		\raise\ht0\hbox{%
			\scalebox{\bigcdot@scalefactor}{%
				\lower\ht0\hbox{$#1\bullet\m@th$}%
			}%
		}%
		\hfil
	}%
}

\DeclareRobustCommand{\TitleEquation}[2]{\texorpdfstring{\StrLeft{\f@series}{1}[\@firstchar]$\if b\@firstchar\boldsymbol{#1}\else#1\fi$}{#2}}

\makeatother

\theoremnumbering{Alph}
\renewtheorem{theorem*}{Theorem}
\renewtheorem{corollary*}{Corollary}

\renewtheorem{example*}{Example}
\numberwithin{equation}{section}

\def\slash{\leavevmode\unskip\kern0.18em/\penalty\exhyphenpenalty\kern0.18em}
\def\dash{\leavevmode\unskip\kern0.18em--\penalty\exhyphenpenalty\kern0.18em}

\let\epsilon\varepsilon

\def\${|\!|\!|}

\setlist{noitemsep,topsep=4pt}
\def\para_#1{/\!\!/_{\!#1}}

\def\f{\frac}

\begin{document}

\title{Statistical Inference for Homogenization Limits Driven by Wiener or Hermite Processes}
\author{Pablo Ramses Alonso-Martin}
\date{\today}

\institute{University of Warwick, Dept. of Statistics}
\maketitle
\begin{abstract}
We study the effective estimation of the diffusivity and Hurst parameter for the homogenized limit of a class of slow/fast systems. Depending on the system parameters, this limit solves a stochastic differential equation driven by either a Wiener process or a Hermite process. In the class of models we consider, the fast variable is a fractional Ornstein--Uhlenbeck process. We show that estimators constructed from the homogenized limit remain consistent when applied to appropriately subsampled data generated by the original slow/fast system.

A key tool in our analysis is the consistency of renormalized quadratic variations for a family of additive functionals of the fast process. Using Wiener chaos expansions, we obtain an \(L^2\)-orthogonal decomposition of these renormalized quadratic variations. This allows us to show that, under appropriate subsampling conditions, the consistency properties of the estimators are preserved even when the data is generated by the slow/fast system rather than the homogenized limit. We also show that, under stricter subsampling conditions, a non-central limit theorem is preserved in the case where the fluctuations of the estimator around the true value are non-Gaussian. As a direct consequence of convergence in \(L^2\), we obtain consistency of an estimator for the limiting self-similarity that does not require knowledge of the limiting diffusivity. Finally, we show that our results apply to a class of one-dimensional fluctuation models.

\end{abstract}
\tableofcontents
\section{Introduction}
The primary goal of this article is to study the behaviour of the quantity
\begin{equation}
    \label{eq: estimators}
C_{\varepsilon,\delta} = \begin{cases}
    N^{2H^*(m)-1}\sum_{i=0}^{N-1} \bigl(X^{\varepsilon}_{(i+1)\delta}-X^{\varepsilon}_{i\delta}\bigr)^2 \quad &H^*(m)>1/2 \\
    \sum_{i=0}^{N-1} \bigl(X^{\varepsilon}_{(i+1)\delta}-X^{\varepsilon}_{i\delta}\bigr)^2 \quad &H^*(m)\leq 1/2
    \end{cases}
\end{equation}
where \(X^{\varepsilon}_t\) is the first component of the solution to the system of equations 
\begin{equation}
\label{eq: multiscale system}
\begin{cases}
dX^{\varepsilon}_t = \alpha(\varepsilon, H^*(m)) G(Y^{\varepsilon}_t) dt \\
dY^{\varepsilon}_t = -\f{1}{\varepsilon}Y_t^{\varepsilon} dt + \f{\sigma}{\varepsilon^H} dB^H_t
\end{cases}
\end{equation}
for some \(G\in L^2(\mathbb{R}, \gamma)\) with Hermite rank \(m\geq 1\). Here \(B^H_t\) is a fractional Brownian motion with Hurst parameter 
\(H\in (0,1)\) and \(\gamma = \mathcal{N}(0, 1)\) is the invariant measure of the fast process \(Y^{\varepsilon}_t\). The quantity \(H^*(m) = (H-1)m + 1\) 
determines the qualitative behaviour of the limiting process and the scaling factor \(\alpha(\varepsilon, H^*(m))\) is defined as
\begin{equation}
\label{eq: scaling factor}
\alpha(\varepsilon, H^*(m)) = \begin{cases}
    \varepsilon^{H^*(m)-1} \quad &H^*(m)>1/2 \\
    \f{1}{\sqrt{\varepsilon}|\log(\varepsilon)|} \quad &H^*(m)=1/2 \\
    \f{1}{\sqrt{\varepsilon}} \quad &H^*(m)<1/2
    \end{cases}
\end{equation}
As shown in \cite{campeseContinuousBreuerMajor2020,gehringerFunctionalLimitTheorems2022a} under suitable assumptions on \(G\) the slow variable \(X^{\varepsilon}_t\) converges weakly in appropriate path spaces to an effective process \(X_t\) which, depending on the regularity of the fast process and the Hermite rank of \(G\), is either a rescaled Wiener process or a rescaled Hermite process of order \(m\):
\begin{equation}
\label{eq: effective limit}
X_t \;=\; \begin{cases}
\sqrt{C}\,W_t, & H^*(m)\leq 1/2,\\[0.4em]
\sqrt{C}\,Z_t^{H^*(m),m}, & H^*(m)>1/2,
\end{cases}
\end{equation}
where the scaling constant \(C=\mathbb{E}[X_1^2]\) encodes the size of the fluctuations of the limit.

The interest in such quantities \eqref{eq: estimators} stems from their use as statistical estimators for the fluctuation size and Hurst parameter in the limiting equation of the slow process \(X^{\varepsilon}_t\) as \(\varepsilon\rightarrow 0\). 
Such systems arise naturally in the context of rough homogenization for random ODEs of the form 
\[\dot{x}^{\varepsilon}_t = h(x^{\varepsilon}_t)g(y^{\varepsilon}_t) + \alpha(\varepsilon)G(y^{\varepsilon}_t)\]
where the solution to the slow equation in \eqref{eq: multiscale system} acts as a driver but also plays the role of a second order term in the perturbation expansion of the vector field that determines the evolution of interest, 
allowing us to study the fluctuations of the slow process around its averaged limit.

The weak convergence stated above makes the pair \((H', C)\), with \(H' = H^*(m)\vee 1/2\) the self-similarity index of the limiting process, the natural target of statistical inference: together they determine the second-order fluctuation structure of \(X^{\varepsilon}\). A finer inferential question would be to characterize the limiting process uniquely, which in the regime \(H^*(m)>1/2\) requires also the Hermite rank \(m\) of \(G\). Estimating \(m\) directly would, however, demand either observing the fast process \(Y^{\varepsilon}\) or knowing \(G\) explicitly---both of which defeat one of the main motivations for multiscale inference---and we therefore leave it outside the scope of this work. For most applications it is in any case sufficient to distinguish a Wiener limit (\(H^*(m)\leq 1/2\)) from a long-range dependent one (\(H^*(m)>1/2\)), and, within the latter regime, the Gaussian case \(m=1\) (fractional Brownian motion) from the non-Gaussian cases \(m\geq 2\), the first of which is the Rosenblatt process \cite{veillettePropertiesNumericalEvaluation2013}. Beyond \(m=2\) little is known about the distribution of Hermite processes and, to the best of our knowledge, their simulation has only been addressed very recently \cite{ayacheNumericalSimulationGeneralized2025}, so resolving \(m\) past the Gaussian/non-Gaussian dichotomy is of limited practical interest.

We follow a \textbf{plug-in} inference strategy: we take estimators that are known to be consistent for the parameters of the effective equation and apply them to discrete observations of the multiscale process \(X^{\varepsilon}\), asking under what joint regimes of the sampling step \(\delta\) and the scale separation \(\varepsilon\) the classical guarantees survive. The quantity \(C_{\varepsilon,\delta}\) defined in \eqref{eq: estimators} is precisely the estimator we use for \(C\): when \(H^*(m)\leq 1/2\) the limit is a Wiener process and \(C_{\varepsilon,\delta}\) reduces to its standard quadratic variation; when \(H^*(m)>1/2\) the limit exhibits long-range dependence and consistency of \(C_{\varepsilon,\delta}\) on the effective process is known from \cite{tudorVariationsFractionalBrownian} for the fractional Brownian case \(m=1\), from \cite{tudorVariationsEstimatorsSelfsimilarity2009} for the Rosenblatt case \(m=2\), and from \cite{chronopoulouSelfsimilarityParameterEstimation2011} for higher orders. To establish consistency of \(C_{\varepsilon,\delta}\) on the multiscale process we follow a strategy similar to \cite{tudorVariationsEstimatorsSelfsimilarity2009,chronopoulouSelfsimilarityParameterEstimation2011}, with the additional complication of having to control the multiscale nature of the problem.

Because the normalization of \(C_{\varepsilon,\delta}\) depends explicitly on \(H'\), estimating \(C\) presupposes access to the self-similarity index, so we also need an estimator for \(H'\). We consider the ratio estimator
\begin{equation}
\label{eq: estimator of H}
\hat{H}_{\varepsilon,\delta} = \f{1}{2} - \f{1}{2\log 2}\log\!\left(
\f{\sum_{i=0}^{2N-1}\bigl(X^{\varepsilon}_{(i+1)\delta/2}-X^{\varepsilon}_{i\delta/2}\bigr)^2}
{\sum_{i=0}^{N-1}\bigl(X^{\varepsilon}_{(i+1)\delta}-X^{\varepsilon}_{i\delta}\bigr)^2}\right),
\end{equation}
which compares quadratic variations on two nested partitions. A more orthodox transformation of a single quadratic variation would require the knowledge of \(C\) to estimate \(H'\); the ratio is instead scale-invariant, and once \(L^2\)-consistency of \(C_{\varepsilon,\delta}\) is established, consistency of \(\hat{H}_{\varepsilon,\delta}\) follows by the continuous mapping theorem. This motivates the two-step procedure we analyze throughout the article: first estimate \(H'\) via \(\hat{H}_{\varepsilon,\delta}\), then plug the estimate into the normalization of \(C_{\varepsilon,\delta}\) to recover \(C\).

\subsection{Organization of the article and Notation}
The rest of the article is organized as follows. In Section 2 we introduce all the preliminary concepts required for the rest of the manuscript. In Section 3 we motivate the need for a subsampling scheme when observing the slow process of a multiscale system like \eqref{eq: multiscale system} by 
showing that the quadratic variation of the slow process is asymptotically biased when the process is observed at an unconditional high-frequency. In Section 4 we show that under an appropriate subsampling rate the renormalized quadratic variation is asymptotically unbiased and we
provide explicit rates. Finally, specializing to the case where the estimator belongs to a finite Wiener chaos we show in Section 5 that the renormalized quadratic variation is consistent in all cases and in Section 6 we prove a non-Central Limit Theorem that resembles the classical results
for Hermite processes but using multiscale observations.

We will use the notation \(A(x)\sim B(x)\) to denote \(\lim_{x}\f{A(x)}{B(x)} = 1\) where \(x\) will either go to \(\infty\) or to \(0\) depending on the context. In other words, it denotes asymptotic equivalence not just up to the order of magnitude but 
also to proportionality constants. We will also use the notation \(A(x)\asymp B(x)\) to denote asymptotic equivalence up to a constant. That means \(\lim_{x}\f{A(x)}{B(x)} = C<\infty\) for some unspecified constant \(|C|>0\).
We will also use \(A(x)\lesssim B(x)\) to denote that \(\lim \frac{A(x)}{B(x)} = 0\) where \(x\) may vanish or blow up depending on the context. This is equivalent to being asymptotically bounded up to an unspecified constant.

\section{Preliminaries}
\subsection{Analysis on the Wiener space}
We work on an irreducible Gaussian probability space \((\Omega,\mathcal{F},\mathbb{P})\) carrying an isonormal Gaussian family \(\{W(h):h\in H\}\) indexed by the separable Hilbert space \(H=L^2(\mathbb{R},dt)\); that is, \(h\mapsto W(h)\) is linear and \(\mathbb{E}[W(h)W(g)]=\langle h,g\rangle_H\). In our setting \(W(h)=\int_{\mathbb{R}}h(s)\,dW_s\) is the Wiener integral against a standard Brownian motion \(W_t\).

The Hermite polynomials
\[H_n(x)=(-1)^n e^{x^2/2}\f{d^n}{dx^n}e^{-x^2/2}\]
form an orthogonal basis of \(L^2(\mathbb{R},\gamma)\) with \(\gamma=\mathcal{N}(0,1)\): every \(G\in L^2(\mathbb{R},\gamma)\) admits an expansion \(G(x)=\sum_{n\geq m}c_n H_n(x)\), and the smallest \(m\) with \(c_m\neq 0\) is the \emph{Hermite rank} of \(G\). Their behaviour under jointly Gaussian arguments is characterised by the following classical identity, which we will invoke throughout.
\begin{proposition}
\label{prop: hermite orthogonality}
Let \((X,Y)\) be a jointly Gaussian vector with \(\mathbb{E}[X]=\mathbb{E}[Y]=0\) and \(V[X]=V[Y]=1\). Then
\[\mathbb{E}[H_m(X)H_n(Y)]=\begin{cases} n!\,\mathbb{E}[XY]^n, & m=n, \\ 0, & m\neq n.\end{cases}\]
\end{proposition}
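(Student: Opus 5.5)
The plan is to prove the identity with the generating function of the Hermite polynomials,
\[
e^{tx - t^2/2} = \sum_{n\ge 0} \f{t^n}{n!} H_n(x), \qquad t,x\in\mathbb{R}.
\]
This expansion follows from the Rodrigues formula used to define \(H_n\) in the paper. Write \(e^{tx-t^2/2} = e^{x^2/2}e^{-(x-t)^2/2}\) and Taylor expand \(t\mapsto e^{-(x-t)^2/2}\) at \(t=0\). Its \(n\)-th derivative at \(t=0\) is \((-1)^n \f{d^n}{dx^n}e^{-x^2/2}\), which gives the coefficients \(H_n(x)/n!\). Fix \(\rho = \mathbb{E}[XY]\). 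Since \((X,Y)\) is a centred jointly Gaussian vector with unit variances, \(sX+tY\) is centred Gaussian with variance \(s^2+t^2+2st\rho\).

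The first key step computes
\[
\mathbb{E}\bigl[e^{sX - s^2/2}\, e^{tY - t^2/2}\bigr] = e^{-(s^2+t^2)/2}\,\mathbb{E}\bigl[e^{sX+tY}\bigr] = e^{-(s^2+t^2)/2} e^{(s^2+t^2+2st\rho)/2} = e^{st\rho}.
\]
The second step expands both sides as power series in \((s,t)\):
\[
\sum_{m,n\ge 0} \f{s^m t^n}{m!\,n!}\,\mathbb{E}[H_m(X)H_n(Y)] = \sum_{n\ge0} \f{(st\rho)^n}{n!}.
\]
Identifying coefficients of \(s^m t^n\) shows that the left coefficient vanishes for \(m\ne n\). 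For \(m=n\) it gives \(\mathbb{E}[H_n(X)H_n(Y)]/(n!)^2 = \rho^n/n!\), i.e. \(\mathbb{E}[H_n(X)H_n(Y)] = n!\rho^n\), as claimed.

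The main obstacle is justifying the exchange of expectation with the double series, and hence the coefficient identification. Both sides are entire functions of \((s,t)\in\mathbb{C}^2\). To interchange I would use the bound
\[
\sum_{n}\f{|t|^n}{n!}|H_n(x)| \le C\, e^{c|t|^2 + |t||x|},
\]
which holds, for example, via the explicit formula \(H_n(x)=\sum_k \f{(-1)^k n!}{k!(n-2k)!2^k}x^{n-2k}\): it gives \(\sum_n \f{|t|^n}{n!}|H_n(x)| \le e^{|t||x| + |t|^2/2}\). The product of two such bounds is integrable against the Gaussian law of \((X,Y)\), because \(e^{a|X|+b|Y|}\) has finite expectation. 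Fubini--Tonelli then legitimizes the termwise expectation, and uniqueness of power series coefficients finishes the proof.

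An alternative route avoids the analytic issue. If \(|\rho|<1\), write \(Y=\rho X+\sqrt{1-\rho^2}Z\) with \(Z\) independent of \(X\), and use the Ornstein--Uhlenbeck/Mehler identity \(\mathbb{E}[H_n(\rho x+\sqrt{1-\rho^2}Z)] = \rho^n H_n(x)\). Combined with the orthogonality \(\mathbb{E}[H_mH_n(X)]=\delta_{mn}n!\) (the case \(\rho=1\), obtained by integration by parts from the Rodrigues formula), this gives the result. The case \(|\rho|=1\) reduces to that orthogonality together with \(H_n(-x)=(-1)^nH_n(x)\). I would present the generating-function proof as the main argument, since it handles all \(\rho\) uniformly.
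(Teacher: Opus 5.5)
Your proof is correct. The paper gives no proof of this proposition: it quotes it as a ``classical identity'' and uses it as a black box, so there is no argument in the paper to compare yours against.

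What you have written is the standard textbook proof. Nualart's book on Malliavin calculus, for instance, differentiates \(\mathbb{E}[e^{sX-s^2/2}e^{tY-t^2/2}]=e^{st\mathbb{E}[XY]}\) at \(s=t=0\), which is the same step as your coefficient identification.

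Your normalisation matches the paper's. Since \(H_n\) is defined by the Rodrigues formula without a \(1/n!\), the generating function is \(\sum_n t^nH_n(x)/n!\), and the constant correctly comes out as \(n!\) rather than \(1/n!\).

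The interchange of expectation and series is the only delicate point, and you handle it properly:
\begin{itemize}
\item the explicit expansion of \(H_n\) gives \(\sum_n \tfrac{|t|^n}{n!}|H_n(x)|\le e^{|t||x|+t^2/2}\);
\item \(\mathbb{E}[e^{a|X|+b|Y|}]<\infty\) for jointly Gaussian \((X,Y)\);
\item so by Tonelli the coefficient double series converges absolutely for all real \((s,t)\), termwise expectation is legitimate, and the coefficients are unique.
\end{itemize}
This makes explicit the step that textbook versions usually leave implicit.

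Your alternative route through Mehler's formula is also valid. It only avoids the interchange issue, however, if you prove \(\mathbb{E}[H_n(\rho x+\sqrt{1-\rho^2}Z)]=\rho^nH_n(x)\) without taking expectations inside the generating function. One way is the pointwise addition formula \(H_n(ax+by)=\sum_k\binom{n}{k}a^kb^{n-k}H_k(x)H_{n-k}(y)\) for \(a^2+b^2=1\), combined with \(\mathbb{E}[H_j(Z)]=0\) for \(j\ge 1\).
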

Denote by \(\mathcal{H}_n\) the \(L^2(\Omega)\)-closure of \(\mathrm{Span}\{H_n(W(h)):\|h\|_H=1\}\). We then have the orthogonal decomposition \(L^2(\Omega,\mathcal{F},\mathbb{P})=\bigoplus_{n\geq 0}\mathcal{H}_n\).

For \(f\in L^2(\mathbb{R}^n)\) the multiple Wiener--It\^o integral
\[I_n(f)=\int_{\mathbb{R}^n}f(x_1,\dots,x_n)\,dW(x_1)\cdots dW(x_n)\]
is the unique linear isometry \(L^2(\mathbb{R}^n)\to L^2(\Omega)\) extending the natural definition on product indicators. Writing \(\mathrm{Sym}(f)\) for the symmetrisation of \(f\) one has \(I_n(f)=I_n(\mathrm{Sym}(f))\) and
\[\mathbb{E}[I_p(f)I_q(g)]=\delta_{pq}\,p!\,\langle \mathrm{Sym}(f),\mathrm{Sym}(g)\rangle_{L^2(\mathbb{R}^p)}.\]
For symmetric \(f\in L^2(\mathbb{R}^p)\), \(g\in L^2(\mathbb{R}^q)\) and \(0\leq r\leq p\wedge q\), the \(r\)-contraction
\begin{multline*}(f\otimes_r g)(x_1,\dots,x_{p-r},y_1,\dots,y_{q-r})= \\ \int_{\mathbb{R}^r}f(x_1,\dots,x_{p-r},s_1,\dots,s_r)\,g(y_1,\dots,y_{q-r},s_1,\dots,s_r)\,ds_1\cdots ds_r\end{multline*}
is an element of \(L^2(\mathbb{R}^{p+q-2r})\); we write \(f\tilde{\otimes}_r g:=\mathrm{Sym}(f\otimes_r g)\) for its symmetrisation and \(f^{\otimes n}(x_1,\dots,x_n)=f(x_1)\cdots f(x_n)\) for the \(n\)-th tensor power of \(f\). Multiple Wiener--It\^o integrals obey the product formula
\begin{equation}
\label{eq: product structure}
I_p(f)\,I_q(g)=\sum_{r=0}^{p\wedge q} r!\binom{p}{r}\binom{q}{r}\,I_{p+q-2r}(f\tilde{\otimes}_r g),
\end{equation}
which is the main computational tool in the chaos decomposition of \(C_{\varepsilon,\delta}\). Finally, \(I_n\) realises an isometric isomorphism between symmetric \(L^2(\mathbb{R}^n)\) and \(\mathcal{H}_n\) via
\[H_n(W(h))=I_n(h^{\otimes n}),\qquad \|h\|_H=1.\]

\subsection{Fractional Brownian Motion}
A fractional Brownian motion (fBM) with Hurst parameter \(H\in(0,1)\) is the centred continuous Gaussian process \((B^H_t)_{t\geq 0}\) with covariance
\[\mathbb{E}[B^H_t B^H_s]=\f{1}{2}(t^{2H}+s^{2H}-|t-s|^{2H}).\]
The Mandelbrot--Van Ness representation \cite{mandelbrotFractionalBrownianMotions1968} expresses it as a Wiener integral against a standard Brownian motion,
\[B^H_t=\tilde{c}_H^{-1}\int_{\mathbb{R}}K^H(t,s)\,dW_s,\]
with an explicit kernel \(K^H\) whose precise form we will not need. As a consequence, integration against \(B^H\) reduces to integration against \(W\): it is shown in \cite{pipirasIntegrationQuestionsRelated2000,pipirasAreClassesDeterministic2001} that there exists a linear operator \(T_H\), given by fractional integration when \(H>1/2\) and by fractional differentiation when \(H<1/2\), such that
\begin{equation}
\label{eq: fBM Wiener integral relation}
\int_{\mathbb{R}}f(s)\,dB^H_s=\int_{\mathbb{R}}T_H f(s)\,dW_s
\end{equation}
for every \(f\) with \(T_H f\in L^2(\mathbb{R})\), the identity holding in \(L^2(\Omega)\).

\subsection{Fast component: time-rescaled fractional Ornstein--Uhlenbeck process}
We take \(Y^{\varepsilon}_t\) to be the unique stationary solution to
\begin{equation}
\label{eq: fOU SDE}
dY_t=-\f{1}{\varepsilon}Y_t\,dt+\f{\sigma}{\varepsilon^H}dB^H_t,
\end{equation}
with \(\sigma^2=\f{2}{\Gamma(2H+1)}\) chosen so that \(\mathbb{E}[(Y^{\varepsilon}_t)^2]=1\). It admits the closed-form representation
\[Y^{\varepsilon}_t=\int_{\mathbb{R}}P_{t,\varepsilon}(s)\,dB^H_s,\qquad P_{t,\varepsilon}(s)=\f{\sigma}{\varepsilon^H}e^{-(t-s)/\varepsilon}\1_{(-\infty,t]}(s),\]
so \eqref{eq: fBM Wiener integral relation} identifies \(Y^{\varepsilon}_t\) as an element of the first Wiener chaos of \(W\):
\begin{equation}
\label{eq: fOU as Wiener integral}
Y^{\varepsilon}_t=p\int_{\mathbb{R}}K_{t,\varepsilon}(s)\,dW_s,\qquad K_{t,\varepsilon}:=T_H P_{t,\varepsilon}.
\end{equation}
Since \(P_{t,\varepsilon}(s)=P_{0,\varepsilon}(s-t)\) and \(T_H\) commutes with translations, the kernel \(K_{t,\varepsilon}\) is a time-shift of its value at \(t=0\): setting \(K^{\varepsilon}:=K_{0,\varepsilon}\) and writing \((\tau_t f)(u):=f(u-t)\) for the time-shift operator, we have \(K_{t,\varepsilon}=\tau_t K^{\varepsilon}\). This shorthand will be used throughout the chaos decomposition of \(C_{\varepsilon,\delta}\) as it will prove very useful when dealing with inner products. For \(H>1/2\), the kernel \(K^{\varepsilon}\) admits the explicit representation
\begin{equation}
\label{eq: fOU kernel representation}
K^{\varepsilon}(s) = \varepsilon^{-1/2}e^{-s/\varepsilon}\int_0^{s/\varepsilon}e^v\,v^{H-3/2}\,dv\qquad (s>0),
\end{equation}
and carries a normalisation constant \(p>0\) given by
\begin{equation}
\label{eq: p constant}
p^2=\f{1}{\beta(H-1/2,\,2H-2)\,\Gamma(2H-1)},
\end{equation}
which we record here because it will play an important role in the kernel computations of Section~\ref{sec: convergence inner product appendix}.

The covariance \(\rho(s):=\mathbb{E}[Y^1_0 Y^1_s]\), extended symmetrically by \(\rho(-s)=\rho(s)\), and its rescaled version \(\rho^{\varepsilon}(s):=\mathbb{E}[Y^{\varepsilon}_0 Y^{\varepsilon}_s]=\rho(s/\varepsilon)\), exhibit the long-range decay
\begin{equation}
    \label{eq: rho asymptotic}
\rho(s)\sim \sigma^2 H(2H-1)\,s^{2H-2}=:\kappa_H\,s^{2H-2}\qquad (s\to\infty),
\end{equation}
which follows from taking \(N=1\) in Theorem 2.3 of \cite{cheriditoFractionalOrnsteinUhlenbeckProcesses2003}. The leading constant \(\kappa_H\) is negative for \(H<1/2\) and equals \(1/\Gamma(2H-1)\) for \(H>1/2\); it appears as the leading prefactor of the chaos-asymptotic estimates of Section~\ref{sec: proofs of consistency}, where the relation \(\rho(s)^q\sim \kappa_H^q\,|s|^{q(2H-2)}\) for \(q\ge 1\) will be used repeatedly.

\subsection{Hermite Processes}
For \(H\in(1/2,1)\) and \(m\in\mathbb{N}\) the \emph{Hermite process of order \(m\) and self-similarity \(H\)} is the element of the \(m\)-th Wiener chaos of \(W\) defined by
\begin{equation}
\label{eq: gehringer hermite process}
Z_t^{H,m}=\frac{K(H,m)}{m!}\,I_m(f_t^{H,m}),\qquad f_t^{H,m}(x_1,\dots,x_m)=\int_0^t\prod_{j=1}^{m}(s-x_j)_{+}^{-\left(\f{1}{2}+\f{1-H}{m}\right)}\,ds,
\end{equation}
where the normalising constant is given explicitly by
\begin{equation}
\label{eq: hermite normalisation}
K(H,m)=\left(\frac{m!\,H(2H-1)}{\beta\!\left(\f{1}{2}+\f{H-1}{m},\,2\f{1-H}{m}\right)^{m}}\right)^{1/2},
\end{equation}
so that \(\mathbb{E}[(Z_1^{H,m})^2]=1\); see \cite{taqquConvergenceIntegratedProcesses1979,gehringerFunctionalLimitTheorems2022} for a derivation. The process \(Z^{H,m}\) is self-similar with parameter \(H\), has stationary increments, and covariance
\[\mathbb{E}[Z_t^{H,m}Z_s^{H,m}]=\f{1}{2}\bigl(t^{2H}+s^{2H}-|t-s|^{2H}\bigr).\]
For \(m=1\), \eqref{eq: gehringer hermite process} reduces to the Mandelbrot--Van Ness representation of a fBM; for \(m=2\) it is the Rosenblatt process, and for \(m\geq 2\) the process is non-Gaussian.

\subsection{Constants}
\label{subsec: constants}
Throughout the paper we use \(K\) for a \emph{generic} positive constant, depending only on \(H, m\) and the function \(G\), whose value is irrelevant and may change from line to line.

For each \(0\leq r\leq m-1\) we denote by \(D^{\mathrm{Diag}}_{H,m,r}\) and \(D^{\mathrm{Off}}_{H,m,r}\) the leading constants of \(E^{\mathrm{Diag}}_{2m-2r}\) and \(E^{\mathrm{Off}}_{2m-2r}\) respectively, computed explicitly in Lemmas~\ref{lem: asymptotic variance m=1 H>1/2}--\ref{lem: asymptotics m>1 H<1/2}. The single chaos-asymptotic prefactor we use by name elsewhere is the leading constant of the second chaos at \(r=m-1\) in the regime \(H^*(m)>1/2\):
\begin{equation}
    \label{eq: D star}
    D_{\star}(H,m):=\frac{4\,(1/\Gamma(2H-1))^{2m}}{(4H-3)(2H-1)\bigl((m-1)(2H-2)+2\bigr)^{2}\bigl((m-1)(2H-2)+1\bigr)^{2}},
\end{equation}
read off Lemma~\ref{lem: asymptotic variance m>1 H>1/2} (and Lemma~\ref{lem: asymptotic variance m=1 H>1/2} when \(m=1\)); it enters the proof of Theorem~\ref{thm: non central limit} (Rosenblatt convergence). All other prefactors in Sections~\ref{sec: consistency} and~\ref{sec: proofs of consistency} are kept generic.

Finally, the auxiliary integral
\begin{equation}
    \label{eq: a Hmr def}
    a(H,m,r):=\int_0^1 (1-x)\,x^{2(m-r)(2H-2)}\,dx,
\end{equation}
well-defined when \(2(m-r)(2H-2)>-1\), appears in the Riemann-sum asymptotics of the off-diagonal contributions.

\section{Sampling without scale separation}
\label{sec: failure modes}
The two results below highlight what goes wrong if the sampling scheme does not take the scale-separation parameter \(\varepsilon\) into account, and together motivate the rest of the article. Proposition~\ref{prop: C to 0} shows that if one samples on scales finer than \(\varepsilon\) --- that is, \(\delta/\varepsilon\to 0\) --- the quadratic-variation functional \(C_{\varepsilon,\delta}\) collapses to zero. Proposition~\ref{prop: Hhat to 1} shows that sampling arbitrarily fast at a fixed \(\varepsilon\) (the natural setting where one ignores the multiscale structure altogether) leads the Hurst-parameter estimator to \(1\), regardless of the true asymptotic Hurst exponent \(H^{*}(m)\). The common thread is that on scales smaller than \(\varepsilon\) the slow process is effectively smooth and carries no information about its limiting self-similarity exponent: any reasonable estimator must therefore probe it on scales larger than \(\varepsilon\).
\begin{proposition}\label{prop: C to 0}
Let $X^\epsilon$ be the solution of \eqref{eq: multiscale system} with \(0<H<1\), \(G\in L^2(\mathbb{R}, \gamma)\) with Hermite rank \(m\). Let $T=N \delta$ for any time horizon $T$. If $\delta(\varepsilon)>0$ is such that \(\delta(\varepsilon)\xrightarrow{\varepsilon\rightarrow 0} 0\) and \(\delta(\varepsilon)/\varepsilon\rightarrow 0\) then 
\[
\mathbb{E}\left[C_{\delta, \epsilon}\right] \xrightarrow{\epsilon \rightarrow 0^{+}} 0
\]
\end{proposition}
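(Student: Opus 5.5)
The plan is to compute \(\mathbb{E}[C_{\varepsilon,\delta}]\) exactly through stationarity of the increments, and then bound it with the elementary estimate \(|\rho^{\varepsilon}|\le 1\) on the tiny window \([0,\delta]\). Since \(Y^{\varepsilon}\) is stationary, so is \(G(Y^{\varepsilon})\), and the increments of \(X^{\varepsilon}\) are therefore stationary. This gives
\[
\mathbb{E}\Bigl[\sum_{i=0}^{N-1}\bigl(X^{\varepsilon}_{(i+1)\delta}-X^{\varepsilon}_{i\delta}\bigr)^2\Bigr]=N\,\alpha(\varepsilon,H^*(m))^2\int_0^\delta\!\!\int_0^\delta \mathbb{E}\bigl[G(Y^{\varepsilon}_u)G(Y^{\varepsilon}_v)\bigr]\,du\,dv .
\]
Expanding \(G=\sum_{n\ge m}c_nH_n\) and applying Proposition~\ref{prop: hermite orthogonality}, the covariance becomes \(\sum_{n\ge m}c_n^2 n!\,\rho^{\varepsilon}(u-v)^n\). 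This sum is bounded in absolute value by \(\|G\|^2_{L^2(\gamma)}\), because \(|\rho^{\varepsilon}|\le 1\). The double integral is therefore at most \(\|G\|^2_{L^2(\gamma)}\delta^2\), and since \(N=T/\delta\),
\[
\bigl|\mathbb{E}[\text{QV}]\bigr|\le T\,\|G\|^2_{L^2(\gamma)}\,\alpha(\varepsilon,H^*(m))^2\,\delta .
\]
This crude bound already captures the mechanism: on scales \(\delta\ll\varepsilon\) the path is \(C^1\), so the quadratic variation scales like \(\delta\) and not like \(\delta^{2H'}\).

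Next I check each regime of \(\alpha\).

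\textbf{Case \(H^*(m)<1/2\).} Here \(\alpha^2\delta=\delta/\varepsilon\to0\), which is exactly the hypothesis.

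\textbf{Case \(H^*(m)=1/2\).} Here \(\alpha^2\delta=\delta/(\varepsilon\log^2\varepsilon)\le\delta/\varepsilon\to0\).

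\textbf{Case \(H^*(m)>1/2\).} The renormalization contributes an extra factor \(N^{2H^*-1}=(T/\delta)^{2H^*-1}\), so the bound becomes
\[
T^{2H^*}\,\|G\|^2\,\varepsilon^{2H^*-2}\,\delta^{2-2H^*}=T^{2H^*}\,\|G\|^2\,(\delta/\varepsilon)^{2-2H^*}.
\]
This tends to \(0\) because \(2-2H^*>0\) (recall \(H^*<1\)). Taking \(T\) fixed then finishes the proof.

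The main obstacle is only bookkeeping. One must make sure the exchange of the Hermite sum with the integral is legitimate, which holds since \(\sum_n c_n^2 n!\) converges absolutely. One must also check that the exponent \(2-2H^*(m)\) is positive in the long-range regime. No finer asymptotics of \(\rho\) are needed, because the hypothesis \(\delta/\varepsilon\to0\) makes the trivial bound \(|\rho|\le1\) sufficient. If a sharper statement were desired, one could use \(\rho^{\varepsilon}(u-v)\to1\) uniformly on \([0,\delta]^2\), which gives the exact asymptotics \(\mathbb{E}[\text{QV}]\sim T\,\alpha^2\,\mathbb{E}[G(Y)]^2\)-type behaviour. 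Here \(\mathbb{E}[G(Y)^2]\) replaces \(\mathbb{E}[G(Y)]^2\), and the leading term is \(\|G\|^2\,\alpha^2\,T\delta\).
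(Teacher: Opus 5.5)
Your proof is correct and follows the same route as the paper. Both bound the double integral of the stationary covariance of \(G(Y^{\varepsilon})\) by \(\delta^2\,\mathbb{E}[G(Y_0)^2]\), multiply by \(N=T/\delta\) and the normalisation factor, and read off the rate \((\delta/\varepsilon)^{(2-2H^*(m))\wedge 1}\), with the logarithmic variant when \(H^*(m)=1/2\). The only cosmetic difference is that you get \(|\mathbb{E}[G(Y^{\varepsilon}_u)G(Y^{\varepsilon}_v)]|\le\|G\|^2_{L^2(\gamma)}\) from the Hermite expansion and \(|\rho|\le 1\), whereas Cauchy--Schwarz with stationarity gives it directly without any expansion.
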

\begin{proof}
We can again assume \(T=1\) for simplicity. We first derive the crude estimate
\[
\mathbb{E}[(X^{\epsilon}_{\delta} - X^{\epsilon}_0)^2] = \alpha(H^*(m),\varepsilon)^2\int_0^{\delta}\int_0^{\delta}\mathbb{E}[G(Y^{\epsilon}_s)G(Y^{\epsilon}_u)]dsdu\leq \alpha(H^*(m),\varepsilon)^2\delta^2 \mathbb{E}[G(Y_0)^2]
\]
from which, if \(H^*(m)\neq 1/2\)
\[
0\leq \mathbb{E}[C_{\delta, \varepsilon}]\leq \alpha(H^*(m),\varepsilon)^2\delta^{(2-2H^*(m))\wedge 1} \mathbb{E}[G(Y_0)^2] = \left(\frac{\delta}{\varepsilon}\right)^{(2-2H^*(m))\wedge 1}\mathbb{E}[G(Y_0)^2]\]
and if \(H^*(m) = 1/2\) then \[\mathbb{E}[C_{\delta, \varepsilon}] = \frac{\delta}{\varepsilon|\log(\varepsilon)|}\mathbb{E}[G(Y_0)^2] \]
and the result follows. 
\end{proof}

\begin{proposition}\label{prop: Hhat to 1}
Let $X^\varepsilon$ be the solution of \eqref{eq: multiscale system} and fix $T=N \delta$ for any time horizon $T$, $0<H<1$ and \(G\in L^p(\mathbb{R}, \gamma)\) for some \(p>2\)
such that \(\{ x\in\mathbb{R} : G(x)= 0\}\) has measure zero under \(\gamma\). Then for any \(\varepsilon>0\) fixed 
\[
\lim _{\delta \rightarrow 0} H^*_{\varepsilon, \delta}=1 
\]
in probability.
\end{proposition}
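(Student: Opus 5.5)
The plan is to show that, for fixed $\varepsilon$, the slow path $t\mapsto X^\varepsilon_t$ is absolutely continuous, so that its quadratic variation on a mesh $\delta$ behaves like $\delta$ times the energy of its derivative. The ratio in \eqref{eq: estimator of H} then tends to $1/2$, and
$$\hat H_{\varepsilon,\delta}\to \tfrac12-\tfrac{1}{2\log 2}\log\tfrac12=1.$$
(We read $H^*_{\varepsilon,\delta}$ in the statement as the estimator $\hat H_{\varepsilon,\delta}$ of \eqref{eq: estimator of H}.) We will in fact obtain almost sure convergence, which is stronger than convergence in probability.

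\textbf{Step 1: rewrite the quadratic variation as an averaging operator.} Fix $\varepsilon$, take $T=N\delta$, and set $g(s):=\alpha(\varepsilon,H^*(m))\,G(Y^\varepsilon_s)$. By Fubini,
$$\mathbb{E}\int_0^T g^2\,ds=T\alpha^2\,\mathbb{E}[G(Y_0)^2]<\infty,$$
so almost surely $g\in L^2([0,T])$. In addition $X^\varepsilon_{(i+1)\delta}-X^\varepsilon_{i\delta}=\int_{i\delta}^{(i+1)\delta}g$. Let $P_\delta$ denote the operator that replaces $g$ on each cell $[i\delta,(i+1)\delta)$ by its average over that cell. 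Then
$$\frac1\delta\sum_{i=0}^{N-1}\bigl(X^\varepsilon_{(i+1)\delta}-X^\varepsilon_{i\delta}\bigr)^2=\sum_i \delta\Bigl(\frac1\delta\int_{i\delta}^{(i+1)\delta}g\Bigr)^2=\|P_\delta g\|_{L^2[0,T]}^2 .$$
The same identity holds with $\delta/2$ in place of $\delta$, and the mesh $\delta/2$ still divides $T$.

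\textbf{Step 2: $P_\delta g\to g$ in $L^2[0,T]$ for every fixed $g\in L^2$.} This is the step I expect to be the main obstacle. Since $G$ is only assumed to lie in $L^p(\gamma)$, the composition $G(Y^\varepsilon_\cdot)$ need not be continuous, so no pointwise Riemann-sum argument is available. The argument is a pathwise density argument:

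1. By Jensen's inequality on each cell, $P_\delta$ is a contraction on $L^2[0,T]$.
2. For continuous $h$, uniform continuity gives $\|P_\delta h-h\|_{L^2}\to0$.
3. Given $\eta>0$, choose a continuous $h$ with $\|g-h\|_{L^2}<\eta$. Then
$$\|P_\delta g-g\|_{L^2}\le \|P_\delta(g-h)\|+\|P_\delta h-h\|+\|h-g\|\le 2\eta+o(1).$$

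Applied to each realisation, this gives
$$\delta^{-1}\,\mathrm{QV}_\delta\to\|g\|_{L^2}^2 \qquad\text{and}\qquad (\delta/2)^{-1}\,\mathrm{QV}_{\delta/2}\to\|g\|^2_{L^2}$$
almost surely. Consequently
$$\frac{\mathrm{QV}_{\delta/2}}{\mathrm{QV}_\delta}=\frac12\cdot\frac{\|P_{\delta/2}g\|^2}{\|P_\delta g\|^2}\longrightarrow\frac12,$$
provided the limit $\|g\|^2$ is nonzero.

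\textbf{Step 3: the limit is nonzero almost surely.} This is where the hypothesis on the zero set of $G$ is used. Since $\alpha>0$, the event $\{\|g\|_{L^2}=0\}$ is contained in the event that $\lambda(\{s\in[0,T]:G(Y^\varepsilon_s)=0\})=T$, where $\lambda$ is Lebesgue measure. By Fubini and stationarity,
$$\mathbb{E}\bigl[\lambda(\{s\le T:G(Y^\varepsilon_s)=0\})\bigr]=T\,\gamma(\{G=0\})=0 .$$
Hence $\|g\|>0$ almost surely, and $\|P_\delta g\|>0$ for all small $\delta$.

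\textbf{Conclusion.} By the continuity of $\log$, $\hat H_{\varepsilon,\delta}\to 1$ almost surely, and therefore in probability. Only $p\ge2$ is actually needed, so the hypothesis $p>2$ is more than sufficient.
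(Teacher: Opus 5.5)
Your proof is correct, and it takes a genuinely different route from the paper's.

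\textbf{The paper's route.} The paper works in $L^1(\Omega)$ in three steps.
\begin{enumerate}
\item It compares the rescaled quadratic variation $A(\varepsilon,\delta)$ with the sampled Riemann sum $\delta\sum_i G(Y^\varepsilon_{i\delta})^2$. This uses $L^2(\Omega)$-continuity of $t\mapsto G(Y_t)$, obtained via Vitali's theorem, which is where $p>2$ enters.
\item It shows that this Riemann sum converges in $L^1$ to $\int_0^T G(Y^\varepsilon_s)^2\,ds$. The argument is a density argument whose uniform-integrability step uses $\mathbb{E}[G(Y_0)^4]<\infty$; the stated hypothesis $p>2$ does not provide this.
\item It applies the continuous mapping theorem to obtain convergence in probability of the ratio.
\end{enumerate}

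\textbf{Your route.} You use the exact pathwise identity $\delta^{-1}\sum_i (X^\varepsilon_{(i+1)\delta}-X^\varepsilon_{i\delta})^2=\|P_\delta g\|^2_{L^2[0,T]}$. You combine it with the deterministic fact that the cell-averaging operators $P_\delta$ are contractions converging strongly to the identity on $L^2[0,T]$.

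\textbf{What your route buys.}
\begin{itemize}
\item It never evaluates $G(Y^\varepsilon)$ at the sampling points, so the intermediate Riemann sum disappears. That is precisely the step where the paper needs regularity approximations and extra integrability.
\item It yields almost sure convergence, not just convergence in probability, under only $G\in L^2(\gamma)$. Your remark that $p>2$ is superfluous is therefore right.
\item The domination $\|P_\delta g\|^2\le\|g\|^2$ also recovers the paper's $L^1(\Omega)$ convergence of $A(\varepsilon,\delta)$ as a by-product.
\item Your Fubini and stationarity argument shows that $\int_0^T G(Y^\varepsilon_s)^2\,ds>0$ almost surely. The paper only states that the zero-set hypothesis is needed for the continuous mapping theorem, without deriving this positivity.
\end{itemize}

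\textbf{What the paper's route buys.} It gives the explicit comparison between the quadratic variation and the sampled values $G(Y^\varepsilon_{i\delta})^2$. That comparison may be of independent interest, but it is not needed for this statement.
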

\begin{proof}
We assume without loss of generality that \(T=1\). The idea here is to re-scale the quantity of interest and define 
\[A(\varepsilon, \delta) = \frac{\sum_{i=0}^{N-1}(X^{\varepsilon}_{(i+1)\delta}-X^{\varepsilon}_{i\delta})^2}{\alpha(H^*(m),\varepsilon)^2\delta}\]
to then show that it converges to the constant \(\mathbb{E}(G(Y_0)^2)\) in two steps. In step 1 we show that 
\[
\left|A(\varepsilon,\delta) - \delta\sum_{i=0}^{N-1}G(Y_{i\delta/\varepsilon})^2\right|\xrightarrow{\varepsilon\rightarrow 0} 0 \quad \text{ in }L^1(\Omega, \mathbb{P})
\]
and then in step 2 we prove that 
\[\left|\delta\sum_{i=0}^{N-1}G(Y_{i\delta/\varepsilon})^2 - \int_0^1 G(Y_{s/\varepsilon})^2 ds\right|\xrightarrow{\varepsilon\rightarrow 0} 0 \quad \text{ in }L^1(\Omega, \mathbb{P})\]
and in a step 3 we conclude the result using these. 

\textbf{Step 1}. Note that
\[
A(\varepsilon, \delta) = \frac{\sum_{i=0}^{N-1}(X^{\varepsilon}_{(i+1)\delta}-X^{\varepsilon}_{i\delta})^2}{\alpha(H^*(m),\varepsilon)^2\delta} = 
\delta^{-1}\sum_{i=0}^{N-1}\int_{i\delta}^{(i+1)\delta}\int_{i\delta}^{(i+1)\delta}G(Y^{\varepsilon}_t)G(Y^{\varepsilon}_s)dsdt
\]
therefore,
\begin{align*}
A(\varepsilon, \delta) - \delta\sum_{i=0}^{N-1}G(Y_{i\delta}^{\varepsilon})^2 =& \delta^{-1}\sum_{i=0}^{N-1}\int_{i\delta}^{(i+1)\delta}\int_{i\delta}^{(i+1)\delta}(G(Y^{\varepsilon}_u)G(Y^{\varepsilon}_v)- G(Y^{\varepsilon}_{i\delta})^2)dudv \\
=& \delta \sum_{i=0}^{N-1}\frac{\int_{i\delta}^{(i+1)\delta}\int_{i\delta}^{(i+1)\delta}(G(Y^{\varepsilon}_u)G(Y^{\varepsilon}_v)- G(Y^{\varepsilon}_{i\delta})^2)dudv}{(\delta)^2}.
\end{align*}
Taking modulus and expectation on both sides we deduce that, by stationarity
\[
\mathbb{E}\left[\left|A(\varepsilon, \delta)-\delta^{-1}\sum_{i=0}^{N-1}G(Y^{\varepsilon}_{i\delta})^2\right| \right] \leq \frac{\int_{0}^{\delta/\varepsilon}\int_{0}^{\delta/\varepsilon}\mathbb{E}[|G(Y_u)G(Y_v) - G(Y_0)^2|]dudv}{(\delta/\varepsilon)^2}
\]
after applying Fubini on the right-hand side and using the fact that \(Y_t\) is stationary and so is \(G(Y_t)\) when \(G\) is measurable. 

If \(F(u,v) = \mathbb{E}[|G(Y_u)G(Y_v) - G(Y_0)^2|]\) is continuous at \((0,0)\) then we can conclude the result taking limits on both sides and using the fact that 
\[\lim_{t\rightarrow0}\frac{1}{t^2}\int_0^t\int_0^tF(u,v)dudv = F(0,0)\]

We thus only need to show that \[\lim_{(u,v)\rightarrow(0,0)}F(u,v) = 0\] for which we only need that \[G(Y_t)\xrightarrow{t\rightarrow 0}G(Y_0)\] in \(L^2(\Omega, \mathbb{P})\) since \begin{multline*}
\mathbb{E}[|G(Y_u)G(Y_v) - G(Y_0)^2|]\leq \\ ||G(Y_u)||_{L^2(\Omega)}||G(Y_v)-G(Y_0)||_{L^2(\Omega)}+||G(Y_0)||_{L^2(\Omega)}||G(Y_u)-G(Y_0)||_{L^2(\Omega)}.
\end{multline*} We first note that, since \(G\in L^2(\mathbb{R}, \gamma)\) then \(G(Y_t)\in L^2(\Omega, \mathbb{P})\) as \[\int_{\Omega}G(Y_t(\omega))^2d\mathbb{P} = \int_{\mathbb{R}}G(x)^2d\gamma < \infty.\]
We will proceed by a standard approximation argument, showing the result for a continuous function \(G\) and relying on continuous functions being dense on \(L^2(\mathbb{R}, \gamma)\). Let \(G\) be a continuous function, as \(Y_t\) has continuous paths almost surely we have pointwise convergence in \(\Omega\) in the sense of \[\lim_{t\rightarrow0}G(Y_t(\omega)) = G(Y_0(\omega))\] for almost all \(\omega\in\Omega\). Using stationarity and the integrability properties mentioned earlier it also holds that 
\[
\sup_{t\geq 0}\mathbb{E}[|G(Y_t)|^q] = \mathbb{E}[|G(Y_0)|^q]<\infty
\]
for all \(2< q \leq p\). This implies that the family \((G(Y_t)^2)_{t\geq 0}\) is uniformly integrable in \((\Omega, \mathcal{F}, \mathbb{P})\) and this immediately ensures convergence in \(L^2(\Omega, \mathbb{P})\) using Vitali's Theorem \cite{benedettoIntegrationModernAnalysis2009}. The argument to extend to not necessarily continuous functions \(G\) is very standard. 

\textbf{Step 2}. We now show that 
\begin{equation}
\delta\sum_{i=0}^{N-1}G(Y_{i\delta}^{\varepsilon})^2\xrightarrow[\delta\rightarrow 0]{L^1} \int_0^1 G(Y^{\varepsilon}_s)^2ds
\end{equation}
The result is almost straightforward for a continuous function \(G\) given that 
\[
\delta\sum_{i=0}^{N-1}G(Y^{\varepsilon}_{i\delta})^2 \xrightarrow[\delta\rightarrow 0]{\text{a.s.}} \int_0^1G(Y^{\varepsilon}_s)^2ds 
\]
via a Riemann sum argument. Now simply noting that 
\begin{equation}
\mathbb{E}\left[\left(\delta\sum_{i=0}^{N-1}G(Y^{\varepsilon}_{i\delta})^2\right)^2\right] = \delta^2\sum_{i,j}\mathbb{E}\left[G(Y^{\varepsilon}_{i\delta})^2G(Y^{\varepsilon}_{j\delta})^2\right]
\leq \mathbb{E}[(G(Y_0)^4)]<\infty
\end{equation}
we get that the sequence is Uniformly Integrable and thus concluding the convergence in \(L^1\). Now for a general \(G\in L^4\) we simply proceed again by density choosing a sequence of continuous functions \((G^n)_{n\in\mathbb{N}}\)
such that for any given \(\eta>0\) there exists \(n_{\eta}\) such that for any \(n\geq n_{\eta}\) it holds that \(||G_n^2 - G^2||_{L^1}\leq ||G_n^2 - G^2||_{L^2}<\eta\). Then 
\begin{multline}
\left|\delta\sum_{i=0}^{N-1}G(Y^{\varepsilon}_{i\delta})^2 - \int_0^1G(Y^{\varepsilon}_s)^2 ds\right|\leq \\
\delta\sum_{i=0}^{N-1}|G(Y_{i\delta}^{\varepsilon})^2 - G_n(Y_{i\delta}^{\varepsilon})^2|+|\delta\sum_{i=0}^{N-1}G_n(Y^{\varepsilon}_{i\delta})^2 - \int_0^1 G(Y^{\varepsilon}_s)^2ds|
\end{multline}
Taking now expectation on both sides and limits as \(\delta\rightarrow 0\) we get that 
\[
\lim_{\delta\rightarrow 0}\mathbb{E}\left|\delta\sum_{i=0}^{N-1}G(Y^{\varepsilon}_{i\delta})^2 - \int_0^1G(Y^{\varepsilon}_s)^2 ds\right| < \eta + \mathbb{E}\left[\left|\int_0^1 G_n(Y^{\varepsilon}_{s})^2 - G(Y^{\varepsilon}_s)^2ds\right|\right] < 2\eta
\]
from which the result follows as \(\eta\) is arbitrarily small. 

\textbf{Step 3}. The ratio inside the logarithm in \eqref{eq: estimator of H} may be written as 
\begin{align}
\frac{\sum_{i=0}^{2N-1}\left(X^{\varepsilon}_{(i+1)\delta/2}-X^{\varepsilon}_{i\delta/2}\right)^2}{\sum_{i=0}^{N-1}\left(X^{\varepsilon}_{(i+1)\delta}-X^{\varepsilon}_{i\delta}\right)^2} = \frac{\frac{\sum_{i=0}^{2N-1}\left(X^{\varepsilon}_{(i+1)\delta/2}-X^{\varepsilon}_{i\delta/2}\right)^2}{\alpha(H^*(m), \varepsilon)^2 \delta/2}\alpha(H^*(m), \varepsilon)^2 \delta/2}{\frac{\sum_{i=0}^{N-1}\left(X^{\varepsilon}_{(i+1)\delta}-X^{\varepsilon}_{i\delta}\right)^2}{\alpha(H^*(m), \varepsilon)^2 \delta}\alpha(H^*(m), \varepsilon)^2 \delta} = \frac{A(\varepsilon, \delta/2)}{A(\varepsilon,\delta)2}
\end{align}
and we know that \(A(\varepsilon,\delta)\) converges in probability to \(\int_0^1G(Y^{\varepsilon}_s)^2ds\) as \(\delta\rightarrow 0\) under the assumptions in the statement of the proposition. Using the Continuous Mapping Theorem for convergence in 
probability \cite{billingsleyConvergenceProbabilityMeasures1999} the ratio then converges to \(\frac{1}{2}\) and the result follows from applying the logarithm (once again under the umbrella of the Continuous Mapping Theorem for convergence in probability). Note that the assumption \(\mathbb{P}(G(Y_t)=0)=0\) is here necessary to apply the Continuous Mapping Theorem. 
\end{proof}
\section{Asymptotic unbiasedness under subsampling}
\label{sec: asymptotic unbiasedness}
The goal of this section is a first step towards the appropriateness of \(C_{\varepsilon, \delta}\) as an estimator: we prove that \(\mathbb{E}[C_{\varepsilon,\delta}]\rightarrow C^2\) as \(\delta/\varepsilon\rightarrow 0\). This is the least we may expect from an estimator and will be a building block for the consistency and distributional results of §§\ref{sec: consistency}--\ref{sec: asymptotic distribution}. The behaviour splits naturally into the two generic regimes \(H^*(m)<1/2\) and \(H^*(m)>1/2\), which we treat together in Theorem~\ref{thm: asymptotic unbiasedness}, and the borderline regime \(H^*(m)=1/2\), which requires an additional logarithmic condition on the subsampling rate and is collected at the end of the section in Theorem~\ref{thm: borderline}.
\begin{theorem}\label{thm: asymptotic unbiasedness}
Let $X^\varepsilon$ be the solution of \eqref{eq: multiscale system} with \(0<H<1\) and \(G\in L^2(\mathbb{R}, \gamma)\) with Hermite rank \(m\) such that \(H^*(m)\neq 1/2\). Let $T=N \delta$ for any time horizon $T$. If $\delta(\varepsilon)>0$ is such that \(\delta(\varepsilon)\xrightarrow{\varepsilon\rightarrow 0} 0\) and \(\varepsilon/\delta(\varepsilon)\rightarrow 0\) then
\[\mathbb{E}[C_{\varepsilon, \delta}]\rightarrow C^2,\]
where
\[C^2 = \mathbb{E}[X_1^2] = \begin{cases}
\displaystyle\sum_{q=m}^{\infty}c_q^2 q!\int_{\mathbb{R}}\rho(s)^q\,ds, & H^*(m)<1/2,\\[1em]
\displaystyle\frac{c_m^2 m!\,(\sigma^2H(2H-1))^m}{H^*(m)(2H^*(m)-1)}, & H^*(m)>1/2.
\end{cases}\]
Quantitatively,
\begin{equation}
    \label{eq: quantitative estimate of expectation}
\mathbb{E}[C_{\varepsilon,\delta}] = C^2 +
\begin{cases}
K\,(\varepsilon/\delta)^{1-(2H^*(m)\vee 0)}, & H^*(m)<1/2,\\[0.6em]
K_1(\varepsilon/\delta)^{2H^*(m)-1}+K_2(\varepsilon/\delta)^{2H^*(m)}, & H^*(m)>1/2,
\end{cases}
\end{equation}
for finite constants \(K, K_1, K_2\) depending only on \(H, m, G\).
\end{theorem}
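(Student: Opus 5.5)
By stationarity of \(Y^\varepsilon\) the increments of \(X^\varepsilon\) are stationary. Hence \(\mathbb{E}[C_{\varepsilon,\delta}]\) reduces to a single second moment: with \(T=1\), \(N=1/\delta\), it equals \(N^{2H^*(m)}\,\mathbb{E}[(X^\varepsilon_\delta-X^\varepsilon_0)^2]\) when \(H^*(m)>1/2\) and \(N\,\mathbb{E}[(X^\varepsilon_\delta-X^\varepsilon_0)^2]\) when \(H^*(m)<1/2\).

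Next expand \(G=\sum_{q\ge m}c_qH_q\) and use Proposition~\ref{prop: hermite orthogonality} together with \(\rho^\varepsilon(s)=\rho(s/\varepsilon)\). This gives
\[
\mathbb{E}[(X^\varepsilon_\delta-X^\varepsilon_0)^2]=\alpha^2\sum_{q\ge m}c_q^2q!\int_0^\delta\!\!\int_0^\delta\rho((u-v)/\varepsilon)^q\,du\,dv
=2\alpha^2\varepsilon^2\sum_{q\ge m}c_q^2q!\int_0^{\delta/\varepsilon}(\delta/\varepsilon-s)\rho(s)^q\,ds .
\]
Setting \(L=\delta/\varepsilon\to\infty\), the whole problem becomes the large-\(L\) asymptotics of \(\Phi_q(L):=2\int_0^L(L-s)\rho(s)^q\,ds\).

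\textbf{Case \(H^*(m)<1/2\).} Here \(\alpha^2\varepsilon^2\cdot N=\varepsilon/\delta=1/L\), so \(\mathbb{E}[C_{\varepsilon,\delta}]=\sum_q c_q^2q!\,\Phi_q(L)/L\). We have \(q(2H-2)\le m(2H-2)<-1\), so \(\rho^q\) is integrable for every \(q\ge m\), with a uniform bound because \(|\rho|\le1\) and \(|\rho|^q\le|\rho|^m\). Write
\[
\Phi_q(L)/L=\int_{-L}^{L}\rho^q-\frac{2}{L}\int_0^L s\rho(s)^q\,ds-2\int_L^\infty\rho^q .
\]
By \eqref{eq: rho asymptotic}, \(\int_L^\infty|\rho|^q\lesssim L^{q(2H-2)+1}\le L^{2H^*(m)-1}\). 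For the middle term, \(L^{-1}\int_0^L s|\rho|^q\) is \(O(L^{-1})\) if \(q(2H-2)<-2\), and \(O(L^{q(2H-2)+1})\) otherwise; in both cases it is bounded by \(L^{-(1-(2H^*(m)\vee0))}\) uniformly in \(q\ge m\). Summing against \(c_q^2q!\), which is summable because \(G\in L^2(\gamma)\), gives the error \(K(\varepsilon/\delta)^{1-(2H^*(m)\vee0)}\). The limit is \(\sum_q c_q^2q!\int_{\mathbb{R}}\rho^q\). The borderline case \(q(2H-2)=-2\) produces a logarithm; I would absorb it either by tightening the exponent or by noting that only the term \(q=m\) can attain \(2H^*(m)=0\) exactly, and treat that term separately.

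\textbf{Case \(H^*(m)>1/2\).} Now \(N^{2H^*}\alpha^2\varepsilon^2=\delta^{-2H^*}\varepsilon^{2H^*}=L^{-2H^*}\). For the term \(q=m\), write \(\rho(s)^m=\kappa_H^ms^{m(2H-2)}+r(s)\). The main part gives exactly
\[
L^{-2H^*}\cdot 2\kappa_H^m\int_0^L(L-s)s^{2H^*-2}\,ds=\frac{\kappa_H^m}{H^*(2H^*-1)},
\]
with \(\kappa_H=\sigma^2H(2H-1)\), which matches \(C^2\) after multiplying by \(c_m^2m!\). The remainder \(r\) is bounded near \(0\) and is \(O(s^{m(2H-2)-\eta})\) at infinity for some \(\eta>0\); I need the next-order term of the fOU covariance expansion from \cite{cheriditoFractionalOrnsteinUhlenbeckProcesses2003}, namely a correction of relative order \(s^{-2}\). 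Then \(\int_0^L(L-s)r\) is \(O(L)\) or \(O(L^{2H^*-2})\) plus a constant. The \(O(L)\) piece, after dividing by \(L^{2H^*}\), gives \(K_1L^{1-2H^*}\); the constant piece \(-2\int s\,r\) gives \(K_2L^{-2H^*}\).

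The terms \(q>m\) are handled by the same bound. Each has \(q(2H-2)<m(2H-2)\), so its contribution is at most \(L^{-2H^*}\max(L,L^{2-q(2-2H)})\lesssim L^{1-2H^*}\), and it again splits into an \(L\)-linear part and a constant part. Summability in \(q\) comes from \(|\rho|^q\le|\rho|^m\) and \(\sum c_q^2q!<\infty\).

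\textbf{Main obstacle.} The delicate step is this remainder control: obtaining precisely the two rates \(L^{1-2H^*}\) and \(L^{-2H^*}\) requires a second-order expansion of \(\rho\) rather than just \eqref{eq: rho asymptotic}. I would first try to extract it directly from the integral representation of the fOU covariance.
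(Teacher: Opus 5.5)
Your route is the paper's: stationarity reduces \(\mathbb{E}[C_{\varepsilon,\delta}]\) to one increment, and the Hermite expansion gives \(\mathbb{E}[C_{\varepsilon,\delta}]=L^{-\theta}\sum_q c_q^2q!\,\Phi_q(L)\) with \(\theta=1\) or \(2H^*(m)\). \(\Phi_q\) is then split as \(2L\int_0^L\rho^q-2\int_0^Ls\rho^q\).

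The case \(H^*(m)<1/2\) is fine, and tidier than the paper's because you make the \(q\)-uniformity explicit via \(|\rho|^q\le|\rho|^m\). Two small corrections there. The first term should be \(\int_{\mathbb{R}}\rho^q\), not \(\int_{-L}^{L}\rho^q\), otherwise the tail is counted twice. At \(H^*(m)=0\) the logarithm cannot be absorbed: the error really is of order \((\varepsilon/\delta)\log(\delta/\varepsilon)\), which the paper's bound also overlooks.

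For \(q=m\) with \(H^*(m)>1/2\), you correctly spot that the paper's ``\(\sim\dots+K\)'' silently requires a second-order expansion of \(\rho\). You need not derive it: Theorem 2.3 of \cite{cheriditoFractionalOrnsteinUhlenbeckProcesses2003} with \(N=1\) already gives \(\rho(s)=\kappa_Hs^{2H-2}+O(s^{2H-4})\). Your \(r\) is not bounded at \(0\), since \(s^{2H^*(m)-2}\) is singular there, but \(r\) and \(s\,r\) are integrable near \(0\), which is all you use.

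The genuine gap is the step for \(q>m\) when \(H^*(m)>1/2\). The inequality \(L^{-2H^*(m)}\max(L,L^{2-q(2-2H)})\lesssim L^{1-2H^*(m)}\) holds only when \(q(2H-2)\le-1\). If \(q(2H-2)>-1\), i.e. \(H^*(q)>1/2\), then \(\rho^q\) is not integrable. This happens for \(q=m+1\) as soon as \(H>1-\frac{1}{2(m+1)}\). In that case the computation you did for \(q=m\) gives
\[
c_q^2q!\,L^{-2H^*(m)}\Phi_q(L)\sim\frac{c_q^2q!\,\kappa_H^q}{H^*(q)(2H^*(q)-1)}\,L^{(q-m)(2H-2)},
\]
where \((q-m)(2H-2)-(1-2H^*(m))=q(2H-2)+1>0\). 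For example, \(m=1\), \(G=H_1+H_2\) and \(H\in(3/4,1)\) give \(\mathbb{E}[C_{\varepsilon,\delta}]-C^2\asymp(\varepsilon/\delta)^{2-2H}\), which is much larger than \((\varepsilon/\delta)^{2H-1}\).

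So the displayed two-term rate is false for general \(G\in L^2(\gamma)\), and no argument can produce it. The paper's proof shares this blind spot: it replaces \(R\) by its leading term \(c_m^2m!\,\kappa_H^ms^{2H^*(m)-2}\) and books every correction as a constant. Even for \(-2<q(2H-2)<-1\) the \(q\)-th term carries an intermediate power \(L^{(q-m)(2H-2)}\), so ``\(L\)-linear plus constant'' holds only up to \(o(L^{1-2H^*(m)})\).

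To salvage the qualitative part, dominate the tail \(\sum_{q>m}\) by \(|\rho|^{m+1}\) rather than \(|\rho|^m\); the latter only gives boundedness. Your argument then establishes:
\begin{itemize}
\item the limit \(\mathbb{E}[C_{\varepsilon,\delta}]\to C^2\), with error \(O\bigl((\varepsilon/\delta)^{(2H^*(m)-1)\wedge(2-2H)}\bigr)\), plus an extra logarithm when \(H^*(m+1)=1/2\);
\item the stated two-term expansion whenever \(c_q=0\) for all \(q>m\) with \(q(2H-2)\ge-1\), in particular for \(G=c_mH_m\), the finite-chaos setting in which the rate is used later.
\end{itemize}
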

\begin{proof}
The two cases share the common starting point
\[\mathbb{E}[(X^\varepsilon_\delta)^2] = \alpha(H^*(m),\varepsilon)^2\int_{-\delta/\varepsilon}^{\delta/\varepsilon}\left(\delta/\varepsilon-|s|\right)R(|s|)\,ds,\]
where \(R(s)=\mathbb{E}[G(Y_0)G(Y_s)]=\sum_{q\geq m}c_q^2 q!\rho(s)^q\); the regimes diverge in how the resulting tail integrals behave.

\medskip\noindent\textbf{Case \(H^*(m)<1/2\).}
We first prove the following estimate on the second moment of \(\frac{1}{\sqrt{\varepsilon}}\int_0^{\delta}G(Y^{\varepsilon}_t)dt\).
Note that the estimate is very similar to (4.12) in \cite{gehringerFunctionalLimitTheorems2022a} as it is essentially finding the gap in the inequality for the case \(p=2\). 

Let
\[R(s) = \mathbb{E}[G(Y_0)G(Y_s)] = \sum_{q=m}^{\infty}c_q^2q!\rho(s)^q\]
and note that on the one hand \[|R(s)|\leq R(0) = K\sum_{q=m}^{\infty}c_q^2q! = K ||G||_{L^2(\gamma)}^2<\infty.\] On the other hand, since \(\rho(s)^q\sim \kappa_H^q|s|^{(2H-2)q}\) when \(|s|\) is large, we also have that \(R(s)\sim \kappa_H^m|s|^{m(2H-2)}\) for large \(|s|\). Loosely speaking, \(R(s)\) behaves like a constant close to zero and decays at algebraic rate \(m(2H-2)\) away from zero. We now claim that 
\begin{equation}
\label{eq: order of second moment}
\mathbb{E}[(X^{\varepsilon}_{\delta}-X^{\varepsilon}_0)^2] = C^2\delta + R(\varepsilon,\delta)
\end{equation}
where the remainder is of order \(|R(\varepsilon,\delta)|\sim \varepsilon(\delta/\varepsilon)^{2H^*(m)\vee 0}\).
We note that the remainder is in fact negative (as it may seem as if \eqref{eq: order of second moment} contradicts (4.12) in \cite{gehringerFunctionalLimitTheorems2022a} otherwise) 
but we are just interested in the order. To prove that this is the case we first write it as follows 
\begin{align}
\mathbb{E}[(X^{\varepsilon}_{\delta}-X^{\varepsilon}_0)^2] & = \frac{1}{\varepsilon}\int_0^\delta\int_0^\delta\mathbb{E}[G(Y^{\varepsilon}_t)G(Y^{\varepsilon}_s)]dsdt \\
&= \varepsilon\int_0^{\delta/\varepsilon}\int_0^{\delta/\varepsilon}\mathbb{E}[G(Y_t)G(Y_s)]dsdt \\
&=\varepsilon\int_0^{\delta/\varepsilon}\int_0^{\delta/\varepsilon} R(|s-t|)dsdt \\
&=\varepsilon\int_{-\delta/\varepsilon}^{\delta/\varepsilon} (\delta/\varepsilon - |s|)R(|s|)ds\\
&= \delta\int_{-\delta/\varepsilon}^{\delta/\varepsilon} R(|s|)ds - \varepsilon\int_{-\delta/\varepsilon}^{\delta/\varepsilon} |s|R(|s|)ds
\end{align}
where the fourth equality follows from a change of variables taking advantage of the symmetry of the covariance function. This allows to split the second moment in three terms as follows 
\begin{equation}
\mathbb{E}[(X^{\varepsilon}_{\delta}-X^{\varepsilon}_0)^2] = \delta \int_{\mathbb{R}}R(s)ds - \varepsilon\int_{-\delta/\varepsilon}^{\delta/\varepsilon}|s|R(|s|)ds - \delta\int_{\mathbb{R}\setminus(-\frac{\delta}{\varepsilon},\frac{\delta}{\varepsilon})}R(s)
\end{equation}
We now prove that the second and third term vanish under appropriate subsampling and determine its order. Using the symmetry of the integrand the second term may be written as 
\begin{equation}
\varepsilon\int_{|y|<\delta/\varepsilon}|s|\sum_{q=m}^{\infty}c_q^2q!\rho(|s|)^q ds = 2\varepsilon\int_0^{\delta/\varepsilon}s\sum_{q=m}^{\infty}c_q^2q!\rho(s)^q ds 
\end{equation}
recall that \(\rho(s)\leq 1\wedge|s|^{2H-2}\) hence, assuming \(\delta/\varepsilon>1\) we can split the integral as 
\[
2\varepsilon + 2\varepsilon\int_1^{\delta/\varepsilon}s^{(2H-2)m+1}ds = 2\varepsilon + 2\varepsilon\int_1^{\delta/\varepsilon}s^{2H^*(m)-1}ds
\]
now note that when \(H^*(m)<0\), \(s^{2H^*(m)-1}\) is integrable on the half axis hence the integral in the second term becomes constant and the order is just \(\varepsilon\). When this is not the case we get 
\[
\int_1^{\delta/\varepsilon}s^{2H^*(m)-1}ds = \frac{1}{2H^*(m)}\left((\delta/\varepsilon)^{2H^*(m)} -1\right)
\]
and therefore the order of the second term is \(\varepsilon(\delta/\varepsilon)^{2H^*(m)\vee 0}\)
For the third term we simply use that the first term in the sum dominates and hence determines the order and 
\[
2\delta \int_{\delta/\varepsilon}^{\infty}s^{(2H-2)m} = 2\delta \int_{\delta/\varepsilon}^{\infty}s^{2H^*(m)-2} = \frac{2\delta}{2H^*(m)-1}(\delta/\varepsilon)^{2H^*(m)-1} \propto \varepsilon(\delta/\varepsilon)^{2H^*(m)}
\]
Finally, taking expectation and accounting for the extra term \(\delta^{-1}\) after taking sums we get 
\[\mathbb{E}[C_{\varepsilon,\delta}] = C^2 + (\varepsilon/\delta)^{1-2H^*(m)\vee 0}\]
and hence 
\[
\mathbb{E}[C_{\varepsilon,\delta}]\rightarrow C^2
\]
whenever \(\delta/\varepsilon\rightarrow 0\).

\medskip\noindent\textbf{Case \(H^*(m)>1/2\).}
In this case \(X^{\varepsilon}_t = \varepsilon^{H^*(m)-1}\int_0^tG(Y^{\varepsilon}_s)ds = \varepsilon^{H^*(m)}\int_0^{t/\varepsilon}G(Y_s)ds\).
Let again \(R(s) = \mathbb{E}[G(Y_s)G(Y_0)]\) we have that 
\begin{align}
\label{eq: split H>1/2}
\mathbb{E}[(X^{\varepsilon}_\delta)^2]&=\varepsilon^{2H^*(m)}\int_0^{\delta/\varepsilon}\int_0^{\delta/\varepsilon}R(|s-t|)dsdt \nonumber\\
&= \varepsilon^{2H^*(m)}\int_{-\delta/\varepsilon}^{\delta/\varepsilon}(\delta/\varepsilon - |s|)R(|s|)ds \nonumber\\
&= 2\varepsilon^{2H^*(m)-1}\delta\int_0^{\delta/\varepsilon}R(s)ds - 2\varepsilon^{2H^*(m)}\int_0^{\delta/\varepsilon}sR(s)
\end{align}
Recall that \(R(s) = \sum_{q=m}^{\infty}(c_m)^2m!\rho(s)^{q}\) where \(\rho(s)\sim\sigma^2H(2H-1)|s|^{2H-2}+o(|s|^{2H-4})\) for large \(|s|\) and \(\rho(|s|)\leq 1\). Therefore \(R(s)\sim (c_m)^2m!(\sigma^2H(2H-1)|s|^{2H-2})^m\) and
\begin{align*}
    \int_0^{\delta/\varepsilon}R(s)ds &= \int_0^1R(s)ds + \int_1^{\delta/\varepsilon}R(s)ds \\
    &\sim \frac{c_m^2m!(\sigma^2H(2H-1))^m}{2H^*(m)-1}(\delta/\varepsilon)^{2H^*(m)-1} + K
\end{align*}
where \(K\) is a finite constant. Going back to the first integral in \eqref{eq: split H>1/2} we have that 
\begin{equation*}
2\varepsilon^{2H^*(m)-1}\delta\int_0^{\delta/\varepsilon}R(s)ds \sim \delta^{2H^*(m)}\frac{2c_m^2m!(\sigma^2H(2H-1))^m}{2H^*(m)-1} + \varepsilon^{2H^*(m)-1}\delta K
\end{equation*}
Going now to the second term in \eqref{eq: split H>1/2} following a similar strategy we get 
\begin{align*}
2\varepsilon^{2H^*(m)}\int_0^{\delta/\varepsilon}sR(s)ds &\sim 2\int_0^{s_0}sds + 2c_m^2m!(\sigma^2H(2H-1))^m\int_{s_0}^{\delta/\varepsilon}s^{(2H-2)m+1}ds\\
&=\frac{2c_m^2m!(\sigma^2H(2H-1))^m}{2H^*(m)}\delta^{2H^*(m)} + \varepsilon^{2H^*(m)}K
\end{align*}
Finally, we just need to note that 
\begin{align*}
\mathbb{E}[C_{\varepsilon,\delta}] &= \delta^{-2H^*(m)}\mathbb{E}[(X^{\varepsilon}_{\delta})^2]\\
&= 
\begin{multlined}[t]\frac{2c_m^2m!(\sigma^2H(2H-1))^m}{2H^*(m)-1} - \frac{2c_m^2m!(\sigma^2H(2H-1))^m}{2H^*(m)} + (\varepsilon/\delta)^{2H^*(m)-1}K_1 \\
     + (\varepsilon/\delta)^{2H^*(m)}K_2 \end{multlined}\\
&= \frac{c_m^2m!(\sigma^2H(2H-1))^m}{H^*(m)(2H^*(m)-1)} + (\varepsilon/\delta)^{2H^*(m)-1}K_1 + (\varepsilon/\delta)^{2H^*(m)}K_2\\
&= C + (\varepsilon/\delta)^{2H^*(m)-1}K_1 + (\varepsilon/\delta)^{2H^*(m)}K_2
\end{align*}
which concludes that, when \(\varepsilon/\delta\rightarrow 0\), \(\mathbb{E}[C_{\varepsilon,\delta}]\rightarrow C\). 
\end{proof}
The borderline case \(H^*(m)=1/2\) requires a logarithmic restriction on the subsampling rate; we record it here for completeness, although it does not feature in the consistency or distributional analysis that follows.

\begin{theorem}\label{thm: borderline}
Let $X^\varepsilon$ be the solution of \eqref{eq: multiscale system} with \(0<H<1\), \(G\in L^2(\mathbb{R}, \gamma)\) with Hermite rank \(m\) such that \(H^*(m)=1/2\). Let $T=N \delta$ for any time horizon $T$. If $\delta(\varepsilon)>0$ is such that \(\delta(\varepsilon)\xrightarrow{\varepsilon\rightarrow 0} 0\), \(\varepsilon/\delta(\varepsilon)\rightarrow 0\) and
\(\log(\delta)/\log(\varepsilon)\rightarrow 0\) then \[\mathbb{E}[C_{\varepsilon,\delta}]\rightarrow C\] where \[C^2 = \mathbb{E}[X_1^2] = \frac{c_m^2m!(\sigma^2H(2H-1))^m}{H^*(m)(2H^*(m)-1)}.\] 
In particular, 
\begin{equation}
    \label{eq: quantitative estimate of expectation H=1/2}
\mathbb{E}[C_{\varepsilon,\delta}] = C^2\left|\frac{\log(\delta/\varepsilon)}{\log(\varepsilon)}\right| + K_1\frac{1}{|\log(\varepsilon)|} + K_2 \frac{\varepsilon/\delta}{|\log(\varepsilon)|}
\end{equation}
for some finite constants \(K_1, K_2\).
\end{theorem}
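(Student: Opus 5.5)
We will follow the same route as in the proof of Theorem~\ref{thm: asymptotic unbiasedness}. By stationarity, \(\mathbb{E}[C_{\varepsilon,\delta}]=N\,\mathbb{E}[(X^\varepsilon_\delta)^2]=\delta^{-1}\mathbb{E}[(X^\varepsilon_\delta)^2]\), taking \(T=1\). We then use the common starting point
\[
\mathbb{E}[(X^\varepsilon_\delta)^2]=\frac{1}{\varepsilon|\log\varepsilon|^2}\,\varepsilon^2\int_{-\delta/\varepsilon}^{\delta/\varepsilon}\bigl(\delta/\varepsilon-|s|\bigr)R(|s|)\,ds ,
\]
with \(R(s)=\sum_{q\ge m}c_q^2q!\rho(s)^q\). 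Substituting \(\alpha=(\sqrt{\varepsilon}|\log\varepsilon|)^{-1}\), this gives
\[
\mathbb{E}[C_{\varepsilon,\delta}]=\frac{2}{|\log\varepsilon|^2}\int_0^{\delta/\varepsilon}R(s)\,ds-\frac{2\varepsilon}{\delta|\log\varepsilon|^2}\int_0^{\delta/\varepsilon}sR(s)\,ds .
\]
The whole argument then reduces to the asymptotics of these two integrals in the critical case \(m(2H-2)=2H^*(m)-2=-1\), where \(R(s)\sim c_m^2m!\kappa_H^m s^{-1}\).

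For the first integral we split at \(s=1\). We have \(\int_0^1 R\le R(0)\). On \([1,\delta/\varepsilon]\) we write \(R(s)=c_m^2m!\kappa_H^m s^{-1}+r(s)\). The remainder \(r\) collects the higher chaoses, which decay like \(s^{q(2H-2)}\) with \(q(2H-2)<-1\) and are therefore integrable. It also collects the correction in \(\rho(s)=\kappa_H s^{2H-2}+O(s^{2H-4})\), which is likewise integrable. This yields
\[
\int_0^{\delta/\varepsilon}R=c_m^2m!\kappa_H^m\log(\delta/\varepsilon)+O(1).
\]
For the second integral, the same split gives \(\int_0^{\delta/\varepsilon}sR(s)\,ds=c_m^2m!\kappa_H^m(\delta/\varepsilon)+O(1)+o(\delta/\varepsilon)\). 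Multiplying by \(\varepsilon/(\delta|\log\varepsilon|^2)\) produces a constant over \(|\log\varepsilon|^2\) plus a term of order \((\varepsilon/\delta)/|\log\varepsilon|^2\).

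Collecting the pieces gives \(\mathbb{E}[C_{\varepsilon,\delta}]\) as a leading multiple of \(\log(\delta/\varepsilon)/|\log\varepsilon|^2\) plus \(O(1/|\log\varepsilon|^2)\) and \(O((\varepsilon/\delta)/|\log\varepsilon|^2)\) corrections. These corrections are at least as good as the claimed \(K_1/|\log\varepsilon|\) and \(K_2(\varepsilon/\delta)/|\log\varepsilon|\) terms. The main obstacle is the normalisation and the identification of the constant in \eqref{eq: quantitative estimate of expectation H=1/2}. With the literal \(\alpha\) and the \(\delta^{-1}\) factor, the leading term is \(2c_m^2m!\kappa_H^m\,\log(\delta/\varepsilon)/|\log\varepsilon|^2\) rather than \(C^2|\log(\delta/\varepsilon)/\log\varepsilon|\), since the powers of the logarithm do not match. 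The formula for \(C^2\) also degenerates at \(H^*=1/2\). I would therefore first fix the convention by matching against the known limit from \cite{gehringerFunctionalLimitTheorems2022a}. There the variance of the limiting Wiener process equals \(\lim 2|\log\varepsilon|^{-1}\int_0^{1/\varepsilon}R\), that is \(2c_m^2m!\kappa_H^m\).

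The statement should then be read with the effective normalisation in which the leading term is \(C^2\log(\delta/\varepsilon)/|\log\varepsilon|\), where \(C^2=2c_m^2m!\kappa_H^m\). Under that reading, the hypothesis \(\log\delta/\log\varepsilon\to0\) gives
\[
\frac{\log(\delta/\varepsilon)}{|\log\varepsilon|}=1-\frac{\log\delta}{\log\varepsilon}\to 1 ,
\]
while \(\varepsilon/\delta\to0\) kills the remaining corrections, so \(\mathbb{E}[C_{\varepsilon,\delta}]\to C^2\). The same condition is exactly what makes the leading ratio \(|\log(\delta/\varepsilon)/\log\varepsilon|\) tend to one, which is why it appears in the hypotheses.
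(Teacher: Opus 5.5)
Your proposal is correct and takes essentially the same route as the paper. Both reduce \(\mathbb{E}[C_{\varepsilon,\delta}]\) to \(\int_0^{\delta/\varepsilon}R\) and \(\int_0^{\delta/\varepsilon}sR\), with the \(s^{-1}\) tail of \(R\) producing the logarithm and the linear term respectively. Your diagnosis of the normalisation is also confirmed by the paper's own proof: it silently uses \(\alpha=1/\sqrt{\varepsilon|\log\varepsilon|}\) rather than the \(1/(\sqrt{\varepsilon}|\log\varepsilon|)\) of \eqref{eq: scaling factor}, and the leading constant it obtains is \(2c_m^2m!(\sigma^2H(2H-1))^m=2c_m^2m!\kappa_H^m\), which is exactly your reading of \(C^2\) rather than the degenerate formula in the statement.
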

\begin{proof}
In this case \[X^{\varepsilon}_t = \frac{1}{\sqrt{\varepsilon|\log(\varepsilon)|}}\int_0^tG(Y^{\varepsilon}_s)ds = \sqrt{\frac{\varepsilon}{|\log(\varepsilon)|}}\int_0^{t/\varepsilon}G(Y_s)ds.\]
Let again \(R(s) = \mathbb{E}[G(Y_s)G(Y_0)]\) and through the same calculation we get that 
\begin{align}
\label{eq: split H=1/2}
\mathbb{E}[(X^{\varepsilon}_\delta)^2]= 2\frac{1}{|\log(\varepsilon)|}\delta\int_0^{\delta/\varepsilon}R(s)ds - 2\frac{\varepsilon}{|\log(\varepsilon)|}\int_0^{\delta/\varepsilon}sR(s)
\end{align}
The particularity of this case is that \[R(s)\sim (c_m)^2m!(\sigma^2H(2H-1)|s|^{2H-2})^m = (c_m)^2m!(\sigma^2H(2H-1))^m|s|^{-1}\] hence the order of the integrals changes
\begin{align*}
    \int_0^{\delta/\varepsilon}R(s)ds &= \int_0^1R(s)ds + \int_1^{\delta/\varepsilon}R(s)ds \\
    &\sim c_m^2m!(\sigma^2H(2H-1))^m |\log(\delta/\varepsilon)| + K
\end{align*}
where \(K\) is a finite constant. Going back to the first integral in \eqref{eq: split H=1/2} we have that 
\begin{equation*}
2\frac{1}{|\log(\varepsilon)|}\delta\int_0^{\delta/\varepsilon}R(s)ds \sim 2c_m^2m!(\sigma^2H(2H-1))^m \frac{|\log(\delta/\varepsilon)|}{|\log(\varepsilon)|}\delta + \frac{1}{|\log(\varepsilon)|}\delta K
\end{equation*}
Going now to the second term in \eqref{eq: split H=1/2} following a similar strategy we get 
\begin{align*}
2\frac{\varepsilon}{|\log(\varepsilon)|}\int_0^{\delta/\varepsilon}sR(s)ds &\sim 2\int_0^{s_0}sds + 2c_m^2m!(\sigma^2H(2H-1))^m\int_{s_0}^{\delta/\varepsilon}1ds\\
&=2c^2_m m! (\sigma^2H(2H-1))^m \frac{\delta}{\log(\varepsilon)} + \frac{\varepsilon}{|\log(\varepsilon)|}K
\end{align*}
Finally, we just need to note that 
\begin{align*}
\mathbb{E}[C_{\varepsilon,\delta}] &= \delta^{-1}\mathbb{E}[(X^{\varepsilon}_{\delta})^2]\\
&\begin{multlined}
\sim 2c_m^2m!(\sigma^2H(2H-1))^m \frac{|\log(\delta/\varepsilon)|}{|\log(\varepsilon)|} + \frac{1}{|\log(\varepsilon)|} K \\ - 2c^2_m m! (\sigma^2H(2H-1))^m \frac{1}{|\log(\varepsilon)|} - \frac{\varepsilon/\delta}{|\log(\varepsilon)|}K
\end{multlined}\\
&= 2c_m^2m!(\sigma^2H(2H-1))^m \frac{|\log(\delta/\varepsilon)|}{|\log(\varepsilon)|} + K_1\frac{1}{|\log(\varepsilon)|} + K_2\frac{\varepsilon/\delta}{|\log(\varepsilon)|}
\end{align*}
which concludes that, when \(\varepsilon/\delta\rightarrow 0\), \(\log(\delta)/\log(\varepsilon)\rightarrow 0\), then \(\mathbb{E}[C_{\varepsilon,\delta}]\rightarrow C\). 
\end{proof}

\begin{remark}
Note that the standard choice of subsampling rate is \(\delta = \varepsilon^{\alpha}\) for some \(\alpha\in(0,1)\). This choice is not valid for this case and would in fact introduce bias by a factor of \(1-\alpha\). The prototypical choice 
of a subsampling rate in this case would be \(\delta = \exp(-|\log(\varepsilon)|^{\alpha})\) for some \(\alpha\in(0,1)\). 
\end{remark}
This case is not particularly interesting from the inferential point of view. On the one hand, it is very hard (if not impossible) to diagnose whether or not we are in the borderline case purely from a discrete set of observations. This is because 
the parameter \(H^*(m)\) is only "observable" when taking values in \((1/2,1)\). For this reason we exclude this case from the rest of the paper. 
\section{Consistency on a finite Wiener Chaos}
\label{sec: consistency}
\label{sec: consistency}
Now that we have found an observational regime for which the estimator is asymptotically unbiased we want to strengthen this convergence to some mode of consistency. This is much more desirable for our purposes since the setting we want to work in 
is that of only one string of data being observable at a time. 

For this, we will take great advantage of the \(L^2\) structure the estimator has. In particular, in the simple case that \(G(x)=c_mH_m(x)\) the
estimator \(C_{\varepsilon, \delta}\) admits an explicit decomposition into orthogonal Wiener chaoses which allows to determine the asymptotic behaviour of its variance and hence to conclude consistency in \(L^2\).

\subsection*{Chaos decomposition of the estimator}

\begin{proposition}\label{prop: chaos decomposition}
The estimator \(C_{\varepsilon, \delta}\) admits the following decomposition into orthogonal Wiener chaoses
\begin{equation}
    \label{eq: chaos decomposition finite}
C_{\varepsilon, \delta} = c_m^2T_0 + \sum_{r=0}^{m-1} \left(\frac{c_m m!}{(m-r)!\sqrt{r!}}\right)^2T_{2m-2r}
\end{equation}
where the index \(2m-2r\) records the order of the Wiener chaos to which each term \(T_{2m-2r}\) belongs, and
\begin{multline}
T_{2m-2r} = \\  N^{2H'-1}\alpha(\varepsilon,H^*(m))^2I_{2m-2r}\left(\sum_{k=0}^{N-1}\int_{k\delta}^{(k+1)\delta}(\tau_s K^{\varepsilon})^{\otimes m}ds\otimes_r \int_{k\delta}^{(k+1)\delta}(\tau_s K^{\varepsilon})^{\otimes m}ds\right),
\end{multline}
and
\begin{equation}
c_m^2T_0 = \mathbb{E}[C_{\varepsilon, \delta}].
\end{equation}
The combinatorial weights \(\bigl(\tfrac{c_m m!}{(m-r)!\sqrt{r!}}\bigr)^2\) are the normalisation constants from the product formula for multiple Wiener--Itô integrals.
\end{proposition}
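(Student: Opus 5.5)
We take \(G=c_mH_m\) and write the increments of \(X^\varepsilon\) as \(m\)-th chaos elements, then square them with the product formula \eqref{eq: product structure}.

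\textbf{Increments as multiple integrals.} By \eqref{eq: fOU as Wiener integral}, \(Y^\varepsilon_s = p\,W(\tau_sK^\varepsilon)\), and stationarity gives \(\|p\,\tau_sK^\varepsilon\|_H=1\). The identity \(H_m(W(h))=I_m(h^{\otimes m})\) for unit \(h\) therefore yields
\[
H_m(Y^\varepsilon_s)=I_m\bigl((p\,\tau_sK^\varepsilon)^{\otimes m}\bigr).
\]
Linearity of \(I_m\), together with a stochastic Fubini argument (or a Riemann-sum approximation, closed by the \(L^2\) isometry), then gives
\[
X^\varepsilon_{(k+1)\delta}-X^\varepsilon_{k\delta}=\alpha(\varepsilon,H^*(m))\,c_m\,I_m(f_k),\qquad f_k:=\int_{k\delta}^{(k+1)\delta}(\tau_sK^\varepsilon)^{\otimes m}\,ds,
\]
where the constant \(p^m\) is absorbed into the kernel normalisation. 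To justify the Bochner integral defining \(f_k\) in \(L^2(\mathbb{R}^m)\), note that \(s\mapsto\tau_sK^\varepsilon\) is continuous in \(L^2\) because translations are strongly continuous. Each \(f_k\) is already symmetric, being an integral of tensor powers.

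\textbf{Squaring and summing.} Apply \eqref{eq: product structure} with \(p=q=m\):
\[
I_m(f_k)^2=\sum_{r=0}^{m} r!\binom{m}{r}^2\, I_{2m-2r}(f_k\tilde\otimes_r f_k).
\]
The \(r=m\) term is the deterministic constant \(m!\,\|f_k\|^2\). Summed over \(k\) and multiplied by \(N^{2H'-1}\alpha^2c_m^2\), it equals \(\mathbb{E}[C_{\varepsilon,\delta}]\), because every other term has mean zero. This identifies \(c_m^2T_0\). For \(r<m\), linearity of \(I_{2m-2r}\) lets us move \(\sum_k\) inside the integral, and \(I_n(\mathrm{Sym}\,g)=I_n(g)\) lets us drop the symmetrisation, which produces \(T_{2m-2r}\). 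The coefficient is
\[
c_m^2\,r!\binom{m}{r}^2=c_m^2\frac{(m!)^2}{r!\,((m-r)!)^2}=\Bigl(\frac{c_m\,m!}{(m-r)!\sqrt{r!}}\Bigr)^2,
\]
as claimed. Orthogonality of the terms follows from the orthogonality of distinct chaoses \(\mathcal{H}_{2m-2r}\).

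\textbf{Main obstacle.} The delicate step is the first one, namely exchanging the time integral with \(I_m\). I will handle it by approximating \(\int_{k\delta}^{(k+1)\delta}H_m(Y^\varepsilon_s)\,ds\) with Riemann sums. These converge in \(L^2(\Omega)\) because \(s\mapsto H_m(Y^\varepsilon_s)\) is \(L^2\)-continuous, and the corresponding kernel sums converge in \(L^2(\mathbb{R}^m)\) to \(f_k\). By the isometry the two limits coincide. The remaining concern is the bookkeeping of \(p\) and of the normalisation \(N^{2H'-1}\), which just rescales every term uniformly.
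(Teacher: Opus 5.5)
Your proposal is correct and follows the paper's proof: you write \(H_m(Y^\varepsilon_s)\) as \(I_m\bigl((\tau_sK^\varepsilon)^{\otimes m}\bigr)\), move the time integral inside \(I_m\), expand the square with the product formula, and identify the \(r=m\) term with the mean. Your absorption of \(p^m\) matches the paper's implicit convention \(\langle\tau_uK^\varepsilon,\tau_vK^\varepsilon\rangle=\rho^\varepsilon(u-v)\). The only differences are minor: you justify the interchange with a self-contained Riemann-sum argument and the \(L^2\) isometry where the paper cites the stochastic Fubini theorem of Pipiras--Taqqu, and you explicitly check the coefficient identity \(r!\binom{m}{r}^2=(m!)^2/(r!((m-r)!)^2)\) and the identification of \(c_m^2T_0\), which the paper leaves implicit.
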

\begin{proof}
The decomposition relies on the product formula for multiple Wiener--Itô integrals \eqref{eq: product structure} together with a stochastic Fubini theorem (see \cite[Theorem 2.1]{pipirasRegularizationIntegralRepresentations2010}) which allows us to interchange the deterministic time integral with the multiple Wiener--Itô integral. We start by rewriting \(C_{\varepsilon, \delta}\) as
\begin{align}
C_{\varepsilon, \delta} &= N^{2H'-1}\alpha(\varepsilon,H^*(m))^2\sum_{k=0}^{N-1}\left(\int_{k\delta}^{(k+1)\delta}c_mH_m(Y^{\varepsilon}_s)ds\right)^2\\
&= N^{2H'-1}\alpha(\varepsilon,H^*(m))^2\sum_{k=0}^{N-1}\left(\int_{k\delta}^{(k+1)\delta}c_mI_m((\tau_s K^{\varepsilon})^{\otimes m})ds\right)^2\\
&= N^{2H'-1}\alpha(\varepsilon,H^*(m))^2c_m^2\sum_{k=0}^{N-1}I_m\left(\int_{k\delta}^{(k+1)\delta}(\tau_s K^{\varepsilon})^{\otimes m}ds\right)^2\\
&= 
\begin{multlined}[t]
N^{2H'-1}\alpha(\varepsilon,H^*(m))^2c_m^2\\
\sum_{r=0}^{m}r!\binom{m}{r}^2I_{2m-2r}\left(\sum_{k=0}^{N-1}\int_{k\delta}^{(k+1)\delta}(\tau_s K^{\varepsilon})^{\otimes m}ds\otimes_r \int_{k\delta}^{(k+1)\delta}(\tau_s K^{\varepsilon})^{\otimes m}ds\right)
\end{multlined}
\label{eq: chaos decomposition before change}
\end{align}
\end{proof}
We will denote the kernel of the multiple Wiener-Itô integral above as 
\begin{equation}
g_{k}^{m,r,\varepsilon} = \int_{k\delta}^{(k+1)\delta}(\tau_s K^{\varepsilon})^{\otimes m}ds\otimes_r \int_{k\delta}^{(k+1)\delta}(\tau_s K^{\varepsilon})^{\otimes m}ds
\end{equation}
\begin{corollary}[Variance decomposition]\label{cor: variance decomposition}
The variance of the estimator \(C_{\varepsilon, \delta}\) may be expressed as
\begin{equation}
\mathbb{E}[(C_{\varepsilon, \delta}-\mathbb{E}[C_{\varepsilon, \delta}])^2] = \sum_{r=0}^{m-1} \left(\frac{c_m m!}{(m-r)!\sqrt{r!}}\right)^4\mathbb{E}[T_{2m-2r}^2]
\end{equation}
and the second moment of the chaos terms may be bounded as
\begin{equation}
    \label{eq: second moment of chaos term}
\mathbb{E}[T_{2m-2r}^2] \leq (2m-2r)!N^{2(2H'-1)}\alpha(\varepsilon,H^*(m))^4(E_{2m-2r}^{\text{Diag}} + E_{2m-2r}^{\text{Off}})
\end{equation} 
with the inequality being an equality if \(r=m-1\) due to the kernels being symmetric. The terms \(E_{2m-2r}^{\text{Diag}}\) and \(E_{2m-2r}^{\text{Off}}\)
may be expressed explicitly as
\begin{align}
E_{2m-2r}^{\text{Diag}} &= \sum_{i=0}^{N-1}\left\langle\ g_{i}^{m,r,\varepsilon},g_{i}^{m,r,\varepsilon}\right\rangle_{L^2(\mathbb{R}^{2m-2r})}\\
&=\sum_{i=0}^{N-1}\begin{multlined}[t]
\delta^4\int_{[0,1]^4}\rho((u-v)\delta/\varepsilon)^r\rho((l-s)\delta/\varepsilon)^r\rho((u-s)\delta/\varepsilon+|i-j|\delta/\varepsilon)^{m-r} \\
 \rho((v-l)\delta/\varepsilon+|i-j|\delta/\varepsilon)^{m-r}dudvdlds
\end{multlined}
\end{align}
\begin{align}
E_{2m-2r}^{\text{Off}} &= \sum_{i,j=0, i\neq j}^{N-1}\left\langle\ g_{i}^{m,r,\varepsilon},g_{j}^{m,r,\varepsilon}\right\rangle_{L^2(\mathbb{R}^{2m-2r})} \\  
&=\sum_{i,j=0, i\neq j}^{N-1}\begin{multlined}[t]
\delta^4\int_{[0,1]^4}\rho((u-v)\delta/\varepsilon)^r\rho((l-s)\delta/\varepsilon)^r\rho((u-s)\delta/\varepsilon+|i-j|\delta/\varepsilon)^{m-r} \\ 
\rho((v-l)\delta/\varepsilon+|i-j|\delta/\varepsilon)^{m-r}dudvdlds
\end{multlined}
\end{align}
\end{corollary}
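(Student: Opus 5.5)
Starting from the orthogonal decomposition of Proposition~\ref{prop: chaos decomposition}, I first subtract the zeroth chaos $c_m^2T_0=\mathbb{E}[C_{\varepsilon,\delta}]$. What remains is a finite sum of terms $T_{2m-2r}$, $0\le r\le m-1$, and these sit in distinct chaoses of order $2m-2r\ge 2$. Multiple integrals of different orders are orthogonal, so
\[\mathbb{E}[I_p(f)I_q(g)]=\delta_{pq}\,p!\,\langle\mathrm{Sym}(f),\mathrm{Sym}(g)\rangle.\]
Hence the variance is the sum of the squared weights times $\mathbb{E}[T_{2m-2r}^2]$ and all cross terms vanish. This gives the first identity directly; the weights come out raised to the fourth power because each $T_{2m-2r}$ already carries the squared weight.

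For the bound \eqref{eq: second moment of chaos term}, I write $T_{2m-2r}=N^{2H'-1}\alpha^2 I_{2m-2r}(\sum_k g_k^{m,r,\varepsilon})$. The isometry then gives $\mathbb{E}[T^2]=(2m-2r)!\,N^{2(2H'-1)}\alpha^4\|\mathrm{Sym}(\sum_k g_k)\|^2$. Symmetrization is an orthogonal projection, so $\|\mathrm{Sym} f\|\le\|f\|$, and I expand $\|\sum_k g_k\|^2=\sum_{i,j}\langle g_i,g_j\rangle$. Splitting this sum into $i=j$ and $i\neq j$ gives $E^{\mathrm{Diag}}+E^{\mathrm{Off}}$. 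For $r=m-1$ the contraction is a two-variable kernel of the form
\[\iint \rho^{\varepsilon}(s-t)^{m-1}\,K^{\varepsilon}(\cdot-s)\otimes K^{\varepsilon}(\cdot-t)\,ds\,dt,\]
where both $s$ and $t$ run over the same interval. Swapping the two variables amounts to exchanging $s$ and $t$, and $\rho$ is even, so the kernel is already symmetric and the inequality is an equality.

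Next I compute $\langle g_i,g_j\rangle$ explicitly. I write $g_i$ as the integral over $u,v\in[i\delta,(i+1)\delta]$ of $\langle\tau_uK,\tau_vK\rangle^r(\tau_uK)^{\otimes(m-r)}\otimes(\tau_vK)^{\otimes(m-r)}$, using Fubini as in Proposition~\ref{prop: chaos decomposition}. The basic identity is $\langle\tau_aK^\varepsilon,\tau_bK^\varepsilon\rangle_{L^2}=\rho^\varepsilon(a-b)$, which follows from \eqref{eq: fOU as Wiener integral}, the isometry, and $\mathbb{E}[(Y_t^\varepsilon)^2]=1$ (absorbing $p$). With it, the inner product of two such tensors pairs $(m-r)$ factors of $\tau_u$ with $\tau_s$ and $(m-r)$ factors of $\tau_v$ with $\tau_l$. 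The contractions contribute $\rho(u-v)^r\rho(l-s)^r$, so the integrand is
\[\rho(u-v)^r\,\rho(l-s)^r\,\rho(u-s)^{m-r}\,\rho(v-l)^{m-r}.\]

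Finally I rescale $u\mapsto i\delta+\delta u$ and similarly for the other variables. This produces the factor $\delta^4$, arguments $\rho((u-s)\delta/\varepsilon+(i-j)\delta/\varepsilon)$, and $\rho^\varepsilon(x)=\rho(x/\varepsilon)$. Since $\rho$ is even and the integration domain is invariant under $u\leftrightarrow s$, $v\leftrightarrow l$, I may replace $i-j$ by $|i-j|$; on the diagonal the shift is $0$. The main obstacle is the bookkeeping of which pairing of variables appears after taking the inner product of the unsymmetrized contraction kernels. Here I would fix the convention that the contracted slots are the last $r$ and check the pairing for $m=2$, $r=1$ first.
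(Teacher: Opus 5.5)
Your proposal is correct and follows the paper's own argument. That argument runs through the isometry, the contraction $\|\mathrm{Sym} f\|\le\|f\|$, the expansion of $\|\sum_k g_k^{m,r,\varepsilon}\|^2$ into $\sum_{i,j}\langle g_i^{m,r,\varepsilon},g_j^{m,r,\varepsilon}\rangle$, the explicit evaluation of the contractions via $\langle\tau_aK^\varepsilon,\tau_bK^\varepsilon\rangle=\rho^\varepsilon(a-b)$, and the rescaling to $[0,1]^4$. You also spell out, correctly, several steps the paper leaves implicit: chaos orthogonality for the first identity, the symmetry of the $r=m-1$ kernel that gives equality, the evenness-and-swap argument that turns $i-j$ into $|i-j|$, and the fact that the diagonal shift is zero.
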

\begin{proof}
Using the isometry between Wiener chaoses and \(L^2\) spaces and the linearity of the symmetrization operator we get that
\begin{align}
\mathbb{E}[I_{2m-2r}^2] &= (2m-2r)!\left|\left|\sum_{k=0}^{N-1}\int_{k\delta}^{(k+1)\delta}(\tau_s K^{\varepsilon})^{\otimes m}ds\tilde{\otimes}_r \int_{k\delta}^{(k+1)\delta}(\tau_s K^{\varepsilon})^{\otimes m}ds\right|\right|_{L^2(\mathbb{R}^{2m-2r})}^2\\
&\leq (2m-2r)!\left|\left|\sum_{k=0}^{N-1}\int_{k\delta}^{(k+1)\delta}(\tau_s K^{\varepsilon})^{\otimes m}ds \otimes_r \int_{k\delta}^{(k+1)\delta}(\tau_s K^{\varepsilon})^{\otimes m}ds\right|\right|_{L^2(\mathbb{R}^{2m-2r})}^2\\
&= (2m-2r)!\sum_{i,j=0}^{N-1}\left\langle\ g_{i}^{m,r,\varepsilon},g_{j}^{m,r,\varepsilon}\right\rangle_{L^2(\mathbb{R}^{2m-2r})}
\label{eq: inequality symmetrization}
\end{align}
We now compute the \(r\)-contractions explicitly for \(r\in\{0,...,m\}\) as: 
\begin{align}
&g_{k}^{m,r,\varepsilon}(x_1,...,x_{m-r},y_1,...,y_{m-r})\\
&=
\begin{multlined}[t]
\int_{\mathbb{R}^p}\left(\int_{k\delta}^{(k+1)\delta}\prod_{i=1}^{m-r}\tau_u K^{\varepsilon}(x_i)\prod_{i=1}^{r}\tau_u K^{\varepsilon}(s_i)du \cdot\right. \\
\left. \int_{k\delta}^{(k+1)\delta}\prod_{i=1}^{m-r}\tau_v K^{\varepsilon}(y_i)\prod_{i=1}^{r}\tau_v K^{\varepsilon}(s_i)dv\right)ds_1...ds_r
\end{multlined}\\
&= \int_{[k\delta,(k+1)\delta]^2}\prod_{i=1}^{m-r}\tau_u K^{\varepsilon}(x_i)\tau_v K^{\varepsilon}(y_i)\left(\int_{\mathbb{R}^p}\prod_{i=1}^{r}\tau_u K^{\varepsilon}(s_i)\tau_v K^{\varepsilon}(s_i)ds_1...ds_r\right)dudv\\
&=\int_{[k\delta,(k+1)\delta]^2}\prod_{i=1}^{m-r}\tau_u K^{\varepsilon}(x_i)\tau_v K^{\varepsilon}(y_i)\left(\int_{\mathbb{R}}\tau_u K^{\varepsilon}(s)\tau_v K^{\varepsilon}(s)ds\right)^r dudv\\
&=\int_{[k\delta,(k+1)\delta]^2}\prod_{i=1}^{m-r}\tau_u K^{\varepsilon}(x_i)\tau_v K^{\varepsilon}(y_i)\rho^{\varepsilon}(u-v)^rdudv\\
\end{align}
In particular, for \(r=m\) 
\begin{align}
g_{k}^{m,m,\varepsilon} &= \int_{[0,\delta]^2}\rho^{\varepsilon}(u-v)^m dudv. 
\end{align}
In particular, the term \(T_0\) satisfies 
\begin{align}
T_0 = N^{2H'}\alpha(\varepsilon,H^*(m))^2\left(\f{c_m}{m!}\right)^2\int_{[0,\delta]^2}\rho^{\varepsilon}(u-v)^m dudv = \mathbb{E}[C_{\varepsilon,\delta}]\\ 
\end{align}
For \(0\leq r\leq m-1\) we have that the inner product may be computed as 
\begin{align}
& \left\langle\ g_{i}^{m,r,\varepsilon},g_{j}^{m,r,\varepsilon}\right\rangle_{L^2(\mathbb{R}^{2m-2r})}\\
&= \begin{multlined}[t]
\int_{\mathbb{R}^{2m-2r}}\left(\int_{[k\delta,(k+1)\delta]^2}\prod_{i=1}^{m-r}\tau_u K^{\varepsilon}(x_i)\tau_v K^{\varepsilon}(y_i)\rho^{\varepsilon}(u-v)^rdudv\right)\cdot \\ 
\left(\int_{[k\delta,(k+1)\delta]^2}\prod_{i=1}^{m-r}\tau_u K^{\varepsilon}(x_i)\tau_v K^{\varepsilon}(y_i)\rho^{\varepsilon}(u-v)^rdudv\right)dxdy
\end{multlined}\\
&=
\begin{multlined}[t]
\int_{[i\delta,(i+1)\delta]^2}dudv\int_{[j\delta,(j+1)\delta]^2}dlds\rho^{\varepsilon}(u-v)^r\rho^{\varepsilon}(l-s)^r\\
\int_{\mathbb{R}^{2m-2r}}\prod_{i=1}^{m-r}\tau_u K^{\varepsilon}(x_i)\tau_v K^{\varepsilon}(y_i)\prod_{i=1}^{m-r}\tau_s K^{\varepsilon}(x_i)\tau_l K^{\varepsilon}(y_i)dxdy
\end{multlined}\\
&=\int_{[i\delta,(i+1)\delta]^2}dudv\int_{[j\delta,(j+1)\delta]^2}dlds\quad \rho^{\varepsilon}(u-v)^r\rho^{\varepsilon}(l-s)^r\rho^{\varepsilon}(u-s)^{m-r}\rho^{\varepsilon}(v-l)^{m-r}\\
&=
\begin{multlined}[t]
\delta^4\int_{[0,1]^4}\rho((u-v)\delta/\varepsilon)^r\rho((l-s)\delta/\varepsilon)^r \\ 
\rho((u-s)\delta/\varepsilon+|i-j|\delta/\varepsilon)^{m-r}\rho((v-l)\delta/\varepsilon+|i-j|\delta/\varepsilon)^{m-r}dudvdlds
\end{multlined}
\end{align}
\end{proof}

We devote the rest of the section to determining the asymptotic orders of \(E_{2m-2r}^{\text{Diag}}\) and \(E_{2m-2r}^{\text{Off}}\) in the observational regime in which the estimator is asymptotically unbiased. This is a building block towards both extending the consistency result to more general functions \(G\) and to establishing the fluctuating behaviour of the estimator for uncertainty quantification. The two regimes \(H^*(m)>1/2\) and \(H^*(m)<1/2\) are qualitatively different: in the former, only the second chaos \(T_2\) contributes at leading order; in the latter, the chaos terms split into qualitatively different regimes governed by two critical exponents \(r_I, r_S\) introduced in \eqref{eq: rI rS def} below.

\subsection*{Variance asymptotics}

\begin{theorem}[Variance bound on a finite chaos]\label{thm: variance bound}
Let \(\delta(\varepsilon)>0\) be such that \(\delta(\varepsilon)\xrightarrow{\varepsilon\to 0} 0\) and \(\varepsilon/\delta\to 0\) as \(\varepsilon\to 0\). For \(\varepsilon\) small enough,
\[
\mathrm{Var}(C_{\varepsilon, \delta}) \;\lesssim\;
\begin{cases}
\delta^{2(2-2H)\wedge 1}, & H^*(m)>1/2,\\[0.6em]
\delta + \varepsilon^{2(2-2H)}, & H^*(m)<1/2,
\end{cases}
\]
with implicit constants depending only on \(H, m, G\). In both regimes \(\mathrm{Var}(C_{\varepsilon,\delta})\to 0\) as \(\varepsilon\to 0\), so that \(C_{\varepsilon,\delta}\to \mathbb{E}[C_{\varepsilon,\delta}]\) in \(L^2\).
\end{theorem}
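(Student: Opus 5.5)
We reduce everything to Corollary~\ref{cor: variance decomposition}. Since there are only finitely many chaoses \(0\le r\le m-1\) with fixed combinatorial weights, it suffices to bound each
\[
N^{2(2H'-1)}\alpha(\varepsilon,H^*(m))^4\bigl(E^{\mathrm{Diag}}_{2m-2r}+E^{\mathrm{Off}}_{2m-2r}\bigr)
\]
separately. We set \(T=1\), so \(N=1/\delta\), and \(\lambda:=\delta/\varepsilon\to\infty\). We change variables \(x=(u-v)\lambda\) and similar in the four-fold integrals. We then use the two facts already exploited in Theorem~\ref{thm: asymptotic unbiasedness}: \(|\rho|\le 1\), and \(\rho(s)^q\sim\kappa_H^q|s|^{q(2H-2)}\). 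These let us replace \(\rho^q\) by \(1\wedge |s|^{q(2H-2)}\) up to constants.

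\textbf{Diagonal terms.} For each \(i\) the summand is \(\delta^4 J_r(\lambda)\), where \(J_r(\lambda)=\int_{[0,1]^4}\rho(\lambda(u-v))^r\rho(\lambda(l-s))^r\rho(\lambda(u-s))^{m-r}\rho(\lambda(v-l))^{m-r}\). We bound \(J_r\) by Hölder/Young-type estimates. We integrate first over the pair carrying the exponent \(m-r\), which gives a factor \(\lambda^{-1}\int_0^\lambda |\rho|^{m-r}\), that is, \(\lambda^{-1}\) or \(\lambda^{2(m-r)(H-1)}\) depending on the integrability of \(|s|^{(m-r)(2H-2)}\). The remaining \(r\)-pairs are handled the same way, or bounded by \(1\). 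Multiplying by \(N\delta^4\alpha^4N^{2(2H'-1)}\) gives a contribution decaying like a positive power of \(\delta\) times powers of \(\varepsilon/\delta\). In the case \(H^*(m)<1/2\), where \(\alpha^4=\varepsilon^{-2}\) and \(H'=1/2\), this becomes \(\delta^{-2}\cdot\delta^3\cdot\varepsilon^{-2}\cdot\varepsilon^2\lambda^{-2}\cdot(\dots)\), and we check it is \(O(\delta)\).

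\textbf{Off-diagonal terms.} We write \(\sum_{i\ne j}=2\sum_{k=1}^{N-1}(N-k)\,F(k\lambda)\). Here \(F(k\lambda)\approx\rho(k\lambda)^{2(m-r)}\bigl(\int\!\int\rho(\lambda(u-v))^r\bigr)^2\) for \(k\ge 2\), while \(k=1\) is treated like a diagonal term. The sum \(\sum_k (N-k)(k\lambda)^{2(m-r)(2H-2)}\) then splits into two cases.
\begin{itemize}
\item If \(2(m-r)(2H-2)<-1\), the series converges and the sum is \(\asymp N\lambda^{2(m-r)(2H-2)}\).
\item Otherwise a Riemann sum gives \(N^{2}N^{2(m-r)(2H-2)}\lambda^{2(m-r)(2H-2)}\,a(H,m,r)\), with \(a(H,m,r)\) from \eqref{eq: a Hmr def}.
\end{itemize}
For \(H^*(m)>1/2\) we have \(H>1/2\), and the worst case is \(r=m-1\). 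There the second case applies exactly when \(4H-4>-1\), that is \(H>3/4\). After normalisation by \(N^{2(2H^*(m)-1)}\varepsilon^{4H^*(m)-4}\), the powers of \(\varepsilon\) should cancel through \(\lambda\), leaving \(\delta^{2(2-2H)}\) when \(H>3/4\) and \(\delta\) otherwise. This is the claimed \(\delta^{2(2-2H)\wedge1}\). Smaller \(r\) give higher powers of \(\rho\) and hence faster decay.

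For \(H^*(m)<1/2\), every exponent satisfying \(2(m-r)(2H-2)<-1\) gives an order \(\delta\) term. The residual \(r\) with \(2(m-r)(2H-2)\ge -1\) produce terms of order \((\varepsilon/\delta)^{\dots}\). Combined with the \(k=1\) boundary terms, we expect these to be bounded by \(\varepsilon^{2(2-2H)}\). This requires the inner \(\int\rho^r\) factors, where \(r\) is large enough that \(\rho^r\) is integrable, to be bounded by \(\lambda^{-1}\).

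\textbf{Main obstacle.} The main obstacle is the bookkeeping in the regime \(H^*(m)<1/2\). There different \(r\) fall into different integrability classes for both \(\rho^r\) (inner factors) and \(\rho^{2(m-r)}\) (cross factors), and \(H<1/2\) allows \(\kappa_H<0\) with sign-changing \(\rho\). We handle this by working with \(|\rho|\) throughout, since only upper bounds are needed. We will enumerate the critical thresholds \(r\) at which \(r(2H-2)\) and \(2(m-r)(2H-2)\) cross \(-1\), and verify in each of the resulting (at most four) cases that the product of powers is dominated by \(\delta+\varepsilon^{2(2-2H)}\). Finally, both bounds tend to \(0\) because \(\delta\to0\) and \(\varepsilon\to0\). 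Together with Theorem~\ref{thm: asymptotic unbiasedness}, this gives the stated \(L^2\) convergence.
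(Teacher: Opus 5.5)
Your plan matches the paper's own proof. You reduce through Corollary~\ref{cor: variance decomposition} to \(E^{\mathrm{Diag}}_{2m-2r}\) and \(E^{\mathrm{Off}}_{2m-2r}\). You then sort \(r\) by whether \(\rho^r\) is integrable and whether \(\sum_k (N-k)(k\lambda)^{2(m-r)(2H-2)}\) is a summable series or a Riemann sum. For \(H^*(m)>1/2\) your bookkeeping is correct: the worst chaos is \(r=m-1\), giving \(\delta^{2(2-2H)\wedge 1}\).

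On the diagonal, do not bound the \(r\)-factors by \(1\). When \((m-r)(2-2H)<1\) this loses a power of \(\lambda\) that nothing recovers. Hölder with exponents \(m/r\) and \(m/(m-r)\) gives \(J_r\le\bigl(\int_{[0,1]^2}|\rho(\lambda(u-v))|^m\,du\,dv\bigr)^2\), which is what the paper uses. For \(H^*(m)<1/2\) this is \(\lesssim\lambda^{-2}\). Your displayed product counts \(\lambda^{-2}\) twice; the correct check is \(\delta^3\varepsilon^{-2}\lambda^{-2}=\delta\).

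The genuine gap is the step you yourself flag as the main obstacle. The residual chaoses with \(2(m-r)(2-2H)\le 1\) are \(\lesssim\varepsilon^{2(2-2H)}\) only when \(r(2-2H)>1\), and nothing forces this. For \(m\ge 3\) and \(H\in(3/4,1-1/(2m))\) there can be \(r\) with \(r(2-2H)<1\) and \(2(m-r)(2-2H)<1\) simultaneously. This is the mixed case of Lemma~\ref{lem: asymptotics m>1 H<1/2}. Writing \(x=2-2H\), such a chaos contributes
\[
\varepsilon^{-2}E^{\mathrm{Off}}_{2m-2r}\asymp\varepsilon^{2}\,\lambda^{4-2rx}\,N^{2-2(m-r)x}\lambda^{-2(m-r)x}=\delta^{2-2rx}\,\varepsilon^{2mx-2}.
\]
Take \(m=3\), \(H=0.8\), \(r=2\), \(\delta=\varepsilon^{0.8}\). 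Then this term is \(\asymp\varepsilon^{0.72}\), whereas \(\delta+\varepsilon^{2(2-2H)}\asymp\varepsilon^{0.8}\). Because \(\rho>0\) for \(H>1/2\) and the chaoses are orthogonal, the term bounds \(\mathrm{Var}(C_{\varepsilon,\delta})\) from below up to a constant. So the stated rate cannot be proved in this sub-regime.

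The paper's proof has the same defect. It only plugs in Lemma~\ref{lem: asymptotics m>1 H<1/2}, whose mixed-case entry \(\delta^{2+2(m-r)(2H-2)}(\varepsilon/\delta)^{2m(2-2H)}\) has a sign slip; the exponent should be \(2+2(m-r)(2-2H)\). Even the corrected entry is not dominated by \(\delta+\varepsilon^{2(2-2H)}\).

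What your case analysis does prove is that each of the four types of terms tends to zero. Hence \(\mathrm{Var}(C_{\varepsilon,\delta})\to 0\), and the \(L^2\) conclusion stands. A correct rate for \(H^*(m)<1/2\) must include the extra term \(\max_r\delta^{2-2r(2-2H)}\varepsilon^{2m(2-2H)-2}\), taken over such mixed \(r\).
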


The chaos-by-chaos asymptotics behind Theorem~\ref{thm: variance bound} are recorded in Propositions~\ref{prop: chaos asymptotics H>1/2} and~\ref{prop: chaos asymptotics H<1/2} below. Their proofs reduce, via Corollary~\ref{cor: variance decomposition}, to estimating \(E^{\mathrm{Diag}}_{2m-2r}\) and \(E^{\mathrm{Off}}_{2m-2r}\) on the relevant range of \(r\); the technical asymptotic-bound lemmas underlying these estimates are collected in Appendix~\ref{sec: proofs of consistency}.

\begin{proposition}[Chaos asymptotics, \(H^*(m)>1/2\)]\label{prop: chaos asymptotics H>1/2}
Let \(\delta(\varepsilon)>0\) be such that \(\delta(\varepsilon)\xrightarrow{\varepsilon\rightarrow 0} 0\) and \(\varepsilon/\delta\rightarrow 0\) as \(\varepsilon\rightarrow 0\). For \(\varepsilon\) small enough and \(H\neq 3/4\) we have the asymptotic behaviour
\begin{align}
    \label{eq: asymptotic finite chaoses H>1/2}
\mathbb{E}[T_{2}^2]&\sim D_{\star}(H,m)\,\delta^{2(2-2H)\wedge 1},\\ 
\mathbb{E}[T_{2m-2r}^2]&\leq \bigl(D^{\mathrm{Diag}}_{H,m,r}+D^{\mathrm{Off}}_{H,m,r}\bigr)\delta^{2(m-r)(2-2H)\wedge 1}\quad 1\leq r< m-1,
\end{align}
where \(D_{\star}(H,m)\) is defined in \eqref{eq: D star} and the prefactors \(D^{\mathrm{Diag}}_{H,m,r}, D^{\mathrm{Off}}_{H,m,r}>0\), depending only on \(H,m\) and \(r\), are read off Lemmas~\ref{lem: asymptotic variance m=1 H>1/2}--\ref{lem: asymptotic variance m>1 H>1/2}; only \(D_{\star}(H,m)\) is used by name in the rest of the paper. The borderline value \(H=3/4\) carries an additional \(|\log\delta|\) factor in the second-chaos asymptotic. In particular,
\begin{equation}
\label{eq: variance bound finite chaos H>1/2}
\mathbb{E}[(C_{\varepsilon, \delta}-\mathbb{E}[C_{\varepsilon, \delta}])^2]\sim D_{\star}(H,m)\,\delta^{2(2-2H)\wedge 1}.
\end{equation}
\end{proposition}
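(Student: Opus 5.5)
We work from Corollary~\ref{cor: variance decomposition}. Since \(H^*(m)>1/2\), we have \(H'=H^*(m)\) and \(\alpha(\varepsilon,H^*(m))=\varepsilon^{H^*(m)-1}\). The prefactor in \eqref{eq: second moment of chaos term} is therefore \(N^{2(2H^*-1)}\varepsilon^{4H^*-4}\) with \(N=\delta^{-1}\). In each inner product \(\langle g_i,g_j\rangle\) we substitute \(\delta/\varepsilon=:\lambda\to\infty\) and use the tail asymptotics \(\rho(s)^q\sim\kappa_H^q s^{q(2H-2)}\) from \eqref{eq: rho asymptotic}. The plan is to compute, for each \(0\le r\le m-1\), the diagonal and off-diagonal sums as explicit powers of \(\delta\) and \(\lambda\), multiply by the prefactor, and check that the powers of \(\varepsilon\) cancel at leading order. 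This cancellation reflects the self-similar scaling \(\varepsilon^{H^*}\int_0^{t/\varepsilon}\).

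\textbf{Diagonal terms.} Within a block of length \(\lambda\) the integrand is a product of four \(\rho\)-factors, and we bound the block integral by its large-argument behaviour. The \(r\)-factors \(\rho((u-v)\lambda)^r\) integrate against \(\lambda\) either to a constant times \(\lambda^{-1}\), when \(r(2H-2)<-1\), or to \(\lambda^{r(2H-2)}\) otherwise. The two \((m-r)\)-factors contribute \(\lambda^{2(m-r)(2H-2)}\), up to integrability corrections. Multiplying by \(N\), by \(\delta^4\) and by the prefactor, we expect the diagonal to be \(O(\delta)\). This is always dominated by \(\delta^{2(m-r)(2-2H)\wedge1}\).

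\textbf{Off-diagonal terms.} For \(|i-j|=k\ge 2\) we replace the \((m-r)\)-factors by \(\kappa_H^{2(m-r)}(k\lambda)^{2(m-r)(2H-2)}\), and the \(r\)-factors by their block integrals as above. The sum over \(i\neq j\) becomes \(2\sum_{k=1}^{N-1}(N-k)k^{2(m-r)(2H-2)}\). When \(2(m-r)(2H-2)>-1\), this is a Riemann sum equal to \(N^{2+2(m-r)(2H-2)}\,2a(H,m,r)\), with \(a\) as in \eqref{eq: a Hmr def}. After the prefactor it produces \(\delta^{2(m-r)(2-2H)}\). Otherwise the sum is \(O(N)\) and gives \(\delta\). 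This yields the exponent \(2(m-r)(2-2H)\wedge1\), and \(H=3/4\) (with \(r=m-1\)) is exactly the case where the series is logarithmically divergent, which accounts for the extra \(|\log\delta|\).

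\textbf{The case \(r=m-1\), and the main obstacle.} For \(r=m-1\) we need an exact \(\sim\), not just an upper bound. Here Corollary~\ref{cor: variance decomposition} gives equality, and the \(r\)-factors \(\rho^{m-1}\) are non-integrable because \(H^*(m)>1/2\) forces \((m-1)(2H-2)>-1\). Their block integral must therefore be computed precisely: \(\int_{[0,1]^2}\rho((u-v)\lambda)^{m-1}\,du\,dv\sim\kappa_H^{m-1}\lambda^{(m-1)(2H-2)}\cdot 2/\bigl(((m-1)(2H-2)+1)((m-1)(2H-2)+2)\bigr)\). This factor appears squared in \(D_\star\). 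The inner Riemann sum then produces \(2a=1/((4H-3)(2H-1))\)-type constants. Justifying the replacement of \(\rho\) by its power tail uniformly requires dominated convergence, and the small-\(k\) and near-diagonal contributions must be shown to be of lower order. We expect this to be the main obstacle, together with tracking that all \(\varepsilon\)-powers cancel exactly.

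\textbf{Conclusion.} For \(r<m-1\) the exponent \(2(m-r)(2-2H)\wedge1\) is at least that of \(r=m-1\), so the second chaos dominates. Summing these bounds in the variance decomposition of Corollary~\ref{cor: variance decomposition} gives \eqref{eq: variance bound finite chaos H>1/2}.
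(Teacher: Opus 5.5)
Your plan is essentially the paper's own argument. The paper likewise inserts the diagonal/off-diagonal asymptotics of Lemmas~\ref{lem: asymptotic variance m=1 H>1/2}--\ref{lem: asymptotic variance m>1 H>1/2} (Riemann/zeta sums from Lemma~\ref{lem: sum asymptotics}, squared block factor from Lemma~\ref{lem: integral asymptotics}) into Corollary~\ref{cor: variance decomposition}. The one difference is the diagonal, which the paper bounds in a single step by \(\delta^3\bigl(\int_{[0,1]^2}\rho((u-v)\delta/\varepsilon)^m\,du\,dv\bigr)^2\) instead of your factor-by-factor scaling; your sub-case \(r(2H-2)<-1\) never occurs here, since \(r(2H-2)\geq m(2H-2)>-1\).

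The remaining caveats come from the statement itself, and the paper shares them:
\begin{itemize}
\item The constant \(D_{\star}(H,m)\) and the dominance of the off-diagonal part require \(H>3/4\). This is automatic for \(m\geq 2\), but for \(m=1\), \(H\in(1/2,3/4)\), both parts are of order \(\delta\) with a positive constant, while \(D_{\star}<0\).
\item Your prefactor drops the factor \((2m-2r)!=2\) from Corollary~\ref{cor: variance decomposition}. What your computation actually establishes is \(\|\tilde f_{\varepsilon,\delta}\|^2\to D_{\star}(H,m)\), which is the form used in Theorem~\ref{thm: non central limit}. That means \(\mathbb{E}[T_2^2]\sim 2D_{\star}(H,m)\,\delta^{2(2-2H)}\), and the variance carries the further weight \(\bigl(c_m m!/\sqrt{(m-1)!}\bigr)^4\).
\end{itemize}
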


When \(H^*(m)<1/2\) the chaos terms are governed by two critical values of \(r\) that determine which of the integrals and Riemann sums underlying the asymptotics are convergent and which diverge. We define
\begin{align}
    \label{eq: rI rS def}
    r_I &= \min\{0\leq r \leq m: r(2H-2)<-1\}\\
    r_S &= \min\{\lfloor m/2\rfloor\leq r \leq m: 2(m-r)(2H-2)>-1\}\nonumber
\end{align}
For the integrability note that if \(r>r_I\) then \(r(2H-2)<-1\) and the integral \(\int_0^{\infty}\rho(s)^r ds<\infty\), whereas if \(r\leq r_I\) the integral diverges and we need to account for its order. For the Riemann sum, if \(r>r_S\) then \(2(m-r)(2H-2)>-1\) and \(\int_0^1(1-x)x^{2(m-r)(2H-2)}dx<\infty\), whereas if \(r\leq r_S\) the integral diverges and we again need its order.

These two values do not always appear: \(r_S=m\) unless \(m>2\) and \(H\in(3/4,1-1/(2m))\), in which case \(2(2H-2)>-1\) and we have at least \(m-1\leq r_S\); for \(r_I\) we have that \(r_I=0\) if \(H<1/2\) and \(r_I>0\) if \(H>1/2\) since \(2H-2>-1\) in the latter case. To summarize:
\begin{itemize}
    \item If \(H<1/2\) then \(r_I=0\), \(r_S=m\) and all the terms in the chaos decomposition have the same asymptotic behaviour.
    \item If \(H\in(1/2,3/4)\) then \(r_S = m\) but \(r_I>0\).
    \item If \(H\in(3/4, 1-1/(2m))\) then \(r_S<m\) and \(r_I>0\).
\end{itemize}
Note that if \(H\geq 1 - 1/(2m)\) then \(H^*(m)\geq 1/2\). We now compare the two in the latter case.
\begin{proposition}\label{prop: rI rS comparison}
Suppose that \(m>2\) and \(H\in(3/4,1-1/(2m))\). Then:
\begin{itemize}
    \item If \(H^*(m)>1/4\), then \(r_I\geq r_S\).
    \item If \(H^*(m)<1/4\), then \(r_I\leq r_S\).
    \item If \(H^*(m)=1/4\) then \(r_I=r_S\).
\end{itemize}
\end{proposition}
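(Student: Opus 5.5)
The plan is to turn both definitions in \eqref{eq: rI rS def} into integer thresholds and compare the two thresholds. Write \(a:=1/(2-2H)>0\), and let \(\lceil x\rceil^{+}\) denote the smallest integer strictly greater than \(x\). The map \(x\mapsto\lceil x\rceil^{+}\) is nondecreasing, and this monotonicity is the only tool needed.

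\textbf{Step 1 (closed forms).} Since \(2H-2<0\), the condition \(r(2H-2)<-1\) is equivalent to \(r>a\). Hence \(r_I=\lceil a\rceil^{+}\), provided \(\lceil a\rceil^{+}\le m\). Likewise, \(2(m-r)(2H-2)>-1\) is equivalent to \(m-r<a/2\), that is \(r>m-a/2\). Hence \(r_S=\max\{\lfloor m/2\rfloor,\lceil m-a/2\rceil^{+}\}\).

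Both sets in \eqref{eq: rI rS def} are nonempty under the hypotheses:
\begin{itemize}
\item \(r=m\) always satisfies the \(r_S\) condition, since \(0>-1\).
\item \(r=m\) satisfies the \(r_I\) condition because \(H<1-1/(2m)\) gives \(m(2-2H)>1\), i.e.\ \(a<m\).
\end{itemize}
Also \(H>3/4\) gives \(a>2\).

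\textbf{Step 2 (translate \(H^*(m)\)).} From \(H^*(m)=1-m(1-H)\) we get
\[
H^*(m)>1/4 \iff m(1-H)<3/4 \iff m<\tfrac{3}{2}a \iff a>m-a/2 .
\]
The analogous equivalences hold with \(<\) and \(=\) in place of \(>\). So the three cases of the statement are exactly the three orderings of the two thresholds \(a\) and \(m-a/2\).

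\textbf{Step 3 (conclude).}
\begin{itemize}
\item If \(a>m-a/2\), monotonicity gives \(\lceil a\rceil^{+}\ge\lceil m-a/2\rceil^{+}\). We also have \(a>2m/3\ge m/2\ge\lfloor m/2\rfloor\), so \(r_I\ge\lfloor m/2\rfloor\) as well. Together these give \(r_I\ge r_S\).
\item If \(a<m-a/2\), then \(r_I=\lceil a\rceil^{+}\le\lceil m-a/2\rceil^{+}\le r_S\).
\item If \(a=m-a/2\), i.e.\ \(a=2m/3\), the two thresholds coincide, so \(\lceil a\rceil^{+}=\lceil m-a/2\rceil^{+}\). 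Moreover \(\lceil 2m/3\rceil^{+}>2m/3\ge\lfloor m/2\rfloor\), so the \(\lfloor m/2\rfloor\) floor in \(r_S\) is inactive and \(r_I=r_S\).
\end{itemize}

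The only delicate point is the extra constraint \(r\ge\lfloor m/2\rfloor\) in the definition of \(r_S\). It could in principle force \(r_S\) above \(\lceil m-a/2\rceil^{+}\) and break the inequality \(r_I\ge r_S\). I handle it as above: in the cases \(H^*(m)\ge 1/4\) one has \(a\ge 2m/3\), which already places \(r_I\) at or above \(\lfloor m/2\rfloor\). In the case \(H^*(m)<1/4\) the constraint only increases \(r_S\), so it helps the inequality \(r_I\le r_S\). Otherwise the argument is a direct monotonicity check.
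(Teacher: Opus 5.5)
Your proof is correct and reaches the result by a somewhat different route than the paper. The paper never computes \(r_I\) or \(r_S\). It uses the identity \(2(m-r)(2H-2)+2r(2H-2)=4H^*(m)-4\) to show that, for a fixed \(r\), the defining inequality of one set implies that of the other, and then reads off the ordering of the minima. When \(H^*(m)>1/4\), \(r(2H-2)<-1\) forces \(2(m-r)(2H-2)>4H^*(m)-2>-1\); when \(H^*(m)<1/4\) the implication runs the other way; and at \(H^*(m)=1/4\) the two conditions are equivalent. You instead solve both inequalities in closed form, \(r>a\) and \(r>m-a/2\) with \(a=1/(2-2H)\), and compare the thresholds. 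Your Step 2 equivalence \(H^*(m)>1/4\iff a>m-a/2\) is the paper's identity in another guise, so the underlying mechanism is the same. What your version adds is care about the admissible range of \(r\). The paper's argument ignores the lower bound \(r\ge\lfloor m/2\rfloor\) in the definition of \(r_S\). That bound matters exactly in the cases \(H^*(m)\ge 1/4\), where one needs \(r_I\ge\lfloor m/2\rfloor\) for \(r_I\) to belong to the set defining \(r_S\). The paper also phrases Case 1 with \(r>r_I\) where taking \(r=r_I\) is what is needed. Your observation \(a\ge 2m/3\ge\lfloor m/2\rfloor\) settles the first point, and you also verify that both defining sets are nonempty. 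The paper's route is shorter and avoids rounding notation. Yours additionally gives the explicit formulas \(r_I=\lfloor a\rfloor+1\) and \(r_S=\max\{\lfloor m/2\rfloor,\lfloor m-a/2\rfloor+1\}\). These could be reused to decide which cases of the chaos asymptotics for \(H^*(m)<1/2\) actually occur.
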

\begin{proof}
\textit{Case 1:} assume that \(H^*(m)>1/4\) and let \(r>r_I\) then \(-r(2H-2)>1\) and
\begin{align}
    2(m-r)(2H-2) &= 4H^*(m) - 4 - 2r(2H-2)\\
&> 4H^*(m) - 4 + 2 = 4H^*(m) -2 >-1\\
\end{align}
which implies that \(r>r_S\) and hence \(r_I\geq r_S\).

\textit{Case 2:} assume that \(H^*(m)<1/4\) and let \(r>r_S\) then \(-(m-r)(2H-2)<1/2\) and
\begin{align}
    r(2H-2) &= 2H^*(m) -2 - (m-r)(2H-2)\\
&< 2H^*(m) -2 + 1/2 = 2H^*(m) - 3/2 < -1
\end{align}
which implies that \(r>r_I\) and hence \(r_I\leq r_S\).

\textit{Case 3:} if \(H^*(m) = 1/4\) then using the fact that
\begin{equation}
2(m-r)(2H-2) + 2r(2H-2) = 4H^*(m) -4 = -3
\end{equation}
we have that \(2(m-r)(2H-2)>-1\) if and only if \(r(2H-2)<-1\) and hence \(r_I=r_S\).
\end{proof}

\begin{proposition}[Chaos asymptotics, \(H^*(m)<1/2\)]\label{prop: chaos asymptotics H<1/2}
Let \(\delta(\varepsilon)>0\) be such that \(\delta(\varepsilon)\xrightarrow{\varepsilon\rightarrow 0} 0\) and \(\varepsilon/\delta\rightarrow 0\) as \(\varepsilon\rightarrow 0\). For \(\varepsilon\) small enough, for any \(0\leq r\leq m-1\),
\begin{equation}
    \label{eq: asymptotic finite chaoses H<1/2}
\mathbb{E}[T_{2m-2r}^2]\leq D^{\mathrm{Diag}}_{H,m,r}\delta+D^{\mathrm{Off}}_{H,m,r}f(\delta,\varepsilon,m,r),
\end{equation}
with \(D^{\mathrm{Diag}}_{H,m,r}, D^{\mathrm{Off}}_{H,m,r}>0\) read off Lemmas~\ref{lem: second chaos m=1 H<1/2}--\ref{lem: asymptotics m>1 H<1/2}, and
\begin{equation}
\label{eq: f case table H<1/2}
f(\delta, \varepsilon, m,r) =
\begin{cases}
\delta(\varepsilon/\delta)^{2m(2-2H)-2} \quad &r< r_I\wedge r_S\\
\varepsilon^{2(m-r)(2-2H)} \quad &r> r_I\vee r_S\\
\delta^{2(m-r)(2H-2)}(\varepsilon/\delta)^{2m(2-2H)-2} \quad &r_S< r < r_I\\
\delta(\varepsilon/\delta)^{2(m-r)(2-2H)} \quad &r_I< r< r_S
\end{cases}
\end{equation}
The borderline cases \(r(2H-2)=-1\) or \(2(m-r)(2H-2)=-1\) carry the same orders up to logarithmic corrections: when \(r=r_I\) and \(r(2H-2)=-1\),
\[
f(\delta,\varepsilon,m,r) \sim
\begin{cases}
\delta(\varepsilon/\delta)^{2(m-r)(2-2H)}\log(\delta/\varepsilon)^2, & H^*(m)>1/4,\\
\varepsilon^{2(m-r)(2-2H)}\log(\delta/\varepsilon)^2, & H^*(m)<1/4;
\end{cases}
\]
when \(r=r_S\) and \(2(m-r)(2H-2)=-1\),
\[
f(\delta,\varepsilon,m,r) \sim
\begin{cases}
\varepsilon^{2(m-r)(2-2H)}\log(\delta), & H^*(m)>1/4,\\
\log(\delta)(\varepsilon/\delta)^{2m(2-2H)-2}, & H^*(m)<1/4;
\end{cases}
\]
and when \(r=r_I=r_S\) (so that \(H^*(m)=1/4\)), \(f(\delta,\varepsilon,m,r)\sim \varepsilon\log(\delta)\log(\delta/\varepsilon)\). In particular,
\begin{equation}
\label{eq: variance bound finite chaos H<1/2}
\mathbb{E}[(C_{\varepsilon, \delta}-\mathbb{E}[C_{\varepsilon, \delta}])^2]\lesssim \delta+\varepsilon^{2(2-2H)}.
\end{equation}
\end{proposition}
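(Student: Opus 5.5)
The plan is to start from Corollary~\ref{cor: variance decomposition}. In the regime \(H^*(m)<1/2\) we have \(H'=1/2\) and \(\alpha^2=1/\varepsilon\), so
\[
\mathbb{E}[T_{2m-2r}^2]\le (2m-2r)!\,\varepsilon^{-2}\bigl(E^{\mathrm{Diag}}_{2m-2r}+E^{\mathrm{Off}}_{2m-2r}\bigr).
\]
It therefore suffices to estimate the two kernel sums separately, for each fixed \(0\le r\le m-1\).

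\textbf{Diagonal term.} First I would undo the rescaling in the explicit formula and rewrite each summand as \(\varepsilon^4\int_{[0,\delta/\varepsilon]^4}\rho(u-v)^r\rho(l-s)^r\rho(u-s)^{m-r}\rho(v-l)^{m-r}\). Fixing \(u\) and integrating out the three remaining variables, every term is bounded by \(\int_{\mathbb{R}^3}|\rho(a)|^r|\rho(b)|^{m-r}|\rho(c)|^{r}\cdots\). Here I would use the integrable-product bound \(|\rho|\le 1\wedge|s|^{2H-2}\) together with the fact that the total decay exponent along every cycle of the graph \(u\!-\!v\!-\!l\!-\!s\!-\!u\) is \(m(2H-2)<-1\). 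Via a Hölder/Brascamp--Lieb type inequality for the four-cycle, this makes the triple integral finite and gives a per-summand bound \(K\varepsilon^4(\delta/\varepsilon)\). Summing over \(N=1/\delta\) blocks and multiplying by \(\varepsilon^{-2}\) yields \(K\varepsilon^3/\varepsilon^{2}\cdot\varepsilon^{-1}\cdots\). I would keep careful track of the powers here, the target being \(D^{\mathrm{Diag}}_{H,m,r}\delta\), which is exactly the Wiener quadratic-variation variance scaling.

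\textbf{Off-diagonal term.} Next I would group the pairs by lag \(k=|i-j|\ge1\). For \(k\ge 2\), the cross factors \(\rho(\cdot+k\delta/\varepsilon)^{m-r}\) are evaluated at distance \(\asymp k\delta/\varepsilon\gg1\), so I would replace them by \(\kappa_H^{2(m-r)}(k\delta/\varepsilon)^{2(m-r)(2H-2)}\) using \eqref{eq: rho asymptotic}. The inner factors \(\rho(u-v)^r\rho(l-s)^r\) are integrated over a block of length \(\delta/\varepsilon\). That inner integral is of order \(\delta/\varepsilon\) if \(r>r_I\) (integrable), or \((\delta/\varepsilon)^{2+r(2H-2)}\) if \(r<r_I\). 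The lag sum \(\sum_{k\le N}(N-k)(k\delta/\varepsilon)^{2(m-r)(2H-2)}\) is a Riemann sum whose behaviour is governed by \(a(H,m,r)\). It is convergent (order \(N^2\) times a power) if \(r>r_S\), and dominated by small \(k\) (order \(N\)) if \(r<r_S\). Crossing these two dichotomies produces the four rows of \eqref{eq: f case table H<1/2}. The comparison of Proposition~\ref{prop: rI rS comparison} tells me which mixed cases can actually occur, and when an exponent equals \(-1\) the integral or sum produces the stated logarithm. The lag \(k=1\) (adjacent blocks) I would fold into the diagonal-type estimate, since the cross distances can be \(O(1)\) there.

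\textbf{Conclusion and main obstacle.} Finally I would sum over \(r\) and check that every entry of the table is \(\lesssim\delta+\varepsilon^{2(2-2H)}\). The worst exponent comes from \(r=m-1\), giving \(\varepsilon^{2(2-2H)}\). The other rows either carry \(\delta\) times a vanishing power of \(\varepsilon/\delta\) or carry \(\varepsilon^{2(m-r)(2-2H)}\le\varepsilon^{2(2-2H)}\); for the third row I need \(2(m-r)(2H-2)+2m(2-2H)-2\ge0\), which is where the constraint \(r>r_S\) is used. The step I expect to be the main obstacle is the off-diagonal bookkeeping in the mixed regimes \(r_S<r<r_I\) and \(r_I<r<r_S\). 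There the inner block integrals and the lag Riemann sum diverge at different rates, and replacing \(\rho\) by its asymptotic has to be justified uniformly in \(u,v,l,s\in[0,1]\) and in \(k\). I would handle this by splitting \(k\le k_0\) from \(k>k_0\), bounding the small-lag part crudely by the diagonal estimate and the large-lag part by the power law.
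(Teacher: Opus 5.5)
\textbf{Gap 1: the diagonal estimate.} Your off-diagonal plan follows the paper's route: group by lag, replace the cross factors by the power law \eqref{eq: rho asymptotic}, and cross Lemma~\ref{lem: integral asymptotics} with Lemma~\ref{lem: sum asymptotics}. Treating adjacent blocks separately is more careful than the paper. The diagonal step, however, fails.

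The condition \(m(2H-2)<-1\) only makes a two-point integral \(\int|\rho|^m\) finite. Your triple integral lives on the four-cycle on \((u,v,l,s)\) with edge weights \(r,m-r,r,m-r\). Its total decay is \(2m(2-2H)\) in three free variables, so it can only converge if \(2m(2-2H)>3\), i.e.\ \(H^*(m)<1/4\), which \(H^*(m)<1/2\) does not give. For \(r=0\) the cycle splits into the two edges \(\{u,s\}\), \(\{v,l\}\), and it diverges for every \(H\). There the diagonal factorises exactly: per block it is \(\varepsilon^4\bigl(\int_{[0,h]^2}\rho(a-b)^m\,da\,db\bigr)^2\sim 4\varepsilon^4h^2\bigl(\int_0^\infty\rho^m\bigr)^2\) with \(h=\delta/\varepsilon\), which is quadratic, not linear, in \(h\). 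Your per-block bound \(K\varepsilon^4(\delta/\varepsilon)\), summed over \(N=1/\delta\) blocks and multiplied by \(\alpha^4=\varepsilon^{-2}\), would give \(K\varepsilon=o(\delta)\). This contradicts the exact order \(\delta\) of the \(r=0\) term whenever \(\int_0^\infty\rho^m\neq 0\) (e.g.\ \(m\) even), and it is why your powers do not close.

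The missing idea is the paper's H\"older step. With exponents \(m/r\) and \(m/(m-r)\), the four-fold integral over \([0,1]^4\) is at most \(\bigl(\int_{[0,1]^2}|\rho((u-v)\delta/\varepsilon)|^m\,du\,dv\bigr)^2\). By case 2 of Lemma~\ref{lem: integral asymptotics} this is \(\sim 4(\varepsilon/\delta)^2\bigl(\int_0^\infty|\rho|^m\bigr)^2\). Hence \(E^{\mathrm{Diag}}_{2m-2r}\lesssim 4\bigl(\int_0^\infty|\rho|^m\bigr)^2\delta\varepsilon^2\), which gives the term \(D^{\mathrm{Diag}}_{H,m,r}\delta\).

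\textbf{Gap 2: the final step in the third row.} The inequality you say you need simplifies to \(2(m-r)(2H-2)+2m(2-2H)-2=2r(2-2H)-2\ge 0\), i.e.\ \(r(2-2H)\ge 1\). That is exactly what \(r<r_I\) excludes; it is not where \(r>r_S\) enters.

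That row also carries a sign slip. A convergent lag sum times the squared divergent block integral gives \(E^{\mathrm{Off}}_{2m-2r}\asymp\delta^2\varepsilon^{2(m-r)(2-2H)}(\varepsilon/\delta)^{2r(2-2H)}\). After normalisation this is \(\delta^{2(m-r)(2-2H)}(\varepsilon/\delta)^{2m(2-2H)-2}=\varepsilon^{2m(2-2H)-2}\delta^{2-2r(2-2H)}\). The displayed entry instead blows up for \(\delta=\varepsilon^{\gamma}\) with \(\gamma\) close to \(1\).

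The corrected entry tends to zero, but it is not \(\lesssim\delta+\varepsilon^{2(2-2H)}\) in general. Take \(m=5\), \(H=0.89\): then \(H^*(m)=0.45\), \(r_S=3\), \(r_I=5\). For \(r=4\) and \(\delta=\varepsilon^{0.44}\) the entry is of order \(\varepsilon^{0.3056}\gg\varepsilon^{0.44}\asymp\delta+\varepsilon^{2(2-2H)}\). Since \(\rho>0\) for \(H>1/2\), every \(\langle g_i^{m,r,\varepsilon},g_j^{m,r,\varepsilon}\rangle\) is nonnegative. So this is a genuine lower bound for \(\mathbb{E}[T_2^2]\), not an artefact of the estimates. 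It also refutes your claim that the worst exponent comes from \(r=m-1\): that presumes \(m-1>r_I\vee r_S\), and here \(r=m-1=4\) is exactly the offending index.

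Consequently no bookkeeping will produce \eqref{eq: variance bound finite chaos H<1/2} in the regime \(r_S<r<r_I\); the paper's one-line ``plug in'' argument does not address this either. What the corrected table does give is \(\mathbb{E}[(C_{\varepsilon,\delta}-\mathbb{E}[C_{\varepsilon,\delta}])^2]\to 0\), or the stated rate under an extra restriction on \(\delta\).
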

The proof of all the results in this section follows straightforwardly from the asymptotic order of \(E^{\mathrm{Diag}}_{H,m,r}\) and \(E^{\mathrm{Off}}_{H,m,r}\) shown in Appendix~\ref{sec: proofs of consistency} in each of the relevant cases and plugging them into the variance decomposition. \eqref{eq: second moment of chaos term}.
\section{Asymptotic Distribution}
\label{sec: asymptotic distribution}
The fluctuating behaviour of the estimator \(C_{\varepsilon, \delta}\) around the true value has two distinct regimes depending on the amount of correlation present in the underlying model. If it decays fast enough, then we see 
a Central Limit Theorem behaviour and the fluctuations, appropriately renormalized, are Gaussian. If the correlation decays too slowly, then these are not any longer Gaussian but, as in the limiting setting \cite{chronopoulouSelfsimilarityParameterEstimation2011}, 
we observe a Rosenblatt distribution. This resembles very well the classical dichotomy in the Breuer-Major Theorem. Note that these two different regimes can already be deduced from the asymptotic behaviour of the second chaos in the expansion of the estimator. In 
the case where the order is \(\delta\) that means we need to rescale the difference by \(\delta^{-1/2} = \sqrt{N}\) to see non-trivial fluctuations which is a good indicator that we should expect a Central Limit Theorem. The proof of this Central Limit Theorem is current work in progress.
\subsection{Non-Central Limit Case}
\begin{theorem}
    \label{thm: non central limit}
Let \(H>3/4\) and \(H^*(m)>1/2\). If \(C_{\varepsilon,\delta}\) is the estimator defined in \eqref{eq: estimators} then, if \(\delta = \varepsilon^{\alpha}\) for some \(0<\alpha<\frac{2H^*(m)-1}{2H^*(m-1)-1}\) we have that
\begin{equation}
\delta^{2H-2}(C_{\varepsilon,\delta} - C) \xrightarrow[\varepsilon\rightarrow 0]{L^2} c_H Z^{2,2H-1}_1
\end{equation}
where \(Z^{2,2H-1}_1\) is the observed value at time \(1\) of the Rosenblatt process constructed with the same Brownian motion that generates \(y^{\varepsilon}\) and \(c_H\) is an explicit constant. 
\begin{multline}
\label{eq: constant non central limit}
c_H = \left(\frac{c_m m!}{\sqrt{(m-1)!}}\right)^2\frac{(1/\Gamma(2H-1))^m}{((m-1)(2H-2)+2)((m-1)(2H-2)+1)} \\ \left(\frac{8}{(4H-3)(2H-1)}\right)^{1/2}
\end{multline}
\end{theorem}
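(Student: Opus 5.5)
Throughout, \(G=c_mH_m\), as in Proposition~\ref{prop: chaos decomposition}. The plan is to split \(\delta^{2H-2}(C_{\varepsilon,\delta}-C)\) into three pieces and treat each separately:
\[
\delta^{2H-2}\bigl(\mathbb{E}[C_{\varepsilon,\delta}]-C\bigr),\qquad
\delta^{2H-2}\sum_{r=0}^{m-2}\Bigl(\tfrac{c_m m!}{(m-r)!\sqrt{r!}}\Bigr)^2T_{2m-2r},\qquad
\delta^{2H-2}\Bigl(\tfrac{c_m m!}{\sqrt{(m-1)!}}\Bigr)^2T_2 .
\]
The first two should vanish in \(L^2\), and the third should converge in \(L^2\) to \(c_HZ^{2,2H-1}_1\).

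\textbf{Bias term.} By \eqref{eq: quantitative estimate of expectation}, the bias is \(K_1(\varepsilon/\delta)^{2H^*(m)-1}+K_2(\varepsilon/\delta)^{2H^*(m)}\). With \(\delta=\varepsilon^\alpha\), the rescaled bias is of order \(\varepsilon^{(1-\alpha)(2H^*(m)-1)-\alpha(2-2H)}\). Since \(2H^*(m-1)-1=2H^*(m)-1+(2-2H)\), this exponent is positive exactly when \(\alpha<\frac{2H^*(m)-1}{2H^*(m-1)-1}\). This is precisely where the subsampling condition comes from.

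\textbf{Higher chaoses.} For \(r\le m-2\), Proposition~\ref{prop: chaos asymptotics H>1/2} gives \(\mathbb{E}[T_{2m-2r}^2]\lesssim\delta^{2(m-r)(2-2H)\wedge1}\). Since \(H>3/4\), we have \(2(2-2H)<1\), so the normalisation \(\delta^{2(2H-2)}\) exactly cancels the second chaos order \(\delta^{2(2-2H)}\). Each term with \(m-r\ge2\) is then of order \(\delta^{[2(m-r)(2-2H)\wedge1]-2(2-2H)}\to0\), because the exponent is strictly positive. Orthogonality of the chaoses (Corollary~\ref{cor: variance decomposition}) then shows this whole piece vanishes in \(L^2\).

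\textbf{Second chaos.} This is the main step and the main obstacle. The approach is to identify \(\delta^{2H-2}T_2\) explicitly as \(I_2(h_{\varepsilon})\) and prove \(h_\varepsilon\to c\,f_1^{2H-1,2}\) in \(L^2(\mathbb{R}^2)\), with \(f_1^{2H-1,2}\) the Rosenblatt kernel from \eqref{eq: gehringer hermite process}. By the isometry this gives \(L^2(\Omega)\) convergence.

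Here \(h_\varepsilon\) is \(N^{2H^*(m)-1}\varepsilon^{2H^*(m)-2}\delta^{2H-2}\sum_k\int_{[k\delta,(k+1)\delta]^2}\rho^\varepsilon(u-v)^{m-1}K^\varepsilon(x-u)K^\varepsilon(y-v)\,du\,dv\), symmetrised. I would proceed in three substeps.

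First, replace \(\rho^\varepsilon(u-v)^{m-1}\) by its tail \(\kappa_H^{m-1}|(u-v)/\varepsilon|^{(m-1)(2H-2)}\). Also replace \(K^\varepsilon\) by its large-scale profile \(p^{-1}\varepsilon^{1-H}c\,(x-u)_+^{H-3/2}\), using \eqref{eq: fOU kernel representation}, since \(\int_0^{s/\varepsilon}e^{v-s/\varepsilon}v^{H-3/2}dv\sim (s/\varepsilon)^{H-3/2}\). The powers of \(\varepsilon\) cancel against \(\alpha^2=\varepsilon^{2H^*(m)-2}\), and the errors are controlled by \(\varepsilon/\delta\).

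Second, perform the inner integral \(\int_{[0,\delta]^2}|u-v|^{(m-1)(2H-2)}(\cdot)\,du\,dv\) at block scale. This produces the factor \(\delta^{(m-1)(2H-2)+2}/\bigl(((m-1)(2H-2)+2)((m-1)(2H-2)+1)\bigr)\) together with the product \((x-k\delta)_+^{H-3/2}(y-k\delta)_+^{H-3/2}\).

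Third, recognise \(\delta\sum_k\) as a Riemann sum converging to \(\int_0^1(s-x)_+^{H-3/2}(s-y)_+^{H-3/2}ds=f_1^{2H-1,2}(x,y)\). The exponent is consistent, because \(1/2+(1-(2H-1))/2=3/2-H\). Collecting powers of \(N\) and \(\delta\) shows all \(\delta\)-dependence cancels.

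The hardest part is controlling these three approximations in \(L^2(\mathbb{R}^2)\). Rather than bounding each replacement directly, I would use a cleaner route: show \(\mathbb{E}[(\delta^{2H-2}T_2)^2]\) converges to \(c^2\,\mathbb{E}[(Z^{2,2H-1}_1)^2]\cdot 2\), which is the limit computed in Lemma~\ref{lem: asymptotic variance m>1 H>1/2} via \(D_\star(H,m)\). Then show that the cross inner product \(\langle h_\varepsilon,f_1^{2H-1,2}\rangle\) converges to the same value. The cross product is a single explicit integral of \(\rho\)-type covariances against Hölder kernels and is amenable to dominated convergence.

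Expanding \(\|h_\varepsilon-cf_1\|^2\) then gives \(0\). Finally, matching the constants should reproduce \(c_H\): the factor \(\sqrt{8/((4H-3)(2H-1))}\) is \(\sqrt{2D_\star}\) divided by the remaining prefactors, using \(\mathbb{E}[(Z_1)^2]=1\).
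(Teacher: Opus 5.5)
This is essentially the paper's proof. You use the same three-way split:
\begin{itemize}
\item the bias, controlled by \eqref{eq: quantitative estimate of expectation}, which is exactly where the restriction on \(\alpha\) comes from;
\item the chaoses of order \(2m-2r\geq 4\), which vanish after the \(\delta^{2H-2}\) rescaling by Proposition~\ref{prop: chaos asymptotics H>1/2} since \(2(2-2H)<1\), and are then summed using orthogonality;
\item the second chaos, handled through the isometry and the polarization identity by matching the limits of \(\|h_\varepsilon\|^2\) (via \(D_{\star}(H,m)\)), \(\|f_1\|^2\) and \(\langle h_\varepsilon,f_1\rangle\).
\end{itemize}
The cross term \(\langle h_\varepsilon,f_1\rangle\) is the content of Lemma~\ref{lem: convergence of the inner product}, which the paper proves in the appendix. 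Your dominated-convergence remark for it can be made rigorous using the uniform majorant \(\varepsilon^{H-1}\int_{\mathbb{R}}\tau_uK^{\varepsilon}(x)(s-x)_+^{H-3/2}\,dx\le C|u-s|^{2H-2}\). This bound follows from \(\sup_{w>0}w^{3/2-H}g(w)<\infty\), with \(g\) as in Lemma~\ref{lem: fOU kernel asymptotics}, together with Lemma~\ref{lem: aux integral 1}.

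There is one bookkeeping slip. Since \(\mathbb{E}[I_2(h)^2]=2\|h\|^2\) and \(\mathbb{E}[(Z^{2,2H-1}_1)^2]=1\), the second-moment target is \(c^2\), not \(2c^2\); equivalently, \(\|h_\varepsilon\|^2\to c^2\|f_1\|^2=c^2/2\).
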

The bulk of the proof is contained in the following Lemma, whose proof we postpone to an appendix section due to its technical involvement.
\begin{lemma}
\label{lem: convergence of the inner product}
Let \[\tilde{f}_{\varepsilon, \delta}(x,y) = N^{2H^*(m)-1 + (2-2H)}\varepsilon^{2(H^*(m)-1)}\sum_{k=0}^{N-1}g_k^{m,m-1,\varepsilon}(x,y)\] and \[f_t(x,y) =\frac{K(2H-1,2)}{2}\int_0^t (u-x)_{+}^{H-3/2}(u-y)_{+}^{H-3/2}du\] then, if \(\delta = 1/N\) is chosen such that \(\delta = \varepsilon^{\alpha}\) 
for some \(0<\alpha<1\) it holds that 
\begin{multline}
\langle \tilde{f}_{\varepsilon, \delta}, f_1\rangle_{L^2(\mathbb{R}^2)} \xrightarrow[\varepsilon\rightarrow 0]{} \\ \frac{2(1/\Gamma(2H-1))^{m}}{((m-1)(2H-2)+2)((m-1)(2H-2)+1)\left((4H-3)(2H-1)2\right)^{1/2}}
\end{multline}
\end{lemma}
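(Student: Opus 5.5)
We compute the inner product explicitly by reducing it to a fourfold integral of covariance functions, and then pass to the limit by a Riemann-sum argument.

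\textbf{Step 1: the kernel.} From the proof of Corollary~\ref{cor: variance decomposition}, with \(r=m-1\),
\[g_k^{m,m-1,\varepsilon}(x,y)=\int_{[k\delta,(k+1)\delta]^2}\tau_uK^{\varepsilon}(x)\,\tau_vK^{\varepsilon}(y)\,\rho^{\varepsilon}(u-v)^{m-1}\,du\,dv.\]
The kernel \(f_1\) is itself an integral over \(w\in[0,1]\) of \((w-x)_+^{H-3/2}(w-y)_+^{H-3/2}\). By Fubini,
\[\langle\tilde f_{\varepsilon,\delta},f_1\rangle=c\sum_{k}\int_{[k\delta,(k+1)\delta]^2}\rho^{\varepsilon}(u-v)^{m-1}\int_0^1\Phi_\varepsilon(u,w)\,\Phi_\varepsilon(v,w)\,dw\,du\,dv,\]
where
\[\Phi_\varepsilon(u,w)=\int_{\mathbb{R}}K^{\varepsilon}(x-u)\,(w-x)_+^{H-3/2}\,dx\]
and \(c\) collects the normalising constants.

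\textbf{Step 2: asymptotics of \(\Phi_\varepsilon\).} Using the explicit representation \eqref{eq: fOU kernel representation} for \(H>1/2\), I will show that \(\Phi_\varepsilon(u,w)\) equals, up to an \(\varepsilon\)-dependent scaling \(\varepsilon^{a}\), a function of \((w-u)/\varepsilon\). This function behaves like a constant times \(|w-u|^{2H-2}\) once \(|w-u|\gg\varepsilon\).

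Heuristically, \(K^\varepsilon\) acts like \(\varepsilon^{1/2}\) times an approximate identity \(p^{-1}\)-normalised at the fractional level. Convolving it with \((w-\cdot)_+^{H-3/2}\) reproduces the fBM-type covariance \((w-u)^{2H-2}\), through the Beta identity \(\int (w-x)_+^{H-3/2}(u-x)_+^{H-3/2}dx=\beta(H-\tfrac12,2-2H)|w-u|^{2H-2}\). The constant \(p^2\) in \eqref{eq: p constant} and \(\kappa_H=1/\Gamma(2H-1)\) should combine here. So \(\int_0^1\Phi_\varepsilon(u,w)\Phi_\varepsilon(v,w)dw\) becomes, at leading order, a constant times \(\int_0^1|w-u|^{2H-2}|w-v|^{2H-2}dw\). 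Since \(u,v\) lie in the same \(\delta\)-block, this is approximately \(\int_0^1|w-u|^{4H-4}dw\), which is integrable because \(H>3/4\).

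\textbf{Step 3: the inner block integral.} Set \(\rho^\varepsilon(u-v)^{m-1}=\rho((u-v)/\varepsilon)^{m-1}\sim\kappa_H^{m-1}((u-v)/\varepsilon)^{(m-1)(2H-2)}\) for \(|u-v|\gg\varepsilon\). Then
\[\int_{[0,\delta]^2}\rho^\varepsilon(u-v)^{m-1}\,du\,dv\sim\frac{2\kappa_H^{m-1}\,\varepsilon^{-(m-1)(2H-2)}\,\delta^{(m-1)(2H-2)+2}}{((m-1)(2H-2)+1)((m-1)(2H-2)+2)}.\]
This is exactly where the two factors in the denominator of the limit come from. It requires \((m-1)(2H-2)>-1\), which is implied by \(H^*(m)>1/2\).

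\textbf{Step 4: Riemann sum and normalisation.} Summing over \(k\) gives a Riemann sum in the block location \(k\delta\) of \(\int_0^1\int_0^1|w-t|^{4H-4}\,dw\,dt=\frac{2}{(4H-3)(4H-2)}\). Collecting the powers, \(N^{2H^*(m)-1+2-2H}\varepsilon^{2H^*(m)-2}\varepsilon^{-(m-1)(2H-2)}\varepsilon^{a}\delta^{(m-1)(2H-2)+2}\cdot N\), the exponents of both \(\varepsilon\) and \(\delta\) should vanish. This is a bookkeeping check that fixes \(a\). The constants \(K(2H-1,2)\), \(p^2\) and the Beta functions should then reduce to \(\frac{2\kappa_H^m}{\ldots}\bigl((4H-3)(2H-1)2\bigr)^{-1/2}\).

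\textbf{Main obstacle.} The hard part is justifying the replacements in Steps 2 and 3 uniformly, via dominated convergence. Near the diagonals \(|w-u|\lesssim\varepsilon\) and \(|u-v|\lesssim\varepsilon\), the exact kernels differ from their power-law asymptotics, and these regions must be shown to be negligible. For this I would bound \(|\Phi_\varepsilon(u,w)|\le K\varepsilon^a\bigl(1\wedge((w-u)/\varepsilon)^{2H-2}\bigr)\) and \(|\rho|\le 1\wedge|s|^{2H-2}\). The resulting errors are of relative order \((\varepsilon/\delta)^{\text{positive}}\) and \(\delta^{\text{positive}}\) (the latter from the Riemann-sum discretisation of the integrable singularity \(|w-t|^{4H-4}\)), and both vanish when \(\delta=\varepsilon^\alpha\) with \(\alpha<1\).
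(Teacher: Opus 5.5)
Your proposal is correct and is essentially the paper's argument. It uses Fubini to reach $\int_0^1\Phi_\varepsilon(u,w)\Phi_\varepsilon(v,w)\,dw$ weighted by $\rho^\varepsilon(u-v)^{m-1}$; the fOU-kernel asymptotic $\Phi_\varepsilon(u,w)\approx\varepsilon^{1-H}\beta(H-\tfrac12,2-2H)|w-u|^{2H-2}$; Lemma~\ref{lem: integral asymptotics} for the block integral of $\rho^{m-1}$; and a block Riemann sum of $|w-t|^{4H-4}$ whose diagonal blocks are of order $\delta^{4H-3}$, with the constants closing via $p^2\beta(H-\tfrac12,2-2H)=1/\Gamma(2H-1)$. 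Your exact scaling $\Phi_\varepsilon(u,w)=\varepsilon^{H-1}\Psi((w-u)/\varepsilon)$ with $|\Psi(z)|\le K(1\wedge|z|^{2H-2})$, plus dominated convergence, is just a tidier replacement for the paper's $(1)$--$(3)$ region split, its $A^{<}/A^{\varepsilon}$ bounds and its case split at $H=5/6$.

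One correction to your heuristic: the leading term comes from the power tail $K^\varepsilon(s)\approx\varepsilon^{1-H}s^{H-3/2}$ for $s\gg\varepsilon$, not from $K^\varepsilon$ acting as $\varepsilon^{1/2}$ times an approximate identity. The near-origin mass only contributes the negligible correction of size $\varepsilon^{1/2}|w-u|^{H-3/2}$.
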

\begin{proof}[of Theorem \ref{thm: non central limit}]
We first decompose the quantity of interest as 
\begin{multline}
\delta^{2H-2}(C_{\varepsilon,\delta} - C) - c_H Z^{2,2H-1}_1 = \\ \delta^{2H-2}(C_{\varepsilon,\delta} - \mathbb{E}[C_{\varepsilon,\delta}]- c_HZ^{2,2H-1}_1) + \delta^{2H-2}(\mathbb{E}[C_{\varepsilon,\delta}] - C)
\end{multline}
We first show that the second term converges to zero using the previous asymptotic unbiasedness result. In particular, from \eqref{eq: quantitative estimate of expectation} we have that
\begin{align}
\delta^{2H-2}|\mathbb{E}[C_{\varepsilon,\delta}] - C| &\sim K \delta^{2H-2}(\varepsilon/\delta)^{2H^*(m)-1}\\
& = K \varepsilon^{2H^*(m)-1}\delta^{2H-2 + m(2-2H)-1}\\
& = K \varepsilon^{2H^*(m)-1}\delta^{1-2H^*(m-1)}
\end{align}
and the right-hand side vanishes by the assumption on \(\alpha\). Recall that \(C_{\varepsilon,\delta} - \mathbb{E}[C_{\varepsilon,\delta}] = \sum_{r=1}^{2m}\left(\frac{c_m m!}{(m-r)!\sqrt{r!}}\right)^2T_{2m-2r}\) where the decomposition is orthogonal in \(L^2\). We further decompose this term as 
\begin{align}
\delta^{2H-2}(C_{\varepsilon,\delta} - \mathbb{E}[C_{\varepsilon,\delta}]- c_HZ^{2,2H-1}_1) & = (\delta^{2H-2}T_{2} - c_HZ^{2,2H-1}_1) + \sum_{r=1}^{m-1}\delta^{2H-2}T_{2m-2r}
\end{align}
to then show that each term converges to zero. For the second term we have from Proposition \ref{prop: chaos asymptotics H>1/2} that
\begin{equation}
\delta^{2(2H-2)}\mathbb{E}[T_{2m-2r}^2] \lesssim
\begin{cases}
\delta^{2(m-r-1)(2-2H)} & r>r_I\\
\delta^{1-2(2-2H)}|\log(\delta)| & r=r_I\\
\delta^{1-2(2-2H)} & r<r_I    
\end{cases}
\end{equation}
and they all vanish as \(H>3/4\) and \(r<m\). The main step is to show that the first term converges to zero. To do this we will take advantage of the isometry property between Wiener chaoses and the space of square-integrable symmetric functions. 
Note that 
\begin{equation}
    Z^{2,2H-1}_t = I_2(f_t(x,y))
\end{equation}
where 
\begin{equation}
    f_t(x,y) = \frac{K(2H-1,2)}{2}\int_0^t (u-x)_{+}^{H-3/2}(u-y)_{+}^{H-3/2}du.
\end{equation}
and 
\begin{equation}
    K(2H-1,2) = \left(\frac{2(2H-1)(4H-3)}{\beta(H-1/2,2-2H)^2}\right)^{1/2}.
\end{equation}
On the other hand, recall that 
\begin{align}
\delta^{2H-2}T_2 &= \left(\frac{c_m m!}{\sqrt{(m-1)!}}\right)^2 N^{(2H^*(m)-1)+2-2H}\varepsilon^{2(H^*(m)-1)}I_{2}\left(\sum_{k=0}^{N-1}g_k^{m,m-1,\varepsilon}(x,y)\right) \\ 
&= I_2(f_{\varepsilon,\delta}(x,y))
\end{align}
where 
\begin{equation}
     f_{\varepsilon,\delta}(x,y) = \left(\frac{c_m m!}{\sqrt{(m-1)!}}\right)^2 N^{2H^*(m)-1}N^{2-2H}\varepsilon^{2(H^*(m)-1)}\sum_{k=0}^{N-1}g_k^{m,m-1,\varepsilon}(x,y)
\end{equation}
Therefore, by the isometry property of Wiener chaoses it is equivalent to show that 
\begin{equation}
    \left|\left|c_H^{-1}f_{\varepsilon,\delta} - f_1(x,y)\right|\right|_{L^2(\mathbb{R}^2)} \xrightarrow[\varepsilon\to 0]{}0
\end{equation}
To prove this we use firstly the polarization identity of the Hilbert space \(L^2(\mathbb{R}^2)\): 
\begin{equation}
\left|\left|c_H^{-1}f_{\varepsilon,\delta} - f_1(x,y)\right|\right|_{L^2(\mathbb{R}^2)}^2 = ||c_H^{-1}f_{\varepsilon,\delta}||_{L^2(\mathbb{R}^2)}^2 + ||f_1(x,y)||_{L^2(\mathbb{R}^2)}^2 - 2\langle c_H^{-1}f_{\varepsilon,\delta}, f_1\rangle_{L^2(\mathbb{R}^2)}.
\end{equation}
We will show that both the norms and the inner product converge to the same constant, which concludes the proof. The named symbols \(D_{\star}(H,m), c_H, K(2H-1,2)\) are those of \eqref{eq: D star}, \eqref{eq: constant non central limit} and \eqref{eq: hermite normalisation} respectively. Let \[\tilde{f}_{\varepsilon,\delta}(x,y)=N^{2H^*(m)-1 + 2-2H}\varepsilon^{2(H^*(m)-1)}\sum_{k=0}^{N-1}g_k^{m,m-1,\varepsilon}(x,y).\] We know from \eqref{eq: asymptotic finite chaoses H>1/2} (case \(r=m-1\)) that
\begin{equation}
\|\tilde{f}_{\varepsilon,\delta}\|_{L^2(\mathbb{R}^2)}^2 \xrightarrow[\varepsilon\to 0]{} D_{\star}(H,m),
\end{equation}
where we recall that
\[D_{\star}(H,m) = \frac{4\,(1/\Gamma(2H-1))^{2m}}{(4H-3)(2H-1)\bigl((m-1)(2H-2)+2\bigr)^{2}\bigl((m-1)(2H-2)+1\bigr)^{2}},\]
and so, dividing by \(c_H^2\) with
\begin{equation}
    \label{eq: cH explicit reminder}
c_H^2 = 2 D_{\star}(H,m)\left(\frac{c_m m!}{\sqrt{(m-1)!}}\right)^4 \quad\text{(by \eqref{eq: constant non central limit}),}
\end{equation}
it follows that
\begin{equation}
||c_H^{-1}f_{\varepsilon,\delta}||_{L^2(\mathbb{R}^2)}^2 \xrightarrow[\varepsilon\to 0]{} 1/2.
\end{equation}
It is easy to see that 
\begin{align*}
&||f_1||_{L^2(\mathbb{R}^2)}^2 = \frac{K(2H-1,2)^2}{4}\int_{\mathbb{R}^2}\left(\int_0^1 (u-x)_{+}^{H-3/2}(u-y)_{+}^{H-3/2}du\right)^2 dxdy \\
&\begin{multlined}
=\frac{K(2H-1,2)^2}{4}\int_0^1\int_0^1\left(\int_{-\infty}^{u\vee v}(u-x)_{+}^{H-3/2}(v-x)_{+}^{H-3/2}dx\right)\\\left(\int_{-\infty}^{u\vee v}(u-y)_{+}^{H-3/2}(v-y)_{+}^{H-3/2}dy\right)dudv 
\end{multlined} \\
&=\frac{K(2H-1,2)^2}{4}\beta(H-1/2,2-2H)^2\int_{[0,1]^2}|u-v|^{4H-4}dudv\\
&= \frac{\beta(H-1/2,2-2H)^2}{(4H-3)(2H-1)}\frac{(4H-3)(2H-1)}{2\beta(H-1/2,2-2H)^2}= 1/2
\end{align*}
which agrees with the variance of the Rosenblatt process at time \(1\) (which is \(1\) after multiplied by the \(2!\) factorial that accounts for symmetries). Now the last step is to show that 
\begin{equation}
\langle c_H^{-1}f_{\varepsilon,\delta}, f_1\rangle_{L^2(\mathbb{R}^2)} \xrightarrow[\varepsilon\to 0]{} 1/2
\end{equation} 
For this we simply note that, by Lemma \ref{lem: convergence of the inner product} we have that 
\begin{equation}
\langle \tilde{f}_{\varepsilon,\delta}, f_1\rangle_{L^2(\mathbb{R}^2)} \xrightarrow[\varepsilon\to 0]{} R
\end{equation} 
where 
\begin{equation}
R = \frac{2(1/\Gamma(2H-1))^{m}}{((m-1)(2H-2)+2)((m-1)(2H-2)+1)\left((4H-3)(2H-1)2\right)^{1/2}},
\end{equation}
therefore
\begin{equation}
\langle f_{\varepsilon,\delta}, f_1\rangle_{L^2(\mathbb{R}^2)} \xrightarrow[\varepsilon\to 0]{} \left(\frac{c_m m!}{\sqrt{(m-1)!}}\right)^2R.
\end{equation}
Combining \eqref{eq: constant non central limit} for \(c_H\) with the explicit form of \(R\) above, the prefactors \((1/\Gamma(2H-1))^m\) and \(((m-1)(2H-2)+2)((m-1)(2H-2)+1)\) cancel and one is left with
\begin{equation}
    \label{eq: rosenblatt cancellation}
\left(\frac{c_m m!}{\sqrt{(m-1)!}}\right)^2 R \;=\; \frac{c_H}{2},\qquad\text{hence}\qquad\langle c_H^{-1}f_{\varepsilon,\delta}, f_1\rangle_{L^2(\mathbb{R}^2)}\xrightarrow[\varepsilon\to 0]{} \tfrac{1}{2},
\end{equation}
matching the limits \(1/2\) for \(\|c_H^{-1}f_{\varepsilon,\delta}\|^2\) and \(\|f_1\|^2\). This is the consistency of the constants which the polarization identity requires.
\end{proof}

\section{Application to 1d fluctuation problems}
An application to have in mind of the previous results \eqref{eq: multiscale system} is to perform statistical inference on the effective diffusivity of random ODEs of the form 
\begin{equation}
\label{eq: random odes}
\dot{x}_t^{\varepsilon} = h(x_t^{\varepsilon})g(y_t^{\varepsilon}) + \alpha(\varepsilon)G(y_t^{\varepsilon}) 
\end{equation}
where \(y_t^{\varepsilon}\) is the fractional Ornstein-Uhlenbeck process defined in \eqref{eq: fOU SDE}, \(G\) is a measurable function that is square-integrable
with respect to the standard Gaussian measure just as in previous sections and \(h\in\mathcal{C}_b^2(\mathbb{R},\mathbb{R})\), \(g\in\mathcal{C}_b(\mathbb{R},\mathbb{R})\), \(H\in(1/3,1)\). 
As shown in \cite{gehringerFunctionalLimitTheorems2022a} these equations have effective dynamics as \(\varepsilon\to 0\) described as solutions to stochastic differential equations 
driven by either Brownian motion or Hermite processes of the form 
\begin{equation}
d\bar{x}_t = \bar{g}h(\bar{x}_t)dt + C\circ \ dW_t
\end{equation}
or 
\begin{equation}
d\bar{x}_t = \bar{g}h(\bar{x}_t)dt + C dZ_t^{H,m}
\end{equation}
where \(\bar{g} = \int_{\mathbb{R}}g(y)d\mu(y)\) and the stochastic integrals are to be understood in the Stratonovich and Young sense respectively (clearly it does not matter in this case of additive noise but the noise could be
multiplicative in which case the interpretation matters, see \cite{gehringerFunctionalLimitTheorems2022a}).

We show that the estimator \(C_{\varepsilon,\delta}\) defined in \eqref{eq: estimators} may be used to consistently estimate the constant \(C\) appearing in the limiting equations above. In other words, that bounded drifts do not alter the \(L^2\)-consistency
of the estimator.

We first derive the estimate 
\begin{proposition}
Let \(x^{\varepsilon}_t\) be the solution to the random ODE \eqref{eq: random odes} and \(y_t^{\varepsilon}\) be the fractional Ornstein-Uhlenbeck process defined in \eqref{eq: fOU SDE}. Then for any \(G\in L^2(\mu)\) we have that 
\begin{equation}
\left\|\alpha(\varepsilon, H^*(m))\int_0^\delta G(Y^{\varepsilon}_s)ds\right\|_{L^2}\sim K\delta^{1/2\vee H^*(m)}
\end{equation}
\end{proposition}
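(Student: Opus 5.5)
The plan is to compute the second moment exactly and then read off its order, reusing the computation behind Theorem~\ref{thm: asymptotic unbiasedness}. Stationarity of \(Y^{\varepsilon}\) and the Hermite expansion of \(G\) give, via Proposition~\ref{prop: hermite orthogonality},
\[
\Bigl\|\alpha(\varepsilon,H^*(m))\int_0^\delta G(Y^{\varepsilon}_s)\,ds\Bigr\|_{L^2}^2=\alpha(\varepsilon,H^*(m))^2\,\varepsilon^2\int_{-\delta/\varepsilon}^{\delta/\varepsilon}\bigl(\delta/\varepsilon-|s|\bigr)R(|s|)\,ds,
\qquad R(s)=\sum_{q\ge m}c_q^2q!\rho(s)^q .
\]
The statement is asymptotic, and I interpret it in the subsampling regime \(\varepsilon/\delta\to 0\), as in Section~\ref{sec: asymptotic unbiasedness}; for fixed \(\varepsilon\) and \(\delta\to0\) the order is \(\delta\) by Proposition~\ref{prop: C to 0}. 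The borderline \(H^*(m)=1/2\) is excluded, consistent with the rest of the paper.

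\textbf{Case \(H^*(m)<1/2\).} Here \(\alpha^2\varepsilon^2=\varepsilon\). Split the integral as in the proof of Theorem~\ref{thm: asymptotic unbiasedness}:
\[
\delta\int_{\mathbb{R}}R-\varepsilon\int_{|s|<\delta/\varepsilon}|s|R(|s|)\,ds-\delta\int_{|s|>\delta/\varepsilon}R .
\]
By \eqref{eq: order of second moment} the first term gives \(C^2\delta\), and the remainder is \(O\bigl(\varepsilon(\delta/\varepsilon)^{2H^*(m)\vee0}\bigr)=o(\delta)\). Hence the square of the norm is asymptotic to \(C^2\delta\), and the norm is asymptotic to \(C\delta^{1/2}\), with \(1/2=1/2\vee H^*(m)\).

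\textbf{Case \(H^*(m)>1/2\).} Here \(\alpha^2\varepsilon^2=\varepsilon^{2H^*(m)}\). The computation \eqref{eq: split H>1/2} in the proof of Theorem~\ref{thm: asymptotic unbiasedness} gives \(\mathbb{E}[(X^{\varepsilon}_\delta)^2]=\delta^{2H^*(m)}\bigl(C^2+O((\varepsilon/\delta)^{2H^*(m)-1})\bigr)\). The norm is therefore asymptotic to \(C\delta^{H^*(m)}\). In both cases this is the claim with \(K=C\), up to the convention for \(C\) versus \(C^2\).

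\textbf{Extension to general \(G\) and the main obstacle.} Everything above uses only the covariance \(R\) and its tail \(R(s)\sim c_m^2m!\,\kappa_H^m s^{m(2H-2)}\), so the argument holds for general \(G\in L^2(\gamma)\) with rank \(m\). The delicate step is controlling the higher chaoses \(q>m\) uniformly in the tail. I would bound \(|\rho|^q\le|\rho|^m\) for \(|s|\ge s_0\), chosen so that \(|\rho|<1\) there. Then \(\sum_{q>m}c_q^2q!|\rho|^q\le\|G\|^2|\rho|^{m+1}\), which has strictly faster decay than \(|\rho|^m\), and dominated convergence applies.

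The remark about the drift term \(h(x)g(y)\) is not needed here. The proposition concerns only the additive fluctuation functional, and the bounded-drift contribution, of order \(\delta\), will be handled afterwards.
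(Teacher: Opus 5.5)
Your argument is correct and is essentially the paper's: rescale time by \(\varepsilon\), write the second moment as the double integral of \(R(s)=\sum_{q\ge m}c_q^2q!\rho(s)^q\) over \([0,\delta/\varepsilon]^2\), and read off its order as \(\delta/\varepsilon\to\infty\). The only difference is that the paper quotes Lemma~\ref{lem: integral asymptotics} where you reuse the split from Theorem~\ref{thm: asymptotic unbiasedness}, and your explicit regime \(\varepsilon/\delta\to0\), exclusion of \(H^*(m)=1/2\) and control of the chaoses \(q>m\) make precise what the paper leaves implicit.

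One caveat applies to both arguments: for \(H<1/2\) one has \(\int_{\mathbb{R}}\rho(s)\,ds=0\), so the first-chaos part of \(G\) contributes nothing to \(C^2\). For \(G=H_1\), for instance, \(C=0\) and the claimed equivalence with \(C\delta^{1/2}\) degenerates, so in that case you need \(C>0\), i.e.\ \(G\) must not be purely linear.
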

\begin{proof}
Recall that \(\alpha(\varepsilon, H^*(m)) = \varepsilon^{(H^*(m)-1)\vee -1/2}\) and therefore 
\[\alpha(\varepsilon, H^*(m))\int_0^\delta G(Y^{\varepsilon}_s)ds = \varepsilon^{(H^*(m))\vee 1/2}\int_0^{\delta/\varepsilon} G(Y_s)ds\]
where the equality is in law. Using now the same procedure as in the proofs of Section 4 and the estimates from Lemma \ref{lem: integral asymptotics}
we have that 
\begin{align}
\mathbb{E}\left[\left(\int_0^{\delta/\varepsilon} G(Y_s)ds\right)^2\right] &= \int_{[0,\delta/\varepsilon]^2}\mathbb{E}[G(Y_s)G(Y_t)]dsdt\\
&\sim K(\delta/\varepsilon)^{(m(2H-2)+2)\vee 1}\\
&= K (\delta/\varepsilon)^{2H^*(m)\vee 1}
\end{align}
concluding that 
\begin{equation}
\left|\left|\alpha(\varepsilon)\int_0^\delta G(Y^{\varepsilon}_s)ds\right|\right|_{L^2}\sim K\varepsilon^{H^*(m)\vee 1/2}(\delta/\varepsilon)^{H^*(m)\wedge 1} = K\delta^{H^*(m)\vee 1/2}
\end{equation}
\end{proof}
\begin{theorem}
Let \(x^{\varepsilon}_t\) be the solution to the random ODE \eqref{eq: random odes} and \(y_t^{\varepsilon}\) be the fractional Ornstein-Uhlenbeck process defined in \eqref{eq: fOU SDE}. Then the estimator \(C_{\varepsilon, \delta}\) defined in \eqref{eq: estimators} is consistent in \(L^2\) for the constant \(C\)
appearing in the effective dynamics of \(x^{\varepsilon}_t\) as \(\varepsilon\to 0\) if and only if \(\delta(\varepsilon)\) is chosen so that \(\delta\to 0\) and \(\varepsilon/\delta\to 0\).
\end{theorem}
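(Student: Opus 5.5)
We will write the slow component of \eqref{eq: random odes} as a perturbation of the pure-fluctuation process of \eqref{eq: multiscale system}. Set
\[
x^{\varepsilon}_t = x_0 + \int_0^t h(x^{\varepsilon}_s)g(y^{\varepsilon}_s)\,ds + X^{\varepsilon}_t,\qquad X^{\varepsilon}_t=\alpha(\varepsilon,H^*(m))\int_0^t G(y^{\varepsilon}_s)\,ds .
\]
Then every increment splits as \(\Delta_i x^{\varepsilon} = A_i + \Delta_i X^{\varepsilon}\), where \(A_i=\int_{i\delta}^{(i+1)\delta}h(x^\varepsilon_s)g(y^\varepsilon_s)ds\). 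Since \(h,g\) are bounded, \(|A_i|\le \|h\|_\infty\|g\|_\infty\delta\) deterministically. Writing \(\tilde C_{\varepsilon,\delta}\) for the estimator built from \(x^\varepsilon\) and \(C_{\varepsilon,\delta}\) for the one built from \(X^\varepsilon\), we expand
\[
\tilde C_{\varepsilon,\delta}-C_{\varepsilon,\delta} = N^{(2H'-1)\vee 0}\sum_{i=0}^{N-1}\bigl(A_i^2 + 2A_i\Delta_iX^{\varepsilon}\bigr).
\]

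\textbf{Sufficiency.} We bound the perturbation in \(L^2\) term by term, using Minkowski.
\begin{itemize}
\item The drift-squared term is at most \(N^{(2H'-1)\vee0}N\delta^2 \lesssim \delta^{(2-2H')\wedge 1}\to 0\), since \(H'<1\).
\item For the cross term, Minkowski gives \(\|\sum_i A_i\Delta_iX^\varepsilon\|_{L^2}\le K\delta\sum_i\|\Delta_iX^\varepsilon\|_{L^2}\).
\item By stationarity and the preceding Proposition, \(\|\Delta_iX^\varepsilon\|_{L^2}\sim K\delta^{H'}\) with \(H'=H^*(m)\vee 1/2\). This holds as soon as \(\delta/\varepsilon\to\infty\), by Lemma~\ref{lem: integral asymptotics}.
\item Hence the cross term is \(\lesssim N^{(2H'-1)\vee 0}\delta^{H'}=\delta^{1-H'}\) in the case \(H'>1/2\), and \(\delta^{1/2}\) in the case \(H'=1/2\); both vanish.
\end{itemize}
It then remains to show that \(C_{\varepsilon,\delta}\to C\) in \(L^2\). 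This follows from the bias/variance split:
\[
\|C_{\varepsilon,\delta}-C\|_{L^2}^2=\mathrm{Var}(C_{\varepsilon,\delta})+(\mathbb{E}C_{\varepsilon,\delta}-C)^2 .
\]
Theorem~\ref{thm: asymptotic unbiasedness} controls the bias, and Theorem~\ref{thm: variance bound} controls the variance. The borderline \(H^*(m)=1/2\) is excluded as in Section~\ref{sec: consistency}.

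\textbf{Main obstacle.} Theorem~\ref{thm: variance bound} only treats \(G=c_mH_m\) on a finite chaos, whereas here \(G\) is a general element of \(L^2(\gamma)\). We plan to handle this by truncation. Write \(G=G_M+R_M\) with \(G_M=\sum_{q=m}^M c_qH_q\).
\begin{itemize}
\item For \(G_M\), the chaos expansion of Proposition~\ref{prop: chaos decomposition} extends: cross products \(H_pH_q\) with \(p\neq q\) generate additional kernels. Their second moments are dominated by the same diagonal/off-diagonal integrals with \(\rho^{p}\rho^{q}\) in place of \(\rho^{2m}\). Higher powers of \(\rho\) decay faster, so these are bounded by the rank-\(m\) estimates.
\item For the remainder, apply Cauchy--Schwarz to the quadratic form. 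This gives \(\|C^{G}-C^{G_M}\|_{L^2}\lesssim (\|C^{G}\|_{L^4}+\|C^{G_M}\|_{L^4})\cdot\) (a quantity proportional to \(\|R_M\|\) through the \(L^2\) estimate of the Proposition).
\item The needed \(L^4\) bounds come from hypercontractivity on each chaos of order \(\le M\). We expect this to be the delicate point, uniformity in \(M\) being the main difficulty. If it fails, we will settle for \(G\) with finite chaos expansion, or \(G\in L^p(\gamma)\) for some \(p>4\).
\end{itemize}

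\textbf{Necessity.} Suppose first that \(\varepsilon/\delta\not\to0\). Along a subsequence with \(\delta/\varepsilon\to c<\infty\), the first-order expansion of Proposition~\ref{prop: Hhat to 1} (Step 1) applies: the perturbation above is still negligible, while the rescaled expectation computed in the proofs of Proposition~\ref{prop: C to 0} and Theorem~\ref{thm: asymptotic unbiasedness} equals \((\delta/\varepsilon)^{-(2H'-1)}\cdot\varepsilon^{-1}\!\int_{[0,\delta]^2}R\). This differs from \(C^2\). When \(c=0\), Proposition~\ref{prop: C to 0} gives the limit \(0\), which again differs from \(C^2\).

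Suppose next that \(\delta\not\to 0\). Then \(N\) stays bounded, and \(C_{\varepsilon,\delta}\) converges in law to a finite sum of squared increments of the limit process. This limit has positive variance, so the estimator cannot be \(L^2\)-consistent for the constant \(C\).
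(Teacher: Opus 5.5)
Your sufficiency argument is the paper's own proof. It uses the same split of each increment into a drift piece bounded by \(\|h\|_\infty\|g\|_\infty\delta\) and the pure fluctuation increment. It uses the same deterministic bound on the drift-squared sum, the same Minkowski bound on the cross term via \(\|\Delta_iX^\varepsilon\|_{L^2}\asymp\delta^{H'}\), and the same reduction to the estimator built from \(X^\varepsilon\) alone. Your cross-term rate \(\delta^{1-H'}\) is the correct one. You are also right that Theorem~\ref{thm: variance bound} only covers \(G=c_mH_m\), which the paper passes over. For your truncation, note that the remainder needs \(L^4\) bounds on \(\alpha\int_0^\delta R_M(Y^\varepsilon_s)\,ds\) with the Breuer--Major scaling \(\delta^{H'}\); a plain Minkowski bound loses a positive power of \(\delta/\varepsilon\). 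That is where an assumption like \(G\in L^p(\gamma)\), \(p>4\), genuinely enters.

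\textbf{Gap in necessity.} The paper does not prove necessity at all, and your argument for it has a gap. Along a subsequence with \(\delta/\varepsilon\to c\in(0,\infty)\), stationarity gives \(\mathbb{E}[C_{\varepsilon,\delta}]=\phi(\delta/\varepsilon)\) with \(\phi(c)=c^{-2H'}\int_{[0,c]^2}R(s-t)\,ds\,dt\). You then simply assert \(\phi(c)\neq C^2\). That assertion is the entire content of the step, and for general \(G\) it is false. Take \(H^*(m)>1/2\) and \(G=c_mH_m+c_{m+1}H_{m+1}\).
\begin{itemize}
\item \(C^2\) depends only on \(c_m\).
\item \(\phi(1)\geq c_{m+1}^2(m+1)!\int_{[0,1]^2}\rho(s-t)^{m+1}\,ds\,dt\), since \(\rho>0\) for \(H>1/2\). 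This exceeds \(C^2\) once \(c_{m+1}\) is large.
\item \(\phi(c)\leq c^{2-2H'}R(0)\to 0\) as \(c\to0\).
\end{itemize}
By continuity there is \(c^*\in(0,1)\) with \(\phi(c^*)=C^2\). With \(\delta=c^*\varepsilon\), the fluctuation part of the estimator equals in law \((c^*)^{-2H'}N^{-1}\sum_{k<N}\xi_k^2\), where \(\xi_k=\int_{kc^*}^{(k+1)c^*}G(Y_s)\,ds\). This is a stationary sequence, mixing because \(\rho(s)\to0\), with \(\xi_0^2\in L^2\). By the mean ergodic theorem it converges in \(L^2\) to \(\phi(c^*)=C^2\). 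Your own bounds show that the drift and cross terms still vanish. So the estimator is \(L^2\)-consistent although \(\varepsilon/\delta\not\to0\). The `only if' direction therefore cannot be proved in the stated generality, not even for finite-chaos \(G\), so your fallback does not rescue it.

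\textbf{What can be proved.} For \(G=c_mH_m\) with \(H^*(m)>1/2\), write \(\int_{[0,c]^2}\rho^m\) through the spectral density \(f(\lambda)\propto|\lambda|^{1-2H}/(1+\lambda^2)\) of \(Y\). Substitute \(\lambda_i=\mu_i/c\) in the \(m\)-fold convolution. Then \(\phi(c)\) becomes the integral over \(\mu\in\mathbb{R}^m\) of a \(c\)-independent nonnegative kernel times \(\prod_{i=1}^m c^2/(c^2+\mu_i^2)\), which is strictly increasing in \(c\). Hence \(\phi(c)<C^2\) for every finite \(c\). Together with \(\phi(0^+)=0<C^2\), this closes your first necessity case there.

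For \(H^*(m)<1/2\) with \(H>1/2\), the identity \(\phi(c)=2\int_0^c(1-s/c)R(s)\,ds\) with \(R>0\) gives the same conclusion for every \(G\). For \(H<1/2\) a separate argument is needed, since \(\rho\) changes sign and \(\int_{\mathbb{R}}\rho=0\), so that \(C^2=0\) for \(G=H_1\).

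Your \(\delta\not\to0\) case is fine, provided you check that the limiting sum of squared increments of \(\bar x\), drift included, is non-degenerate.
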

\begin{proof}
Let \(f(\delta) = \delta^{(1-H^*(m))\vee 0}\). The increments may be written as \[x^{\varepsilon}_{u,t} = \int_u^t h(x_s^{\varepsilon})g(y_s^{\varepsilon})ds + \alpha(\varepsilon)\int_u^t G(y_s^{\varepsilon})ds.\]
Note that 
\begin{align}
C_{\varepsilon, \delta} =& f(N)\sum_{k=0}^{N-1}(x^{\varepsilon}_{k\delta, (k+1)\delta})^2 \\
=& f(\delta)\sum_{k=0}^{N-1}\left(\alpha(\varepsilon)\int_{k\delta}^{(k+1)\delta}G(y_s^{\varepsilon})ds\right)^2 + \\
& f(\delta)\sum_{k=0}^{N-1}
\left(\alpha(\varepsilon)\int_{k\delta}^{(k+1)\delta}G(y_s^{\varepsilon})ds\right)\left(\int_{k\delta}^{(k+1)\delta}g(x_s^{\varepsilon})h(y_s^{\varepsilon})ds\right) + \\
& f(\delta)\sum_{k=0}^{N-1}\left(\int_{k\delta}^{(k+1)\delta}g(x_s^{\varepsilon})h(y_s^{\varepsilon})ds\right)^2.
\end{align}
using the boundedness of \(h,g\) we have the deterministic bound 
\[\int_{k\delta}^{(k+1)\delta}g(x_s^{\varepsilon})h(y_s^{\varepsilon})ds \leq ||g||_{\infty}||h||_{\infty}\delta = K\delta\]
hence the last term is controlled by \(Kf(N)\delta=K\delta^{(2-2H^*(m))\vee 1}\). For the cross-term we have the \(L^2\) bound
\begin{align}
&f(N)\sum_{k=0}^{N-1}
\left(\alpha(\varepsilon)\int_{k\delta}^{(k+1)\delta}G(y_s^{\varepsilon})ds\right)\left(\int_{k\delta}^{(k+1)\delta}g(x_s^{\varepsilon})h(y_s^{\varepsilon})ds\right)\\
&\leq 2K\delta \left|\left|f(N)\sum_{k=0}^{N-1}\alpha(\varepsilon)\int_{k\delta}^{(k+1)\delta}G(y_s^{\varepsilon})ds\right|\right|_{L^2} \\ 
&\leq 2K f(N) \left|\left|\alpha(\varepsilon)\int_{k\delta}^{(k+1)\delta}G(y_s^{\varepsilon})ds\right|\right|_{L^2}\\
&\sim 2K\delta^{(1-H^*(m))\vee 1/2} 
\end{align}
which goes to zero as \(\delta\to 0\) for any \(H^*(m)\in(0,1)\). Therefore the consistency results hold true in this setting as well since you can do 
\begin{align}
||C_{\varepsilon, \delta} - C||_{L^2} \leq & \left|\left|f(N)\sum_{k=0}^{N-1}\left(\alpha(\varepsilon)\int_{k\delta}^{(k+1)\delta}G(y_s^{\varepsilon})ds\right)^2  - C\right|\right|_{L^2} +\\ 
& + \left|\left|f(N)\sum_{k=0}^{N-1}
\left(\alpha(\varepsilon)\int_{k\delta}^{(k+1)\delta}G(y_s^{\varepsilon})ds\right)\left(\int_{k\delta}^{(k+1)\delta}g(x_s^{\varepsilon})h(y_s^{\varepsilon})ds\right)\right|\right|_{L^2} \\
& + \left|\left|f(N)\sum_{k=0}^{N-1}\left(\int_{k\delta}^{(k+1)\delta}g(x_s^{\varepsilon})h(y_s^{\varepsilon})ds\right)^2 \right|\right|_{L^2}
\end{align}
the first term is exactly the same as in the previous sections and the other two go to zero as \(\delta\to 0\) as shown above.  
\end{proof}

\appendix
\section{Proofs of Consistency}
\label{sec: proofs of consistency}
\subsection{Case \(H^*(m)>1/2\)}
\begin{lemma}\label{lem: asymptotic variance m=1 H>1/2}
For \(m=1\) and \(H\in(1/2,1)\), the diagonal and off-diagonal contributions in \eqref{eq: second moment of chaos term} satisfy
\[
E_{2}^{\mathrm{Diag}} \sim \frac{\kappa_H^2}{H^2(2H-1)^2}\,\delta^{3+2(2H-2)}\varepsilon^{2(2-2H)},
\]
and
\[
E_{2}^{\mathrm{Off}} \sim
\begin{cases}
\kappa_H^2\,\zeta(2(2-2H))\,\delta^{3+2(2H-2)}\varepsilon^{2(2-2H)}, & H\in(1/2,3/4),\\[0.3em]
\kappa_H^2\,|\log(\delta)|\,\delta^{2}\varepsilon^{2(2-2H)}, & H=3/4,\\[0.3em]
\kappa_H^2\,a(H,1,0)\,\delta^{2}\varepsilon^{2(2-2H)}, & H\in(3/4,1).
\end{cases}
\]
\end{lemma}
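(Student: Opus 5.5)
For $m=1$, $r=0$, Corollary~\ref{cor: variance decomposition} reduces both quantities to integrals of $\rho$. After the change of variables to $[0,1]^4$, each summand is
\[
\delta^4\int_{[0,1]^4}\rho\bigl((u-s+k)\delta/\varepsilon\bigr)\,\rho\bigl((v-l+k)\delta/\varepsilon\bigr)\,du\,dv\,dl\,ds,
\]
where $k=|i-j|$. Here $k=0$ gives the diagonal term and $k\geq 1$ the off-diagonal terms. Since the integrand factorises in $(u,s)$ and $(v,l)$, the summand equals $\delta^4 F_k(\delta/\varepsilon)^2$, where
\[
F_k(\lambda):=\int_{[0,1]^2}\rho\bigl((u-s+k)\lambda\bigr)\,du\,ds=\int_{-1}^{1}(1-|x|)\,\rho\bigl((x+k)\lambda\bigr)\,dx.
\]
The whole lemma then reduces to the asymptotics of $F_k$ as $\lambda=\delta/\varepsilon\to\infty$, combined with the counting $\#\{(i,j):|i-j|=k\}=2(N-k)$ and $N=1/\delta$ (taking $T=1$).

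\textbf{Diagonal term.} For $k=0$, substitute $y=x\lambda$:
\[
F_0(\lambda)=\lambda^{-1}\int_{-\lambda}^{\lambda}\bigl(1-|y|/\lambda\bigr)\rho(y)\,dy.
\]
Since $2H-2>-1$, $\rho$ is not integrable, so \eqref{eq: rho asymptotic} gives the leading behaviour $\kappa_H\lambda^{2H-2}\int_{-1}^1(1-|x|)|x|^{2H-2}\,dx=\kappa_H\lambda^{2H-2}/(H(2H-1))$. The bounded region near the origin contributes only $O(\lambda^{-1})=o(\lambda^{2H-2})$. The $N$ diagonal summands then give
\[
E_2^{\mathrm{Diag}}\sim N\delta^4\,\frac{\kappa_H^2}{H^2(2H-1)^2}\,(\delta/\varepsilon)^{2(2H-2)},
\]
which is exactly $\delta^{3+2(2H-2)}\varepsilon^{2(2-2H)}$ times the claimed constant. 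This step is routine: it is a dominated-convergence argument on $\lambda^{2-2H}\rho(x\lambda)\to\kappa_H|x|^{2H-2}$ with the integrable majorant $K|x|^{2H-2}$.

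\textbf{Off-diagonal term.} For $k\geq1$ the argument $(x+k)\lambda\geq (k-1)\lambda$ blows up except when $k=1$ and $x$ is near $-1$. The plan is to show $F_k(\lambda)\sim\kappa_H\lambda^{2H-2}\phi_k$, uniformly enough in $k$, where $\phi_k:=\int_{-1}^1(1-|x|)|x+k|^{2H-2}\,dx$. For $k=1$ the singularity $|x+1|^{2H-2}$ at $x=-1$ is integrable, so dominated convergence still applies. For large $k$ one has $\phi_k=k^{2H-2}(1+O(k^{-2}))$, and the correction is uniform since $\rho(s)=\kappa_H s^{2H-2}(1+o(1))$ for large $s$. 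This yields
\[
E_2^{\mathrm{Off}}\sim 2\kappa_H^2\,\delta^4(\delta/\varepsilon)^{2(2H-2)}\sum_{k=1}^{N-1}(N-k)\,\phi_k^2 .
\]
The sum is then split according to the exponent $2(2H-2)$.
\begin{itemize}
\item For $H<3/4$, $\sum_k\phi_k^2$ converges, so the sum is $\sim N\sum_k\phi_k^2$. This gives order $\delta^{3+2(2H-2)}\varepsilon^{2(2-2H)}$ with a constant of zeta type, matching $\zeta(2(2-2H))$ once $\phi_k$ is replaced by $k^{2H-2}$ at leading order. I expect that exact constant to need care: the statement's $\zeta$ seemingly absorbs the $k=1$ correction, and I would check whether a factor 2 or the $\phi_k$ versus $k^{2H-2}$ discrepancy must be folded in.
\item For $H>3/4$, write $(N-k)k^{2(2H-2)}=N^{2+2(2H-2)}\cdot\tfrac1N(1-k/N)(k/N)^{2(2H-2)}$. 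A Riemann sum converging to $a(H,1,0)$ then gives total order $\delta^{4-2-2(2H-2)+2(2H-2)}\varepsilon^{2(2-2H)}=\delta^{2}\varepsilon^{2(2-2H)}$.
\item For $H=3/4$, the harmonic sum produces $N\log N$, hence the stated $|\log\delta|\,\delta^2\varepsilon^{2(2-2H)}$.
\end{itemize}

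The main obstacle is justifying the interchange of the limit $\lambda\to\infty$ with the growing sum over $k$ uniformly as $N\to\infty$. I would handle it by splitting at $k\le k_0$ versus $k>k_0$ for a fixed $k_0$, and on the tail bounding $|\lambda^{2-2H}\rho((x+k)\lambda)-\kappa_H|x+k|^{2H-2}|\le\eta\,|x+k|^{2H-2}$. This bound holds since $(x+k)\lambda\ge (k_0-1)\lambda\to\infty$. The tail error is then at most $\eta$ times the main sum, while the finitely many $k\le k_0$ are handled by dominated convergence. In the regime $H>3/4$ the small-$k$ terms are negligible relative to the Riemann sum, and in the regime $H<3/4$ they are part of the convergent series, so in both cases the split yields the claimed equivalences.
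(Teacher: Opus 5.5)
Your route is the paper's: factorise the $m=1$, $r=0$ summand as $\delta^4F_k(\delta/\varepsilon)^2$, insert $\rho(s)\sim\kappa_H s^{2H-2}$, and split according to whether $2(2H-2)$ lies below, at or above $-1$ (zeta series, harmonic sum, Riemann sum to $a(H,1,0)$). Your diagonal computation is correct and gives exactly the stated constant, since $\phi_0=1/(H(2H-1))$. All the orders are correct. Your handling of the interchange of $\lambda\to\infty$ with the growing $k$-sum is sound; the paper does not address it at all. For $H<3/4$ you can even drop the $k_0$ split: $\rho(s)\le K s^{2H-2}$ gives $\lambda^{2-2H}F_k(\lambda)\le K\phi_k$ with $\sum_k\phi_k^2<\infty$.

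What is missing is a decision about the off-diagonal constants. Here you should trust your own bookkeeping rather than try to fold it into $\zeta$: the stated constants cannot be proved, because they are wrong. Carried through, your argument gives
\[
E_2^{\mathrm{Off}}\sim
\begin{cases}
2\kappa_H^2\Bigl(\sum_{k\geq 1}\phi_k^2\Bigr)\,\delta^{3+2(2H-2)}\varepsilon^{2(2-2H)}, & H\in(1/2,3/4),\\
2\kappa_H^2\,|\log\delta|\,\delta^{2}\varepsilon^{2(2-2H)}, & H=3/4,\\
2\kappa_H^2\,a(H,1,0)\,\delta^{2}\varepsilon^{2(2-2H)}, & H\in(3/4,1),
\end{cases}
\]
where your $\phi_k$ equals $\bigl((k+1)^{2H}-2k^{2H}+(k-1)^{2H}\bigr)/(2H(2H-1))$.

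So the statement is off by a factor $2$ for $H\geq 3/4$, and has a different constant altogether for $H<3/4$. Already $\phi_1=(2^{2H}-2)/(2H(2H-1))>1$, and at $H=0.6$ one finds $2\sum_k\phi_k^2\approx 5.7$ against $\zeta(1.6)\approx 2.29$.

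The paper's proof reaches the stated values through two slips:
\begin{itemize}
\item It writes $\sum_{k=1}^{N-1}(N-k)$ for the ordered pairs $i\neq j$, dropping the factor $2$ that its own $m>1$ lemma keeps.
\item It replaces $\rho((x+k)\delta/\varepsilon)$ by $\kappa_H(k\delta/\varepsilon)^{2H-2}$ for every $k\geq 1$, i.e.\ $\phi_k$ by $k^{2H-2}$. This is harmless when the sum is carried by $k\asymp N$ ($H\geq 3/4$), but not when it is a convergent series dominated by small $k$ ($H<3/4$).
\end{itemize}

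Your constants also pass the natural sanity check. Note that $\phi_k=r_H(k)/(H(2H-1))$, where $r_H(k)=\tfrac12\bigl((k+1)^{2H}-2k^{2H}+(k-1)^{2H}\bigr)$ is the fractional Gaussian noise covariance. Hence $E_2^{\mathrm{Diag}}+E_2^{\mathrm{Off}}$ reproduces, after normalisation, the variance of the quadratic variation of the limiting fBM; for $H<3/4$ this is the Breuer--Major constant $\sum_{k\in\mathbb{Z}}r_H(k)^2$. For $H>3/4$ one gets $2\kappa_H^2a(H,1,0)=\kappa_H^2/((4H-3)(2H-1))$, which is exactly the paper's $D_{\star}(H,1)$.

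So state and prove the lemma with these corrected off-diagonal constants. The diagonal constant and all orders stand as claimed.
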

\begin{proof}[of Lemma \ref{lem: asymptotic variance m=1 H>1/2}]
\begin{align}
E_{2}^{\text{Off}} &= \sum_{i,j=0, i\neq j}^{N-1}\delta^4\int_{[0,1]^4}\rho((u-v)\delta/\varepsilon+|i-j|\delta/\varepsilon)\rho((l-s)\delta/\varepsilon + |i-j|\delta/\varepsilon)dudvdlds\\
&= \delta^4\int_{[0,1]^4}\left(\sum_{k=1}^{N-1}(N-k)\rho((u-v)\delta/\varepsilon+k\delta/\varepsilon)\rho((l-s)\delta/\varepsilon+k\delta/\varepsilon)\right)dudvdlds\\
&\sim \kappa_H^2 \delta^4\int_{[0,1]^4}\sum_{k=1}^{N-1}(N-k)|k\delta/\varepsilon|^{2(2H-2)}dudvdlds\\
&=\kappa_H^2 \delta^3\varepsilon^{2(2-2H)}\sum_{k=1}^{N-1}(1-k/N)(k/N)^{2(2H-2)}\\
\end{align}
Now if \(H^*(m)=H\in(1/2,3/4)\) then \(2(2H-2)<-1\) and we may use the asymptotics in Lemma \ref{lem: sum asymptotics} to get
\begin{align}
&\sim \delta^{3+2(2H-2)}\varepsilon^{2(2-2H)}\zeta(2(2-2H))\kappa_H^2.\\
\end{align}
If \(H\in(3/4,1)\) then \(2(2H-2)>-1\) and we get 
\begin{align}
&\sim \delta^{2}\varepsilon^{2(2-2H)}\left(\int_0^1(1-x)x^{2(2H-2)}dx\right)\kappa_H^2\\
&= \delta^{2}\varepsilon^{2(2-2H)}a(H,1,0)\kappa_H^2
\end{align}
In the borderline case \(H=3/4\) we have that \(2(2H-2)=-1\) and we get again by Lemma \ref{lem: sum asymptotics} that
\begin{align}
&\sim \delta^{2}\varepsilon^{2(2-2H)}|\log(\delta)|\kappa_H^2
\end{align}
Now for the diagonal terms we have 
\begin{align}
E_{2}^{\text{Diag}} &= \sum_{i=0}^{N-1}\delta^4\int_{[0,1]^4}\rho((u-v)\delta/\varepsilon)\rho((l-s)\delta/\varepsilon)dudvdlds\\
&= \delta^3(\varepsilon/\delta)^4\left(\int_{[0,\delta/\varepsilon]^2}\rho(u-v)dudv\right)^2\\
&\sim \f{\kappa_H^2}{H^2(2H-1)^2}\delta^{3+2(2H-2)}\varepsilon^{2(2-2H)}
\end{align}
\end{proof}
\begin{lemma}\label{lem: asymptotic variance m>1 H>1/2}
For \(m>1\) and \(H>1/2\), and any \(1\leq r\leq m-1\), the diagonal and off-diagonal contributions in \eqref{eq: second moment of chaos term} satisfy
\[
E_{2m-2r}^{\mathrm{Diag}} \sim \kappa_H^{2m}\,\delta^{3+2m(2H-2)}\varepsilon^{2m(2-2H)},
\]
and, with \(r_I\) defined as in \eqref{eq: rI rS def},
\[
E_{2m-2r}^{\mathrm{Off}} \sim
\begin{cases}
\dfrac{8\,a(H,m,r)\,\kappa_H^2}{(r(2H-2)+2)^2(r(2H-2)+1)^2}\,\delta^{2r(2H-2)+2}\,\varepsilon^{2m(2-2H)}, & r>r_I,\\[0.6em]
\dfrac{2\kappa_H^2}{(r(2H-2)+2)^2(r(2H-2)+1)^2}\,\delta^{2m(2H-2)+3}\,|\log(\delta)|\,\varepsilon^{2m(2-2H)}, & r=r_I,\\[0.6em]
\dfrac{8\,\zeta(-2(m-r)(2H-2))\,\kappa_H^2}{(r(2H-2)+2)^2(r(2H-2)+1)^2}\,\delta^{2m(2H-2)+3}\,\varepsilon^{2m(2-2H)}, & r<r_I.
\end{cases}
\]
\end{lemma}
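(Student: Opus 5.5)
The plan follows the template of the proof of Lemma~\ref{lem: asymptotic variance m=1 H>1/2}, working from the explicit formulas for $E^{\mathrm{Diag}}_{2m-2r}$ and $E^{\mathrm{Off}}_{2m-2r}$ in Corollary~\ref{cor: variance decomposition}. The key observation is that in both the diagonal and off-diagonal integrands the factors $\rho(\cdot)^r$ act on \emph{within-block} time differences, while the factors $\rho(\cdot)^{m-r}$ act on \emph{between-block} differences, shifted by $k\delta/\varepsilon$ with $k=|i-j|$.

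For $E^{\mathrm{Diag}}_{2m-2r}$ we have $k=0$. All four arguments then lie in $[-\delta/\varepsilon,\delta/\varepsilon]$, which is a large window since $\varepsilon/\delta\to 0$. Rescaling to $[0,\delta/\varepsilon]^4$ gives
\[
E^{\mathrm{Diag}}_{2m-2r}=N\varepsilon^4\int_{[0,\delta/\varepsilon]^4}\rho(u-v)^r\rho(l-s)^r\rho(u-s)^{m-r}\rho(v-l)^{m-r}.
\]
Since $H^*(m)>1/2$, the total power $2m(2H-2)>-2$ per pair makes the fourfold integral homogeneous of degree $4+2m(2H-2)$ at infinity. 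Replacing $\rho$ by $\kappa_H|\cdot|^{2H-2}$ (justified by \eqref{eq: rho asymptotic} together with local integrability of the power singularities) gives order $(\delta/\varepsilon)^{4+2m(2H-2)}$. With $N=\delta^{-1}$ this yields $\kappa_H^{2m}\delta^{3+2m(2H-2)}\varepsilon^{2m(2-2H)}$ times a finite constant.

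For $E^{\mathrm{Off}}_{2m-2r}$, I first rewrite the double sum as $\sum_{k=1}^{N-1}2(N-k)(\cdots)$. In each block, the within-block integrals $\int_{[0,1]^2}\rho((u-v)\delta/\varepsilon)^r$ are handled by Lemma~\ref{lem: integral asymptotics}.
\begin{itemize}
\item If $r(2H-2)>-1$, i.e.\ $r<r_I$, the integral behaves like $\kappa_H^r(\delta/\varepsilon)^{r(2H-2)}\cdot\frac{2}{(r(2H-2)+1)(r(2H-2)+2)}$. This is where the squared denominators come from.
\item If $r>r_I$, the integral is of order $\varepsilon/\delta$.
\end{itemize}
For the between-block factors with $k\geq1$, the arguments are large, and $\rho^{m-r}$ is replaced by $\kappa_H^{m-r}(k\delta/\varepsilon)^{(m-r)(2H-2)}$, with uniform errors controlled by dominated convergence. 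This reduces everything to the Riemann-type sum $\delta^{\cdot}\sum_k(1-k/N)(k/N)^{2(m-r)(2H-2)}$. By Lemma~\ref{lem: sum asymptotics}, this sum is:
\begin{itemize}
\item a Riemann integral giving $a(H,m,r)$ when $2(m-r)(2H-2)>-1$;
\item a $\zeta$-value when the exponent is below $-1$;
\item $|\log\delta|$ at equality.
\end{itemize}
After the powers of $\delta,\varepsilon$ are collected, the three displayed cases follow by bookkeeping. I would match the trichotomy of the statement to these cases and check that the exponents reduce to $2r(2H-2)+2$ and $2m(2H-2)+3$.

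I expect the main obstacle to be the rigorous replacement of $\rho$ by its power asymptotic inside the four-fold integrals, in two places:
\begin{itemize}
\item near $k=1$, where $k\delta/\varepsilon+(u-s)\delta/\varepsilon$ can be small when $u-s\approx -1$;
\item in the diagonal term, where the singularities meet.
\end{itemize}
My first approach would be to split off a neighbourhood where the argument is below a fixed $s_0$, bound $\rho\le1$ there, and show that this piece is of lower order. I would use $\rho(s)\le K(1\wedge|s|^{2H-2})$, so the error in each regime is $o$ of the main term by dominated convergence on the rescaled variables.
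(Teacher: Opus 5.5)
Your off-diagonal route is the paper's: sum over \(k=|i-j|\), replace the between-block factors by a power of \(k\delta/\varepsilon\), factor into the squared within-block integral times \(S_N:=\frac1N\sum_{k=1}^{N-1}(1-k/N)(k/N)^{2(m-r)(2H-2)}\), then apply Lemmas~\ref{lem: integral asymptotics} and~\ref{lem: sum asymptotics}. However, your case analysis is set up wrongly, and the final ``matching'' step would fail on the first line of the statement.

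The lemma lives in the regime \(H^*(m)>1/2\), which you also invoke, i.e.\ \(m(2H-2)>-1\). Hence every \(1\le r\le m-1\) has \(r(2H-2)>m(2H-2)>-1\). So the within-block integral is \emph{always} in the non-integrable case of Lemma~\ref{lem: integral asymptotics}. Your branch ``\(r>r_I\): order \(\varepsilon/\delta\)'' never occurs, and \(r_I\) read literally from \eqref{eq: rI rS def} is the minimum of an empty set. This is why the squared denominators appear in all three lines of the statement, including the \(a(H,m,r)\) line, and not only ``for \(r<r_I\)''.

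The statement's trichotomy is that of the Riemann-type sum: \(2(m-r)(2H-2)>-1\), \(=-1\), \(<-1\). That is how the paper's proof actually uses \(r_I\) (``\(r=r_I\) meaning that \(2(m-r)(2H-2)=-1\)''). If you pair the line \(r>r_I\) with an integrable within-block factor, you get \(\varepsilon^{2(m-r)(2-2H)+2}\) and no squared denominators, not the claimed formula.

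With the correct reading, insert
\[
\int_{[0,1]^2}\rho((u-v)\delta/\varepsilon)^r\,du\,dv\sim \frac{2\kappa_H^r(\delta/\varepsilon)^{r(2H-2)}}{(r(2H-2)+2)(r(2H-2)+1)},
\]
valid for every such \(r\), into \(2\kappa_H^{2(m-r)}\delta^2\varepsilon^{2(m-r)(2-2H)}S_N\bigl(\int_{[0,1]^2}\rho((u-v)\delta/\varepsilon)^r\,du\,dv\bigr)^2\). This gives
\[
\frac{8\kappa_H^{2m}\,S_N}{(r(2H-2)+2)^2(r(2H-2)+1)^2}\,\delta^{2r(2H-2)+2}\varepsilon^{2m(2-2H)} .
\]
Then \(S_N\to a(H,m,r)\), \(S_N\sim|\log\delta|\), and \(S_N\sim\zeta(-2(m-r)(2H-2))\,\delta^{2(m-r)(2H-2)+1}\) produce the three exponents; the constant in the last line is discussed below. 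The prefactor comes out as \(\kappa_H^{2m}\) (not \(\kappa_H^2\)) in every line, and as \(8\) (not \(2\)) in the logarithmic line. This is consistent with \(D_\star\) in \eqref{eq: D star}, so do not try to match those constants literally.

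Second, you claim that replacing \(\rho\bigl((u-s+k)\delta/\varepsilon\bigr)^{m-r}\) by \(\kappa_H^{m-r}(k\delta/\varepsilon)^{(m-r)(2H-2)}\) costs only \(o(\text{main term})\). This holds when \(2(m-r)(2H-2)\ge-1\): bounded \(k\) then carry only a fraction \(O(N^{-1-2(m-r)(2H-2)})\), resp.\ \(O(1/\log N)\), of the sum. It fails when \(2(m-r)(2H-2)<-1\). There \(\sum_k(N-k)k^{2(m-r)(2H-2)}\approx N\zeta(\cdot)\) is carried by \(k=O(1)\), where \(k+u-s\) with \(u-s\in[-1,1]\) is not close to \(k\), so the four variables do not decouple.

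Dominated convergence in that regime gives the stated order \(\delta^{2m(2H-2)+3}\varepsilon^{2m(2-2H)}\), but with constant
\[
2\kappa_H^{2m}\sum_{k\ge1}\int_{[0,1]^4}|u-v|^{r(2H-2)}|l-s|^{r(2H-2)}(k+u-s)^{(m-r)(2H-2)}(k+v-l)^{(m-r)(2H-2)},
\]
not \(\zeta(-2(m-r)(2H-2))\) times the squared within-block constant. Your splitting at \(s_0\) does not repair this. The paper's proof makes the same replacement, so on that line only the order is actually established.

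On the diagonal, by contrast, your route differs from the paper's and is the better one. The paper only gets the upper bound \(E^{\mathrm{Diag}}_{2m-2r}\le\delta^3\bigl(\int_{[0,1]^2}\rho((u-v)\delta/\varepsilon)^m\,du\,dv\bigr)^2\) from a H\"older-type inequality. Your rescaling plus dominated convergence gives a genuine equivalence. The power-law dominator is integrable because, for fixed \(u,l\), the \(v\)- and \(s\)-integrals are bounded when \(m(2H-2)>-1\). The resulting constant is \(\kappa_H^{2m}\int_{[0,1]^4}|u-v|^{r(2H-2)}|l-s|^{r(2H-2)}|u-s|^{(m-r)(2H-2)}|v-l|^{(m-r)(2H-2)}\). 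That integral exceeds \(1\), since each factor is \(\ge1\) on \([0,1]^4\). So your ``times a finite constant'' is the correct reading of the diagonal line.
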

\begin{proof}[of Lemma \ref{lem: asymptotic variance m>1 H>1/2}]
\begin{align}
    \label{eq: sum off diagonals m>1 1}
E_{2m-2r}^{\text{Off}}=&\delta^4\int_{[0,1]^4}\rho((u-v)\delta/\varepsilon)^r\rho((l-s)\delta/\varepsilon)^r \left(\sum_{i,j=0, i\neq j}^{N-1}\rho((u-s)\delta/\varepsilon+|i-j|\delta/\varepsilon)^{m-r}\right.\\
&\left.\rho((v-l)\delta/\varepsilon+|i-j|\delta/\varepsilon)^{m-r}\right)dudvdlds\\
=& \delta^4\int_{[0,1]^4}\rho((u-v)\delta/\varepsilon)^r\rho((l-s)\delta/\varepsilon)^r2 \left(\sum_{k=1}^{N-1}(N-k)\rho((u-s)\delta/\varepsilon+k\delta/\varepsilon)^{m-r}\right. \\ 
&\left. \rho((v-l)\delta/\varepsilon+k\delta/\varepsilon)^{m-r}\right)dudvdlds
\end{align}
in the limit, \(u-s\) and \(v-l\) are negligible in front of \(k\) except for finitely many values of k. Therefore, for small enough \(\varepsilon\)
\begin{multline}
    \label{eq: sum off diagonals m>1 2}
\sim \delta^4\int_{[0,1]^4}\rho((u-v)\delta/\varepsilon)^r\rho((l-s)\delta/\varepsilon)^r2\left(\sum_{k=1}^{N-1}(N-k)|k\delta/\varepsilon|^{2(m-r)(2H-2)}\right)dudvdlds\\
\sim \delta^2\varepsilon^{2(m-r)(2-2H)}\int_{[0,1]^4}\rho((u-v)\delta/\varepsilon)^r\rho((l-s)\delta/\varepsilon)^r\\ 
2\left(\f{1}{N}\sum_{k=1}^{N-1}(1-k/N)|k/N|^{2(m-r)(2H-2)}\right)dudvdlds
\end{multline}
For \(r\geq m/2\) it is clear that \(2(m-r)(2H-2)>-1\) whenever \(H^*(m)>1/2\) and \(m>1\). Note that \(H^*(m)>1/2\) is equivalent to \(H>1-1/(2m)\) hence \(2(m-r)(2H-2)>m(2H-2)>-1\)
and the Riemann sum diverges in these cases. For the values \(r<m/2\) we cannot conclude in general; with \(r_I\) defined as in \eqref{eq: rI rS def}, the divergent regime is exactly \(r>r_I\). For these terms we get from Lemma \ref{lem: sum asymptotics} that
\begin{align}
&\sim 
\begin{multlined}[t]
2\delta^2\varepsilon^{2(m-r)(2-2H)}\int_{[0,1]^4}\rho((u-v)\delta/\varepsilon)^r\rho((l-s)\delta/\varepsilon)^r \\ 
\left(\int_0^1(1-x)x^{2(m-r)(2H-2)}\right)dudvdlds
\end{multlined}\\
&\sim 2a(H,m,r)\delta^2\varepsilon^{2(m-r)(2-2H)}\left(\int_{[0,1]^2}\rho((u-v)\delta/\varepsilon)^r dudv\right)^2\\
&= 2a(H,m,r)\delta^{-2}\varepsilon^{2(m-r)(2-2H)+4}\left(\int_{[0,\delta/\varepsilon]^2}\rho(u-v)^r dudv\right)^2
\end{align}
Clearly \(r(2H-2)<-1\) whenever \(1\leq r\leq m-1\) and \(H^*(m)>1/2\). Therefore we may use Lemma \ref{lem: integral asymptotics} to get the asymptotics
\begin{align}
&\sim \frac{8a(H,m,r)\kappa_H^2}{(r(2H-2)+2)^2(r(2H-2)+1)^2}\delta^{-2}\varepsilon^{2(m-r)(2-2H)+4}(\delta/\varepsilon)^{2r(2H-2)+4}\\
&=\frac{8a(H,m,r)\kappa_H^2}{(r(2H-2)+2)^2(r(2H-2)+1)^2}\delta^{2r(2H-2)+2}\varepsilon^{2m(2-2H)}
\end{align}
For the cases where \(r<r_I\) (may be none) we have that by using the appropriate case in Lemma \ref{lem: sum asymptotics} instead that the sum is asymptotically bounded 
by 
\begin{equation}
\sim \frac{8\zeta(-2(m-r)(2H-2))\kappa_H^2}{(r(2H-2)+2)^2(r(2H-2)+1)^2}\delta^{2m(2H-2)+3}\varepsilon^{2m(2-2H)} .
\end{equation}
If instead \(r=r_I\) meaning that \(2(m-r)(2H-2)=-1\) we get that the sum is asymptotically equivalent to 
\begin{equation}
\sim \frac{2 \kappa_H^2}{(r(2H-2)+2)^2(r(2H-2)+1)^2}\delta^{2m(2H-2)+3}|\log(\delta)|\varepsilon^{2m(2-2H)}
\end{equation}
The sum of diagonal terms may be written as 
\begin{align}
    \label{eq: sum of diagonal m>1}
&E_{2m-2r}^{\text{Diag}} \\ 
&= \delta^3 \int_{[0,1]^4}\rho((u-v)\delta/\varepsilon)^r\rho((l-s)\delta/\varepsilon)^r\rho((u-s)\delta/\varepsilon)^{m-r}\rho((v-l)\delta/\varepsilon)^{m-r}dudvdsdl\\
&\leq\delta^3\left(\int_{[0,1]^2}\rho((u-v)\delta/\varepsilon)^m dudv\right)^{2r/m}\left(\int_{[0,1]^2}\rho((u-v)\delta/\varepsilon)^m dudv\right)^{2(m-r)/m}\\
&\sim \kappa_H^{2m} \delta^3(\varepsilon/\delta)^4 (\delta/\varepsilon)^{2m(2H-2)+4}\\
&= \kappa_H^{2m} \delta^{3+2m(2H-2)}\varepsilon^{2m(2-2H)}
\end{align}
\end{proof}
\subsection{Case \(H^*(m)<1/2\)}
\begin{lemma}\label{lem: second chaos m=1 H<1/2}
For \(m=1\) and \(H<1/2\), the diagonal and off-diagonal contributions in \eqref{eq: second moment of chaos term} satisfy
\[
E_{2}^{\mathrm{Off}} \sim \kappa_H^2\,\zeta(2(2-2H))\,\delta^{3+2(2H-2)}\varepsilon^{2(2-2H)},\qquad
E_{2}^{\mathrm{Diag}} \sim 4\Bigl(\int_0^{\infty}\rho(s)\,ds\Bigr)^2 \delta\,\varepsilon^{2}.
\]
\end{lemma}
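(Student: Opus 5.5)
I would follow the template of the proof of Lemma~\ref{lem: asymptotic variance m=1 H>1/2}, treating the off-diagonal and diagonal sums separately. The only structural change is that for \(H<1/2\) the covariance \(\rho\) is integrable on \([0,\infty)\), since \(2H-2<-1\), and \(\kappa_H<0\). The latter is harmless because only \(\kappa_H^2\) appears.

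\textbf{Off-diagonal term.} With \(m=1\), \(r=0\), I start from the expression in Corollary~\ref{cor: variance decomposition}. I group pairs \((i,j)\) by \(k=|i-j|\), which gives
\[E_2^{\mathrm{Off}}=2\delta^4\int_{[0,1]^4}\sum_{k=1}^{N-1}(N-k)\,\rho\bigl((u-s+k)\delta/\varepsilon\bigr)\rho\bigl((v-l+k)\delta/\varepsilon\bigr)\,du\,dv\,dl\,ds.\]
Since \(\delta/\varepsilon\to\infty\), for every \(k\geq 2\) the arguments are uniformly large. Hence by \eqref{eq: rho asymptotic} each product is asymptotically \(\kappa_H^2(k\delta/\varepsilon)^{2(2H-2)}\), uniformly in \(u,v,l,s\).

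The term \(k=1\) needs separate care, because \(u-s+1\) can approach \(0\). There I would bound \(|\rho|\le 1\) near the corner and use integrability of \(\rho\) on the rest. This gives a contribution of order \(N\delta^4(\varepsilon/\delta)\cdot(\varepsilon/\delta)^{2-4H}\) or smaller, which I would check is \(o\) of the main term.

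The remaining sum is \(\kappa_H^2\delta^3\varepsilon^{2(2-2H)}\delta^{2(2H-2)}\sum_k(1-k/N)k^{2(2H-2)}\). Since \(2(2H-2)<-1\), this series converges. By dominated convergence (the case \(2(2H-2)<-1\) of Lemma~\ref{lem: sum asymptotics}) the sum tends to \(\zeta(2(2-2H))\), which yields the stated order \(\delta^{3+2(2H-2)}\varepsilon^{2(2-2H)}\). The factor \(2\) from ordered pairs I would absorb exactly as in Lemma~\ref{lem: asymptotic variance m=1 H>1/2}.

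\textbf{Diagonal term.} Here the integrand factorises as in the \(H>1/2\) proof:
\[E_2^{\mathrm{Diag}}=N\delta^4\Bigl(\int_{[0,1]^2}\rho((u-v)\delta/\varepsilon)\,du\,dv\Bigr)^2=\delta^3(\varepsilon/\delta)^4\Bigl(\int_{[0,\delta/\varepsilon]^2}\rho(u-v)\,du\,dv\Bigr)^2.\]
Writing the inner integral as \(2\int_0^{T}(T-s)\rho(s)\,ds\) with \(T=\delta/\varepsilon\to\infty\), integrability of \(\rho\) gives \(2T\int_0^\infty\rho+O(1+T^{2H})=2T\int_0^\infty\rho\,(1+o(1))\). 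This uses the same tail estimate as in the case \(H^*(m)<1/2\) of Theorem~\ref{thm: asymptotic unbiasedness}, and requires \(\int_0^\infty\rho\neq 0\), which I would verify from the spectral density of the fOU at frequency zero. Squaring gives \(\delta^3(\varepsilon/\delta)^4\cdot4(\delta/\varepsilon)^2(\int_0^\infty\rho)^2=4(\int_0^\infty\rho)^2\delta\varepsilon^2\).

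The main obstacle is justifying the replacement of \(\rho\) by its power-law tail inside the \(k\)-sum, and in particular controlling the \(k=1\) boundary term where the shift is not large. I would handle it by splitting into the region where \(|u-s+1|\delta/\varepsilon>M\) and its complement, then letting \(M\to\infty\) after \(\varepsilon\to0\).
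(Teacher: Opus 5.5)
Your plan follows the paper's proof essentially line by line. However, two of the checks you defer would fail, and they fail because the lemma's constants are not correct as stated. The paper's proof has the same defects.

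\textbf{Diagonal term.} Your argument needs \(\int_0^\infty\rho\neq 0\), and the spectral check you propose shows the opposite. For \(H<1/2\) the fOU spectral density is proportional to \(|\lambda|^{1-2H}/(1+\lambda^2)\), which vanishes at \(\lambda=0\). More elementarily, the Langevin equation at \(\varepsilon=1\) gives \(\int_0^T Y_s\,ds=Y_0-Y_T+\sigma B^H_T\), hence
\[
\int_{[0,T]^2}\rho(u-v)\,du\,dv=\mathrm{Var}\Bigl(\int_0^T Y_s\,ds\Bigr)=\sigma^2T^{2H}+O(T^H)=o(T).
\]
Since \(\rho\in L^1\) here, dividing by \(T\) and using dominated convergence forces \(\int_0^\infty\rho=0\). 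So the right-hand side of the claimed diagonal equivalence is identically zero, while \(E_2^{\mathrm{Diag}}>0\). The \(O(1+T^{2H})\) remainder you discard is in fact the leading term. With \(T=\delta/\varepsilon\) one gets
\[
E_2^{\mathrm{Diag}}\sim\sigma^4\,\delta^{3+2(2H-2)}\varepsilon^{2(2-2H)}=\frac{\kappa_H^2}{H^2(2H-1)^2}\,\delta^{3+2(2H-2)}\varepsilon^{2(2-2H)}.
\]
This is the same formula as in the \(H>1/2\) lemma, and it is \(o(\delta\varepsilon^2)\).

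\textbf{Off-diagonal term.} For \(k\ge 2\) the arguments of \(\rho\) are indeed large. But the tail of \(\rho\) gives \(\kappa_H^2\bigl((k+u-s)(k+v-l)\bigr)^{2H-2}(\delta/\varepsilon)^{2(2H-2)}\), not \(\kappa_H^2(k\delta/\varepsilon)^{2(2H-2)}\). The offsets \(u-s\in(-1,1)\) are negligible only as \(k\to\infty\). Since \(\sum_k k^{2(2H-2)}\) converges, bounded \(k\) carry the entire leading term, so this replacement changes the constant.

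The \(k=1\) term is likewise not \(o\) of the main term. Your own bound \(\delta^3(\varepsilon/\delta)^{3-4H}\) is larger than the main term \(\delta^3(\varepsilon/\delta)^{4-4H}\), and the \(k=1\) term is in fact of exactly the main order. The factor \(2\) also cannot be absorbed, since \(E_2^{\mathrm{Off}}=2\sum_{k=1}^{N-1}(N-k)\langle g_0,g_k\rangle\).

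If you carry out your \(M\)-splitting honestly, and apply dominated convergence in \(k\) (the normalised summands are bounded by \(K k^{2(2H-2)}\)), you obtain
\[
E_2^{\mathrm{Off}}\sim 2\kappa_H^2\Bigl(\sum_{k\ge1}c_k^2\Bigr)\delta^{3+2(2H-2)}\varepsilon^{2(2-2H)},\qquad c_k=\int_{[0,1]^2}(k+x-y)^{2H-2}\,dx\,dy=\frac{(k+1)^{2H}-2k^{2H}+(k-1)^{2H}}{2H(2H-1)}.
\]
By Jensen, \(c_k>k^{2H-2}\), so this constant is strictly larger than \(\kappa_H^2\zeta(2(2-2H))\). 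The paper's proof makes the same ``\(u-s\) negligible in front of \(k\)'' replacement.

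What your argument does establish is \(E_2^{\mathrm{Off}}\asymp\delta^{3+2(2H-2)}\varepsilon^{2(2-2H)}\) and \(E_2^{\mathrm{Diag}}=o(\delta\varepsilon^2)\). These upper-bound orders are all that the subsequent variance bounds for \(H^*(m)<1/2\) actually use.
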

\begin{proof}[of Lemma \ref{lem: second chaos m=1 H<1/2}]
\begin{align}
E_{2}^{\text{Off}} &=\sum_{i,j=0, i\neq j}^{N-1}\left\langle\ g_{i}^{1,0,\varepsilon},g_{j}^{1,0,\varepsilon}\right\rangle_{L^2(\mathbb{R}^{2})}\\
&= \sum_{i,j=0, i\neq j}^{N-1}\delta^4\int_{[0,1]^4}\rho((u-v)\delta/\varepsilon+|i-j|\delta/\varepsilon)\rho((l-s)\delta/\varepsilon + |i-j|\delta/\varepsilon)dudvdlds\\
&= \delta^4\int_{[0,1]^4}\left(\sum_{k=1}^{N-1}(N-k)\rho((u-v)\delta/\varepsilon+k\delta/\varepsilon)\rho((l-s)\delta/\varepsilon+k\delta/\varepsilon)\right)dudvdlds\\
&\sim \kappa_H^2 \delta^4\int_{[0,1]^4}\sum_{k=1}^{N-1}(N-k)|k\delta/\varepsilon|^{2(2H-2)}dudvdlds\\
&=\kappa_H^2 \delta^3\varepsilon^{2(2-2H)}\sum_{k=1}^{N-1}(1-k/N)(k/N)^{2(2H-2)}\\
\end{align}
Now if \(H^*(m)<1/2\) then \(2(2H-2)<-2\) and we may use the asymptotics in Lemma \ref{lem: sum asymptotics} to get
\begin{align}
&\sim \delta^{3+2(2H-2)}\varepsilon^{2(2-2H)}\zeta(2(2-2H))\kappa_H^2\\
\end{align}
Now for the diagonal terms we have 
\begin{align}
E_{2}^{\text{Diag}} &=\sum_{i=0}^{N-1}\left\langle\ g_{i}^{1,0,\varepsilon},g_{i}^{1,0,\varepsilon}\right\rangle_{L^2(\mathbb{R}^{2})}\\
&= \sum_{i=0}^{N-1}\delta^4\int_{[0,1]^4}\rho((u-v)\delta/\varepsilon)\rho((l-s)\delta/\varepsilon)dudvdlds\\
&= \delta^3(\varepsilon/\delta)^4\left(\int_{[0,\delta/\varepsilon]^2}\rho(u-v)dudv\right)^2\\
&\sim 4\left(\int_{0}^{\infty}\rho(s)ds\right)^2 \delta \varepsilon^{2}
\end{align}
\end{proof}
\begin{lemma}\label{lem: asymptotics m>1 H<1/2}
For \(m>1\) and \(H^*(m)<1/2\), and any \(1\leq r\leq m-1\), with \(r_I, r_S\) as in \eqref{eq: rI rS def}, the diagonal and off-diagonal contributions in \eqref{eq: second moment of chaos term} satisfy
\[
E_{2m-2r}^{\mathrm{Diag}} \lesssim 4\Bigl(\int_0^{\infty}\rho(s)^m\,ds\Bigr)^2 \delta\,\varepsilon^{2},
\]
and
\[
E_{2m-2r}^{\mathrm{Off}} \lesssim
\begin{cases}
\delta^{3}(\varepsilon/\delta)^{2m(2-2H)}, & r< r_I\wedge r_S,\\[0.3em]
\varepsilon^{2(m-r)(2-2H)+2}, & r> r_I\vee r_S,\\[0.3em]
\delta^{2+2(m-r)(2H-2)}(\varepsilon/\delta)^{2m(2-2H)}, & r_S< r < r_I,\\[0.3em]
\delta\,\varepsilon^{2}(\varepsilon/\delta)^{2(m-r)(2-2H)}, & r_I< r < r_S,
\end{cases}
\]
with implicit constants depending only on \(H, m\) and \(r\).
\end{lemma}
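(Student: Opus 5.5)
I will follow the template of Lemmas~\ref{lem: asymptotic variance m>1 H>1/2} and~\ref{lem: second chaos m=1 H<1/2}, treating the diagonal and off-diagonal sums of Corollary~\ref{cor: variance decomposition} separately. For the diagonal term I apply H\"older's inequality to the fourfold integral exactly as in \eqref{eq: sum of diagonal m>1}, with exponents $m/r$ on the factors $\rho^r$ and $m/(m-r)$ on the factors $\rho^{m-r}$. This bounds $E^{\mathrm{Diag}}_{2m-2r}$ by
\[
N\delta^4\Bigl(\int_{[0,1]^2}\rho((u-v)\delta/\varepsilon)^m\,du\,dv\Bigr)^2
=\delta^3(\varepsilon/\delta)^4\Bigl(\int_{[0,\delta/\varepsilon]^2}\rho(u-v)^m\,du\,dv\Bigr)^2 .
\]
Since $H^*(m)<1/2$ means $m(2H-2)<-1$, the function $\rho^m$ is integrable on $[0,\infty)$. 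The inner double integral is then $\sim 2(\delta/\varepsilon)\int_0^\infty\rho^m$, which gives the claimed bound $4(\int_0^\infty\rho^m)^2\delta\varepsilon^2$. Here I use $|\rho|$ where the sign of $\rho$ could matter.

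For the off-diagonal term I rewrite the double sum as $2\sum_{k=1}^{N-1}(N-k)(\cdots)$. I replace $\rho((u-s)\delta/\varepsilon+k\delta/\varepsilon)^{m-r}$, and likewise the $(v-l)$ factor, by $\kappa_H^{m-r}(k\delta/\varepsilon)^{(m-r)(2H-2)}$. This step is only justified as an upper bound up to constants. It is valid for $k\ge 2$ using \eqref{eq: rho asymptotic} and $\delta/\varepsilon\to\infty$. The $k=1$ term needs separate handling because $u-s$ may approach $-1$; I would absorb it into the diagonal-type bound by H\"older. After this replacement the expression factorises as
\[
E^{\mathrm{Off}}_{2m-2r}\lesssim \delta^2\varepsilon^{2(m-r)(2-2H)}\Bigl(\int_{[0,1]^2}\rho((u-v)\delta/\varepsilon)^r\,du\,dv\Bigr)^2\,\frac1N\sum_{k=1}^{N-1}(1-k/N)(k/N)^{2(m-r)(2H-2)} .
\]
I then estimate the two factors independently.

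The first factor is the integral factor. By Lemma~\ref{lem: integral asymptotics} it is of order $(\varepsilon/\delta)$ when $r>r_I$, since $\rho^r$ is integrable. When $r<r_I$ it is of order $(\delta/\varepsilon)^{r(2H-2)}$. Squaring gives $(\varepsilon/\delta)^2$ or $(\delta/\varepsilon)^{2r(2H-2)}$ respectively.

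The second factor is the Riemann sum, handled by Lemma~\ref{lem: sum asymptotics}. When $r>r_S$, that is $2(m-r)(2H-2)>-1$, it converges to $a(H,m,r)$. When $r<r_S$ it is dominated by small $k$ and is of order $N^{-1-2(m-r)(2H-2)}\zeta(\cdot)=\delta^{1+2(m-r)(2H-2)}$.

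Multiplying the appropriate pair in each of the four regimes gives the table. For example, with $r>r_I\vee r_S$ I get $\delta^2\varepsilon^{2(m-r)(2-2H)}(\varepsilon/\delta)^2=\varepsilon^{2(m-r)(2-2H)+2}$. With $r<r_I\wedge r_S$ I get
\[
\delta^{3+2(m-r)(2H-2)}\varepsilon^{2(m-r)(2-2H)}(\delta/\varepsilon)^{2r(2H-2)}=\delta^3(\varepsilon/\delta)^{2m(2-2H)}.
\]
The two mixed cases are checked the same way. Proposition~\ref{prop: rI rS comparison} tells me which mixed case can actually occur.

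The main obstacle is the sign and uniformity issue. For $H<1/2$ we have $\kappa_H<0$, and $\rho$ may change sign, so I would bound everything through $|\rho|$, using $|\rho(s)|\lesssim 1\wedge s^{2H-2}$. This gives only $\lesssim$, not $\sim$, which is all the statement claims. I would also need to check that the shift by $u-s\in(-1,1)$ does not destroy the power-law comparison for small $k$. I plan to do this by comparing $k+(u-s)$ with $k$ for $k\ge 2$ and treating $k=1$ separately as above.
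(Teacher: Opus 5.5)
Your route is the paper's: H\"older on the grouped factors \(\rho(u-v)^r\rho(l-s)^r\) and \(\rho(u-s)^{m-r}\rho(v-l)^{m-r}\) for the diagonal, which is correct once you use \(|\rho|\). For the off-diagonal part you then replace the cross factors by power laws and multiply the outputs of Lemmas~\ref{lem: integral asymptotics} and~\ref{lem: sum asymptotics}. For \(k\ge 2\) this replacement is a legitimate upper bound, because \(u-s+k\ge k/2\).

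\textbf{The gap is the adjacent-block term \(k=1\).} Your H\"older absorption bounds it by \(2(N-1)\delta^4\bigl(\int_{[0,1]^2}|\rho(h(u-v))|^m\bigr)^{2r/m}\bigl(\int_{[0,1]^2}|\rho(h(x-y+1))|^m\bigr)^{2(m-r)/m}\lesssim\delta\varepsilon^2\), with \(h=\delta/\varepsilon\). So what you actually prove is \(E^{\mathrm{Diag}}_{2m-2r}+E^{\mathrm{Off}}_{2m-2r}\lesssim\delta\varepsilon^2+(\text{table entry})\), not the off-diagonal table. In the cases \(r<r_I\wedge r_S\) and \(r_I<r<r_S\), the entries \(\delta^3(\varepsilon/\delta)^{2m(2-2H)}\) and \(\delta\varepsilon^2(\varepsilon/\delta)^{2(m-r)(2-2H)}\) are \(o(\delta\varepsilon^2)\). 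These are exactly the cases where the Riemann sum diverges, and for \(2(m-r)(2H-2)<-1\) it is carried by \(k=O(1)\). So \(k=1\) is a leading term there, not a correction that can be absorbed. The paper's proof has the same hole: it discards the ``finitely many values of \(k\)'' where \(u-s\) is not negligible, and here those are the dominant ones. In the other two cases, \(\delta\varepsilon^2\) and the table entry are not comparable for every admissible \(\delta(\varepsilon)\), so a finer \(k=1\) estimate would be needed there too.

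\textbf{In cases \(r<r_I\wedge r_S\) and \(r_I<r<r_S\), no finer estimate can help, because the entries are false in part of their range.}
\begin{itemize}
\item Take \(H>1/2\). Then \(\rho>0\), and \(\rho\ge c_0>0\) on \([0,2]\).
\item Restrict \(\langle g_i,g_{i-1}\rangle\) to \(u,v\in[i\delta,i\delta+\varepsilon]\) and \(l,s\in[i\delta-\varepsilon,i\delta]\). All arguments of \(\rho^\varepsilon\) are then at most \(2\varepsilon\), so \(\langle g_i,g_{i-1}\rangle\ge c_0^{2m}\varepsilon^4\).
\item All off-diagonal terms are nonnegative, hence \(E^{\mathrm{Off}}_{2m-2r}\ge 2(N-1)c_0^{2m}\varepsilon^4\asymp\delta^3(\varepsilon/\delta)^4\).
\item For \(m=3\), \(H=3/5\), \(r=1\): \(r_I=2\), \(r_S=3\), \(H^*(3)=-1/5\), and the claimed bound is \(\delta^3(\varepsilon/\delta)^{24/5}\).
\item For \(m=5\), \(H=3/5\), \(r=3\): \(r_I=2<r<r_S=5\), and the claim is \(\delta^3(\varepsilon/\delta)^{26/5}\).
\end{itemize}
Both claims are contradicted. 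More generally, for \(H>1/2\) the first entry fails whenever \(H^*(m)<0\), and the second whenever \((m-r)(2-2H)>1\).

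A correct statement therefore has to carry the adjacent-block contribution, for instance as the joint bound your argument gives. That extra piece is \(O(\delta\varepsilon^2)\), the same order as \(E^{\mathrm{Diag}}_{2m-2r}\), so Proposition~\ref{prop: chaos asymptotics H<1/2} and Theorem~\ref{thm: variance bound} are unaffected.

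A minor point: in the case \(r_S<r<r_I\) your factorisation yields \(\delta^{2+2(m-r)(2-2H)}(\varepsilon/\delta)^{2m(2-2H)}\). This is smaller than the stated \(\delta^{2+2(m-r)(2H-2)}(\varepsilon/\delta)^{2m(2-2H)}\), so that entry does follow.
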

\begin{proof}[of Lemma \ref{lem: asymptotics m>1 H<1/2}]
Just as in \eqref{eq: sum of diagonal m>1} we have that
\begin{equation}
E_{2m-2r}^{\text{Diag}}\leq \delta^3\left(\int_{[0,1]^2}\rho((u-v)\delta/\varepsilon)^m dudv\right)^2
\end{equation}
and so, in the case \(H^*(m)<1/2\) we have that via Lemma \ref{lem: integral asymptotics}
\begin{equation}
E_{2m-2r}^{\text{Diag}}\lesssim 4\left(\int_0^{\infty}\rho(s)^m\right)^2\delta\varepsilon^2
\end{equation}
For the off-diagonal terms again as in \eqref{eq: sum off diagonals m>1 1} and \eqref{eq: sum off diagonals m>1 2} that 
\begin{align}
& \begin{multlined}
E_{2m-2r}^{\text{Off}} \sim \delta^2\varepsilon^{2(m-r)(2-2H)}\int_{[0,1]^4}\rho((u-v)\delta/\varepsilon)^r\rho((l-s)\delta/\varepsilon)^r\\ 
2\left(\f{1}{N}\sum_{k=1}^{N-1}(1-k/N)|k/N|^{2(m-r)(2H-2)}\right)dudvdlds
\end{multlined}\\
& \begin{multlined}=\delta^2\varepsilon^{2(m-r)(2-2H)}2\left(\f{1}{N}\sum_{k=1}^{N-1}(1-k/N)|k/N|^{2(m-r)(2H-2)}\right) \\ (\varepsilon/\delta)^4\left(\int_{[0,\delta/\varepsilon]^2}\rho((u-v))^r dudv\right)^2
\end{multlined}
\end{align}
each of the cases follow from choosing the combination of asymptotics from Lemmas \ref{lem: integral asymptotics} and \ref{lem: sum asymptotics} that 
apply to each case: 
\begin{enumerate}
    \item If \(r\leq r_S\wedge r_I\) then both the integral and sum are divergent and we get the asymptotics \begin{equation}
        \sim \delta^2 \varepsilon^{2(m-r)(2-2H)}\delta^{2(m-r)(2H-2)+1}(\varepsilon/\delta)^{2r(2-2H)} = \delta^3 (\varepsilon/\delta)^{2m(2-2H)}
    \end{equation}
    \item If \(r\geq r_S\vee r_I\) then both the integral and sum are convergent and we get the asymptotics \begin{equation}
        \sim \delta^2 \varepsilon^{2(m-r)(2-2H)}(\varepsilon/\delta)^{2} = \varepsilon^{2(m-r)(2-2H)+2}
    \end{equation}
    \item If \(r_S\leq r \leq r_I\) then the sum is convergent but the integral is divergent and we get the asymptotics \begin{equation}
        \sim \delta^2 \varepsilon^{2(m-r)(2-2H)}(\varepsilon/\delta)^{2r(2-2H)} = \delta^{2+2(m-r)(2H-2)}(\varepsilon/\delta)^{2m(2-2H)}
    \end{equation}
    \item If \(r_I\leq r \leq r_S\) then the sum is divergent but the integral is convergent and we get the asymptotics \begin{equation}
        \sim \delta^2 \varepsilon^{2(m-r)(2-2H)}\delta^{2(m-r)(2H-2)+1}(\varepsilon/\delta)^{2} = \delta \varepsilon^2 (\varepsilon/\delta)^{2(m-r)(2-2H)}
    \end{equation}
\end{enumerate}
\end{proof}
\section{Proof of Lemma \ref{lem: convergence of the inner product}}
\label{sec: convergence inner product appendix}
\begin{proof}[of Lemma \ref{lem: convergence of the inner product}]
Note first that 
\begin{align}
&\left\langle f_{\varepsilon,\delta}, \frac{2}{K(2H-1,2)}f\right\rangle_{L^2(\mathbb{R}^2)} = \\
&\begin{multlined}
N^{2H^*(m)-1 + (2-2H)}\varepsilon^{2(H^*(m)-1)}\sum_{k=0}^{N-1}\int_{[k\delta,(k+1)\delta]^2}\int_0^1 \left(\int_{-\infty}^{u\vee s}\tau_uK^{\varepsilon}(x)(s-x)_{+}^{H-3/2}dx\right) \\ \left(\int_{-\infty}^{v\vee s}\tau_v K^{\varepsilon}(y)(s-y)_{+}^{H-3/2}dy\right)\rho^{\varepsilon}(u-v)^{m-1}dsdudv
\end{multlined}
\end{align}
We will now single out the small domains where the fractional OU kernel does not decay like \(s^{H-3/2}\) as when the integration domain passes through them the asymptotic behaviour of the kernel cannot be used even for small enough \(\varepsilon\) and we need 
to control the contribution of these small domains to the limit. We split the inner integral over \([0,1]\) into three parts as follows
\begin{align}
&\begin{multlined}
\sum_{k=0}^{N-1}\int_{[k\delta,(k+1)\delta]^2}\int_0^{u\wedge v}\left(\int_{-\infty}^{u\vee s}\tau_uK^{\varepsilon}(x)(s-x)_{+}^{H-3/2}dx\right) \\ \left(\int_{-\infty}^{v\vee s}\tau_v K^{\varepsilon}(y)(s-y)_{+}^{H-3/2}dy\right)\rho^{\varepsilon}(u-v)^{m-1}dsdudv 
\end{multlined}\\
&\begin{multlined}
    + \sum_{k=0}^{N-1}\int_{[k\delta,(k+1)\delta]^2}\int_{u\wedge v}^{u\vee v}\left(\int_{-\infty}^{u\vee s}\tau_uK^{\varepsilon}(x)(s-x)_{+}^{H-3/2}dx\right) \\ \left(\int_{-\infty}^{v\vee s}\tau_v K^{\varepsilon}(y)(s-y)_{+}^{H-3/2}dy\right)\rho^{\varepsilon}(u-v)^{m-1}dsdudv
\end{multlined}\\
&\begin{multlined}
+ \sum_{k=0}^{N-1}\int_{[k\delta,(k+1)\delta]^2}\int_{u\vee v}^1\left(\int_{-\infty}^{u\vee s}\tau_uK^{\varepsilon}(x)(s-x)_{+}^{H-3/2}dx\right) \\ \left(\int_{-\infty}^{v\vee s}\tau_v K^{\varepsilon}(y)(s-y)_{+}^{H-3/2}dy\right)\rho^{\varepsilon}(u-v)^{m-1}dsdudv
\end{multlined} 
\end{align}
and denote them by (1), (2) and (3) respectively. We now filter the terms of order \(\mathcal{O}(1)\) in each term. Note that in (1) the aforementioned small domain in the fOU kernels does not appear as the integration domain is always away from the singularity, in (2) it appears once and in (3) it appears twice.
We begin with the third one. We first make the split as in Lemma \ref{lem: fOU kernel asymptotics}  
\begin{equation}
\int_{-\infty}^{u}\tau_uK^{\varepsilon}(x)(s-x)_{+}^{H-3/2}dx = A^{<}_u + A^{\varepsilon}_u
\end{equation}
and the same for the \(y\) variable leading to the decomposition of the integrand into four terms. 
\begin{equation}
(3) = \sum_{k=0}^{N-1}\int_{[k\delta,(k+1)\delta]^2}\int_{u\vee v}^1 (A^{<}_u A^{<}_v + A^{<}_u A^{\varepsilon}_v + A^{\varepsilon}_u A^{<}_v + A^{\varepsilon}_u A^{\varepsilon}_v) \rho^{\varepsilon}(u-v)^{m-1}dsdudv
\end{equation}
which we denote (3.1), (3.2), (3.3) and (3.4) respectively. We will show that the only term that contributes to the limit is (3.1) and the rest vanish after renormalisation. We begin with (3.4). Using the asymptotics in 
Lemma \ref{lem: fOU kernel asymptotics}
\begin{align}
(3.4)&\leq \sum_{k=0}^{N-1}\int_{[k\delta,(k+1)\delta]^2}\int_{u\vee v}^1 \varepsilon (s-u)^{H-3/2}(s-v)^{H-3/2}\rho^{\varepsilon}(u-v)^{m-1}dsdudv\\
&\leq \sum_{k=0}^{N-1}\int_{[k\delta,(k+1)\delta]^2}\beta(H-1/2,2-2H)\varepsilon |u-v|^{2H-2}\rho^{\varepsilon}(u-v)^{m-1}dudv\\
&=\beta(H-1/2,2-2H)\varepsilon\varepsilon^{(2H-2)}\delta^{-1}\int_{[0,\delta]^2}\left|\frac{u-v}{\varepsilon}\right|^{2H-2}\rho\left(\frac{u-v}{\varepsilon}\right)^{m-1}dudv\\
&=\beta(H-1/2,2-2H)\varepsilon\varepsilon^{(2H-2)}\delta^{-1}\varepsilon^2\int_{[0,\delta/\varepsilon]^2}|u-v|^{2H-2}\rho(u-v)^{m-1}dudv\\
&\sim C \varepsilon\varepsilon^{(2H-2)}\delta^{-1}\varepsilon^2 (\delta/\varepsilon)^{m(2H-2)+2} = C\varepsilon^{(m-1)(2H-2)+1}\delta^{2H^*(m)-1}
\end{align}
where in the second inequality we use the bound in Lemma \ref{lem: aux integral 2}. Thus after re-normalisation
\begin{equation}
N^{1-2H^*(m)+2H-2}\varepsilon^{2(1-H^*(m))}(3.4) \lesssim \varepsilon^{2H-1}\delta^{2H-2} = (\varepsilon/\delta)^{2H-2}\delta^{4H-3}
\end{equation}
which vanishes as \(\varepsilon/\delta\to 0\) and \(H>3/4\). Now in the term (3.3) the constant behaviour appears only once 
\begin{align}
(3.3)&\leq \sum_{k=0}^{N-1}\int_{k\delta}^{(k+1)\delta}\int_{k\delta}^{(k+1)\delta}\varepsilon^{3/2-H}(s-u)^{H-3/2}(s-v)^{2H-2}\rho^{\varepsilon}(u-v)^{m-1}dsdudv\\
\end{align}
In this case we need to split into two cases. If \(3/4<H<5/6\) we can use the asymptotics Lemma \ref{lem: aux integral 3} to get the bound 
\begin{align}
(3.3) &\leq \sum_{k=0}^{N-1}\int_{k\delta}^{(k+1)\delta}\int_{k\delta}^{(k+1)\delta}\varepsilon^{3/2-H}|u-v|^{H-3/2}\rho^{\varepsilon}(u-v)^{m-1}dudv\\
&=\varepsilon^{3/2-H}\varepsilon^{3H-7/2}\delta^{-1}\int_{[0,\delta]^2}\left|\frac{u-v}{\varepsilon}\right|^{3H-5/2}\rho\left(\frac{u-v}{\varepsilon}\right)dudv\\
&\lesssim \varepsilon^{3/2-H}\delta^{-1}\varepsilon^{2H-2 + H-1/2}\varepsilon^2 (\delta/\varepsilon)^{m(2H-2)+(H-1/2)+2}\\
&= \varepsilon^{3/2-H + (m-1)(2-2H)}\delta^{2H^*(m)-1 + (H-1/2)}
\end{align}
So after renormalisation the order of this term is controlled by 
\begin{equation}
N^{1-2H^*(m)+2H-2}\varepsilon^{2(1-H^*(m))}(3.3)\lesssim \varepsilon^{H-1/2}\delta^{3H-5/2}.
\end{equation}
To make sure this vanishes we would in principle have to impose the subsampling condition \(\delta = \varepsilon^{\alpha}\) for \(\alpha<\frac{H-1/2}{5/2-3H}\) for this term to vanish. However, for \(3/4<H<5/6\) we can bound \(1<\frac{H-1/2}{5/2-3H}\) 
and it does not add an extra subsampling condition rather than the usual regime. Note that the term (3.2) is controlled equally due to the interchangeability of the roles of \(u\) and \(v\) and the fact that changing the singularity within the integral 
does not alter the order but just the constant in front as seen in Lemma \ref{lem: aux integral 3}. 

If instead \(5/6\leq H<1\) we cannot use these bounds as the beta functions are no longer defined as the integrals diverge and we need to find the order explicitly. The strategy employed is to localize each of the variables into squares of the same size to get good control both on and off the diagonal 
and single out the singularities. That means to write 
\begin{align}
(3.3)& = \sum_{k=0}^{N-1}\int_{k\delta}^{(k+1)\delta}\int_{k\delta}^{(k+1)\delta}\int_0^1\varepsilon^{3/2-H}(s-u)_{+}^{H-3/2}(s-v)_{+}^{2H-2}ds \rho^{\varepsilon}(u-v)^{m-1}dudv\\
&=\sum_{i,j=0}^{N-1}\int_{i\delta}^{(i+1)\delta}\int_{i\delta}^{(i+1)\delta}\int_{j\delta}^{(j+1)\delta}\varepsilon^{3/2-H}(s-u)_{+}^{H-3/2}(s-v)_{+}^{2H-2}ds \rho^{\varepsilon}(u-v)^{m-1}dudv\\
&\begin{multlined}
=\varepsilon^{3/2-H}\varepsilon^{H-3/2}\varepsilon^{2H-2}\sum_{i,j=0}^{N-1}\int_{[0,\delta]^3}\left(\frac{s-u}{\varepsilon}+\frac{(j-i)\delta}{\varepsilon}\right)_{+}^{H-3/2} \\ \left(\frac{s-v}{\varepsilon}+\frac{j-i}{\varepsilon}\right)_{+}^{2H-2}\rho\left(\frac{u-v}{\varepsilon}\right)^{m-1}dsdudv
\end{multlined}
\end{align}
Here we need to study separately the off and on diagonal terms. For the off-diagonal ones we have that as in previous calculations the behaviour within the two first term in the integrals is dominated 
by the distance between the squares rather than the particular values within the square, so we can write 
\begin{align}
&\begin{multlined}
=\varepsilon^{3/2-H}\varepsilon^{H-3/2}\varepsilon^{2H-2}\sum_{i,j=0}^{N-1}\int_{[0,\delta]^3}\left(\frac{s-u}{\varepsilon}+\frac{(j-i)\delta}{\varepsilon}\right)_{+}^{H-3/2} \\ \left(\frac{s-v}{\varepsilon}+\frac{j-i}{\varepsilon}\right)_{+}^{2H-2}\rho\left(\frac{u-v}{\varepsilon}\right)^{m-1}dsdudv
\end{multlined}\\
&=\varepsilon^{3/2-H}\varepsilon^{H-3/2}\varepsilon^{2H-2}\left(\delta\sum_{i,j=0}^{N-1}\left(\frac{(j-i)\delta}{\varepsilon}\right)_{+}^{3H-7/2}\right)\int_{[0,\delta]^2}\rho\left(\frac{u-v}{\varepsilon}\right)^{m-1}dudv\\
&\sim \varepsilon^{3/2-H}\varepsilon^{(m-1)(2-2H)}\delta^{(m-1)(2H-2)+1}(\delta\sum_{k=1}^{N-1}(1-\delta-k\delta)(k\delta)^{3H-7/2})\\ 
&\sim \varepsilon^{3/2-H}\varepsilon^{(m-1)(2-2H)}\delta^{(m-1)(2H-2)+1}\int_0^1(1-x)x^{3H-7/2}dx
\end{align}
Note that the latter integral is finite iff \(3H-7/2>-1\) which is the case for \(H>5/6\). After renormalization we thus get that 
\begin{align}
N^{1-2H^*(m)+2H-2}\varepsilon^{2(1-H^*(m))}(3.3)_{\text{off}} &\lesssim \varepsilon^{H-1/2} 
\end{align}
We now study the diagonal terms for which we take advantage of the asymptotics in Lemma \ref{lem: triple integral asymptotics}.
\begin{align}
&\varepsilon^{3/2-H}\varepsilon^{H-3/2}\varepsilon^{2H-2}\delta^{-1}\int_{[0,\delta]^3}\left(\frac{s-u}{\varepsilon}\right)_{+}^{H-3/2}\left(\frac{s-v}{\varepsilon}\right)_{+}^{2H-2}\rho\left(\frac{u-v}{\varepsilon}\right)^{m-1}dsdudv\\
&=\varepsilon^{3/2-H}\varepsilon^{H-3/2}\varepsilon^{2H-2}\delta^{-1}\varepsilon^3\int_{[0,\delta/\varepsilon]^3}(s-u)_{+}^{H-3/2}(s-v)_{+}^{2H-2}\rho(u-v)^{m-1}dsdudv\\
&\lesssim \varepsilon^{3/2-H}\varepsilon^{H-3/2}\varepsilon^{2H-2}\delta^{-1}\varepsilon^3(\delta/\varepsilon)^{3H-7/2+(m-1)(2H-2)+3}\\
&=\varepsilon^{3/2-H - 2(H^*(m)-1)+2H-2}\delta^{2H^*(m)-1+2-2H+3H-7/2 +1}
\end{align}
and after rescaling we get that 
\begin{align}
N^{1-2H^*(m)+2H-2}\varepsilon^{2(1-H^*(m))}(3.3)_{\text{diag}} &\lesssim (\varepsilon/\delta)^{H-1/2}\delta^{4H-3}
\end{align}
which vanishes as \(\varepsilon/\delta\to 0\) and \(H>3/4\). This concludes the study of the term (3.3) due to the interchangeability of \(u\) and \(v\). 

The term (3.2) is controlled equally due to the interchangeability of the roles of \(u\) and \(v\). We now study the term (3.1) which is the only one that contributes 
\begin{align}
&\begin{multlined}
    (3.1) = \sum_{k=0}^{N-1}\int_{[k\delta,(k+1)\delta]^2}\int_{u\vee v}^1 \left(\int_{-\infty}^{u-\varepsilon}\tau_u K^{\varepsilon}(x)(s-x)_{+}^{H-3/2}dx\right) \\ \left(\int_{-\infty}^{u-\varepsilon}\tau_v K^{\varepsilon}(y)(s-y)_{+}^{H-3/2}dy\right)\rho^{\varepsilon}(u-v)^{m-1}dudv 
\end{multlined}\\
&\begin{multlined}
\sim p^2\beta(H-1/2,2-2H)^2 \sum_{k=0}^{N-1}\int_{[k\delta,(k+1)\delta]^2}\varepsilon^{2-2H} \\ \left(\int_{u\vee v}^1 (s-u)^{2H-2}(s-v)^{2H-2}ds\right)\rho^{\varepsilon}(u-v)^{m-1}dudv
\end{multlined}
\end{align}
which is the term of order \(\mathcal{O}(1)\) which we will later put together with the ones coming from the other terms. We study now the term (2) for which we apply a similar split. In this case we split as before the term in which the constant behaviour of the fOU kernel appears and get 
\begin{align}
&\begin{multlined}
(2) =\sum_{k=0}^{N-1}\int_{[k\delta,(k+1)\delta]^2}\int_{v}^{u}\left(\int_{-\infty}^{s}\tau_uK^{\varepsilon}(x)(s-x)_{+}^{H-3/2}dx\right) \\ \left(\int_{-\infty}^{v}\tau_v K^{\varepsilon}(y)(s-y)_{+}^{H-3/2}dy\right)\rho^{\varepsilon}(u-v)^{m-1}dsdudv
\end{multlined}\\
& = \sum_{k=0}^{N-1}\int_{[k\delta,(k+1)\delta]^2}\int_{v}^{u}\left(\int_{-\infty}^{s}\tau_uK^{\varepsilon}(x)(s-x)_{+}^{H-3/2}dx\right)\left(A^{<}_v + A^{\varepsilon}_v\right)\rho^{\varepsilon}(u-v)^{m-1}dsdudv\\
& = (2.1) + (2.2)
\end{align}
We will show just as before that the term (2.2) vanishes and the term (2.1) will contribute to the limit. By the analysis in the fOU kernel of Lemma \ref{lem: fOU kernel asymptotics} we can bound 
\begin{align}
(2.2)&\lesssim \sum_{k=0}^{N-1}\int_{[k\delta,(k+1)\delta]}\left(\int_v^u \varepsilon^{3/2-H}(s-v)^{H-3/2}(u-s)^{2H-2}ds\right)\rho^{\varepsilon}(u-v)^{m-1}dudv\\
&\sim \sum_{k=0}^{N-1}\int_{[k\delta,(k+1)\delta]^2}\varepsilon^{3/2-H}(u-v)^{3H-5/2}\rho^{\varepsilon}(u-v)^{m-1}dudv\\
&=\varepsilon^{3/2-H}\delta^{-1}\int_{[0,\delta]^2}(u-v)^{3H-5/2}\rho^{\varepsilon}(u-v)^{m-1}dudv\\
&=\varepsilon^{3/2-H}\delta^{-1}\varepsilon^{3H-5/2}\int_{[0,\delta]^2}\left(\frac{u-v}{\varepsilon}\right)^{3H-5/2}\rho\left(\frac{u-v}{\varepsilon}\right)^{m-1}dudv\\
&=\varepsilon^{3/2-H}\delta^{-1}\varepsilon^{3H-5/2}\varepsilon^2\int_{[0,\delta/\varepsilon]^2}(u-v)^{3H-5/2}\rho(u-v)^{m-1}dudv\\
&\lesssim \varepsilon^{(3/2-H) + (m-1)(2-2H)}\delta^{m(2H-2)+ H-1/2 + 1}\\ 
&=\varepsilon^{(3/2-H) + (m-1)(2-2H)}\delta^{2H^*(m)-1 + H-1/2}
\end{align}
and therefore after renormalisation we get that 
\begin{equation}
N^{1-2H^*(m)+2H-2}\varepsilon^{2(1-H^*(m))}(2.2) \lesssim \varepsilon^{H-1/2}\delta^{3H-5/2}
\end{equation}
which vanishes if \(5/6<H<1\) and if \(3/4<H<5/6\) we need to impose the subsampling condition \(\delta = \varepsilon^{\alpha}\) for \(\alpha<\frac{H-1/2}{5/2-3H}\) which again does not add any extra condition. We now study the term (2.1) which we simply write 
\begin{multline}
(2.1) \sim p^2\beta(H-1/2,2-2H)^2 \sum_{k=0}^{N-1}\int_{[k\delta,(k+1)\delta]^2}\varepsilon^{2-2H} \\ 
\left(\int_{v\wedge u}^{v\vee u} (s-u\wedge v)^{2H-2}(u\vee v -s)^{2H-2}ds\right) \rho^{\varepsilon}(u-v)^{m-1}dudv
\end{multline}

Finally we similarly write term (1) as 
\begin{multline}
(1)\sim p^2\beta(H-1/2,2-2H)^2 \sum_{k=0}^{N-1}\int_{[k\delta,(k+1)\delta]^2}\varepsilon^{2-2H} \\ 
\int_0^{u\wedge v} (u-s)^{2H-2}(v-s)^{2H-2}ds \rho^{\varepsilon}(u-v)^{m-1}dudv
\end{multline}

After filtering out the vanishing terms and just keeping the ones that contribute to the limit we get that 
\begin{align}
\left\langle f_{\varepsilon,\delta}, \frac{2}{K(2H-1,2)}f\right\rangle_{L^2(\mathbb{R}^2)} \\
&\begin{multlined}
\sim p^2\beta(H-1/2,2-2H)^2 \sum_{k=0}^{N-1}\int_{[k\delta,(k+1)\delta]^2}\varepsilon^{2-2H} \\ \int_0^1|s-u|^{2H-2}|s-v|^{2H-2}\rho^{\varepsilon}(u-v)^{m-1}dsdudv
\end{multlined}
\end{align}
We employ the same strategy as before to split into squares the inner integral and single out the diagonal singularity to show that it is indeed the off-diagonal behaviour 
that dominates. We write 
\begin{align}
&\begin{multlined}
\sim p^2\beta(H-1/2,2-2H)^2 \sum_{k=0}^{N-1}\int_{[k\delta,(k+1)\delta]^2}\varepsilon^{2-2H} \\ \int_0^1|s-u|^{2H-2}|s-v|^{2H-2}\rho^{\varepsilon}(u-v)^{m-1}dsdudv
\end{multlined}\\
&\begin{multlined}
= p^2\beta(H-1/2,2-2H)^2 \sum_{i,j=0}^{N-1}\int_{i\delta}^{(i+1)\delta}\int_{i\delta}^{(i+1)\delta}\int_{j\delta}^{(j+1)\delta}\varepsilon^{2-2H} \\ |s-u|^{2H-2}|s-v|^{2H-2}\rho^{\varepsilon}(u-v)^{m-1}dsdudv
\end{multlined}
\end{align}
We now study the off-diagonal terms 
\begin{align}
&\begin{multlined}
p^2\beta(H-1/2,2-2H)^2\sum_{i\neq j=0}^{N-1}\int_{[0,\delta]}\varepsilon^{2-2H}\left(|s-u| + |i-j|\delta\right)^{2H-2} \\ \left(|s-v| + |i-j|\delta\right)^{2H-2}\rho^{\varepsilon}(u-v)^{m-1}dsdudv
\end{multlined}\\
& \begin{multlined}
= p^2\beta(H-1/2,2-2H)^2\varepsilon^{2-2H}\varepsilon^{2(2H-2)}\sum_{i\neq j=0}^{N-1}\int_{[0,\delta]^3}\left(\frac{|s-u|}{\varepsilon} + |i-j|\delta/\varepsilon \right)^{2H-2} \\ \left( \frac{|s-v|}{\varepsilon} + |i-j|\delta/\varepsilon \right)^{2H-2}\rho\left(\frac{u-v}{\varepsilon}\right)^{m-1}dsdudv
\end{multlined}\\
& \begin{multlined}
\sim p^2\beta(H-1/2,2-2H)^2\varepsilon^{2-2H}\varepsilon^{2(2H-2)}\left(\delta\sum_{i\neq j=0}^{N-1}\left(|i-j|\delta/\varepsilon \right)^{2(2H-2)}\right) \\ \int_{[0,\delta]^2}\rho\left(\frac{u-v}{\varepsilon}\right)^{m-1}dsdudv
\end{multlined}\\
&\begin{multlined}
\sim p^2\beta(H-1/2,2-2H)^2\frac{2(1/\Gamma(2H-1))^{m-1}}{((m-1)(2H-2)+2)((m-1)(2H-2)+1)} \\ 
\left(\int_{[0,1]^2}|x-y|^{2(2H-2)}dxdy\right)\varepsilon^{2-2H}\varepsilon^{(m-1)(2-2H)}\delta^{(m-1)(2H-2)+1} 
\end{multlined}
\end{align}
after renormalization we get that the off-diagonal terms are asymptotically equivalent to 
\begin{align}
&\begin{multlined}
p^2\beta(H-1/2,2-2H)^2\frac{2(1/\Gamma(2H-1))^{m-1}}{((m-1)(2H-2)+2)((m-1)(2H-2)+1)} \\\left(\int_{[0,1]^2}|x-y|^{2(2H-2)}dxdy\right)
\end{multlined}\\
&= \beta(H-1/2,2-2H) \frac{2(1/\Gamma(2H-1))^m}{((m-1)(2H-2)+2)((m-1)(2H-2)+1)}\frac{1}{(4H-3)(2H-1)}.
\end{align}
We now show that the sum of diagonal terms vanishes. Indeed we can control this by 
\begin{align}
&\sum_{k=0}^{N-1}\int_{[0,\delta]^3}\varepsilon^{2-2H}|s-u|^{2H-2}|s-v|^{2H-2}\rho^{\varepsilon}(u-v)^{m-1}dsdudv \\
=&\delta^{-1}\varepsilon^{2-2H}\varepsilon^{2(2H-2)}\int_{[0,\delta]^3}\left|\frac{s-u}{\varepsilon}\right|^{2H-2}\left|\frac{s-v}{\varepsilon}\right|^{2H-2}\rho\left(\frac{u-v}{\varepsilon}\right)^{m-1}dsdudv\\
=&\delta^{-1}\varepsilon^{2-2H}\varepsilon^{2(2H-2)}\varepsilon^3\int_{[0,\delta/\varepsilon]^3}|s-u|^{2H-2}|s-v|^{2H-2}\rho(u-v)^{m-1}dsdudv\\
=&\delta^{-1}\varepsilon^{2-2H}\varepsilon^{2(2H-2)}\varepsilon^3 4\int_{[0,\delta/\varepsilon]^3}(s-u)_{+}^{2H-2}(s-v)_{+}^{2H-2}\rho(u-v)^{m-1}dsdudv\\
\asymp &\delta^{-1}\varepsilon^{2-2H}\varepsilon^{2(2H-2)}\varepsilon^3 (\delta/\varepsilon)^{2(2H-2)+(m-1)(2H-2)+3}\\ 
=&\varepsilon^{m(2-2H)}\delta^{(m-1)(2H-2)+2(2H-2)+2}
\end{align}
Therefore we have that the asymptotic order of the sum of diagonal terms after renormalisation is 
\begin{equation}
\delta^{2(2H-2)+1}
\end{equation}
and note that \(2(2H-2)+1>0\) if and only if \(H>3/4\) which is precisely the case. The proof is now concluded by noting that 
\begin{align}
\langle f_{\varepsilon,\delta}, f_1\rangle_{L^2(\mathbb{R}^2)}&\sim \\ 
&\begin{multlined}
\beta(H-1/2,2-2H) \frac{2(1/\Gamma(2H-1))^m}{((m-1)(2H-2)+2)((m-1)(2H-2)+1)} \\ \frac{1}{(4H-3)(2H-1)}\frac{K(2H-1,2)}{2} = R
\end{multlined}
\end{align}
\end{proof}
\section{Technical Lemmas}
\begin{lemma}
    \label{lem: integral asymptotics}
Let \(\beta(s)\) be a continuous non-increasing function satisfying \(\beta(0) = 1\) and \(|\beta(s)|\sim Ks^{\alpha}\) for some positive constant \(K\) and some \(\alpha<0\) as \(s\to\infty\). Then for any positive integer \(k\), as \(h\to\infty\):
\begin{enumerate}
    \item If \(\beta^k\) is not integrable along the positive half-axis (\(\alpha k>-1\)): \begin{equation}
        \label{eq: asymptotic if non-integrable}
        \int_{[a,a+h]^2}\beta(s-t)^k dsdt \sim \f{2K^k h^{\alpha k+2}}{(\alpha k+2)(\alpha k + 1)}
    \end{equation}
    \item If \(\alpha k<-1\):  \begin{equation}
        \label{eq: asymptotic if integrable}
        \int_{[a,a+h]^2}\beta(s-t)^k dsdt \sim 2h\int_0^{\infty}\beta(s)^k ds
    \end{equation}
    \item If \(\alpha k =-1\): \begin{equation}
        \label{eq: asymptotic if critical}
        \int_{[a,a+h]^2}\beta(s-t)^k dsdt \sim 2Kh\log(h)
    \end{equation}
\end{enumerate}
\end{lemma}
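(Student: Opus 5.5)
The plan is to reduce the double integral to a one-dimensional weighted integral and then treat the three regimes by splitting at a fixed large cutoff. By translation invariance the value does not depend on \(a\), so take \(a=0\). The change of variables \(x=s-t\), together with the symmetry \(\beta(-x)=\beta(x)\) implicit in the statement (\(\beta\) is a covariance-type function, as \(\rho\) is), gives
\begin{equation}
\int_{[0,h]^2}\beta(s-t)^k\,ds\,dt = 2\int_0^h (h-x)\,\beta(x)^k\,dx = 2h\int_0^h\beta(x)^k\,dx - 2\int_0^h x\,\beta(x)^k\,dx .
\end{equation}
This is the same identity already used in the proof of Theorem~\ref{thm: asymptotic unbiasedness}. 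All three cases then follow from the asymptotics of \(\int_0^h\beta^k\) and \(\int_0^h x\beta^k\), using only continuity of \(\beta\) (so \(\beta^k\) is bounded on compacts) and \(\beta(x)^k\sim K^k x^{\alpha k}\) at infinity. The monotonicity hypothesis is not needed beyond this.

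\emph{Non-integrable case \(\alpha k>-1\).} Fix \(\eta>0\) and choose \(x_0\) such that \(|\beta(x)^k-K^kx^{\alpha k}|\le \eta K^k x^{\alpha k}\) for \(x\ge x_0\). Split \(\int_0^h(h-x)\beta^k\) at \(x_0\). The piece over \([0,x_0]\) is \(O(h)\). The tail piece equals \((1+O(\eta))K^k\int_{x_0}^h (h-x)x^{\alpha k}\,dx\). Since \(\alpha k>-1\), we have \(\int_0^h(h-x)x^{\alpha k}\,dx = h^{\alpha k+2}/((\alpha k+1)(\alpha k+2))\). Because \(\alpha k+2>1\), the \(O(h)\) and \(x_0\)-dependent corrections are negligible against \(h^{\alpha k+2}\). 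Letting \(\eta\to0\) gives \eqref{eq: asymptotic if non-integrable}.

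\emph{Integrable case \(\alpha k<-1\).} Here \(2h\int_0^h\beta^k\to 2h\int_0^\infty\beta^k\), with error \(2h\int_h^\infty\beta^k = O(h^{\alpha k+2}) = o(h)\). For the second term, \(\int_0^h x|\beta|^k\) is \(O(1)\) when \(\alpha k<-2\), \(O(\log h)\) when \(\alpha k=-2\), and \(O(h^{\alpha k+2})\) otherwise. In every case it is \(o(h)\), since \(\alpha k+2<1\). This gives \eqref{eq: asymptotic if integrable}. One should note that \(\int_0^\infty\beta^k\neq0\) is needed for ``\(\sim\)''; for even \(k\), or for positive \(\beta\) as in the \(H>1/2\) applications, this holds automatically.

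\emph{Critical case \(\alpha k=-1\).} Now \(\int_0^h\beta^k = K^k\log h\,(1+o(1))\), because the tail contributes \(K^k\int_{x_0}^h x^{-1}\,dx\) and the compact part is \(O(1)\). Also \(\int_0^h x\beta^k = O(h)\). Hence the total is \(2K^k h\log h\,(1+o(1))\). The statement writes \(2Kh\log h\); I would prove it with \(K^k\) and remark that this is the intended constant.

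The only step needing care is the uniform replacement of \(\beta^k\) by its power-law asymptote inside the weighted integral \(\int (h-x)\beta^k\). I expect this to be the main, though mild, obstacle. The \(\eta\)-cutoff argument handles it, provided one checks in each regime that the discarded compact part has strictly lower order than the claimed leading term. That check is exactly where the three regimes separate: \(h^{\alpha k+2}\) versus \(h\) versus \(h\log h\).
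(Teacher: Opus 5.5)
Your proof is correct and follows the same route as the paper: the change of variables to \(2\int_0^h(h-x)\beta(x)^k\,dx\), the split into \(2h\int_0^h\beta^k-2\int_0^h x\beta^k\), and a compact-part plus power-law-tail analysis in each regime. Your version is more careful than the paper's: its Case 2 asserts \(\int_0^h s\beta^k\sim h^{\alpha k+2}/(\alpha k+2)\) even when \(\alpha k\le -2\), and the constant in its Case 3 should indeed be \(K^k\). One small remark: monotonicity together with \(\beta\to 0\) forces \(\beta\ge 0\) on \([0,\infty)\). This is exactly what justifies \(\beta^k\sim K^kx^{\alpha k}\) for odd \(k\) and makes \(\int_0^\infty\beta^k>0\) automatic, so that hypothesis is doing real work.
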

\begin{proof}
Taking the change of variables \(u = s-t\) and \(v=t\) we get  
\begin{equation}
\label{eq: change of variables}
\int_{[a,a+h]^2}\beta(s-t)^k dsdt = 2\int_0^h(h-s)\beta(s)^kds
\end{equation}
\textbf{Case 1}: 
We treat each of the terms arising in \eqref{eq: change of variables} separately 
for the first term we have that, for \(h>1\): 
\begin{align}
2h\int_0^h\beta(s)^kds &= 2h\int_0^1\beta(s)^kds + 2h\int_0^h \beta(s)^kds \\
&\sim \f{2K^k h^{\alpha k+1}}{\alpha k + 1} + 2Ch\\
&\sim \f{2K^k h^{\alpha k+1}}{\alpha k + 1}
\end{align}
where \(C\) is some constant we are not specific about since the first term dominates as \(h\to\infty\) whenever \(\alpha k>-1\). A similar technique on the second term leads to  
\begin{align}
2\int_0^hs\beta(s)^kds &\sim \f{2K^k h^{\alpha k+2}}{\alpha k+2} + 2C\\
&\sim \f{2K^k h^{\alpha k+2}}{\alpha k+2}
\end{align}
Now putting the two together through \eqref{eq: change of variables} we get 
\begin{align}
\int_{[a,a+h]^2}\beta(s-t)^k dsdt &\sim \f{2K^k h^{\alpha k+1}}{\alpha k + 1} - \f{2K^k h^{\alpha k+2}}{\alpha k+2}\\
&= \f{2K^k h^{\alpha k+2}}{(\alpha k+2)(\alpha k + 1)}
\end{align}
\textbf{Case 2:} in this case we can write \eqref{eq: change of variables} as
\begin{equation}
2h\int_0^{\infty}\beta(s)^kds - 2h\int_h^{\infty}\beta(s)ds - 2\int_0^hs\beta(s)^kds
\end{equation}
and we only need to show that the first term is the dominant as \(h\) grows. For the second term we have the asymptotics 
\begin{align}
2h\int_h^{\infty}\beta(s)^k ds\sim -\f{2h^{\alpha k+2}}{\alpha k+1}
\end{align}
and due to the integrability condition here \(\alpha k+2 <1\). Similarly for the third term 
\begin{align}
2\int_0^h s\beta(s)^k \sim \f{2h^{\alpha k+2}}{\alpha k+2}
\end{align}
which concludes this case. \\
\textbf{Case 3:} we use again \eqref{eq: change of variables} and now the first term behaves as
\begin{equation}
\int_0^h\beta(s)^k ds \sim \int_0^1 c ds + \int_1^h Ks^{-1} ds \sim K\log(h) + c 
\end{equation}
whereas the second one 
\begin{equation}
\int_0^h s\beta(s)^k ds \sim \int_0^1 cs ds + \int_1^h Ks^{0} ds \sim Kh + c
\end{equation}
putting it all together in \eqref{eq: change of variables} we get 
\begin{equation}
\int_{[a,a+h]^2}\beta(s-t)^k dsdt \sim 2Kh\log(h) 
\end{equation}
\end{proof}

\begin{lemma}
    \label{lem: sum asymptotics}
The following asymptotics hold as \(N\) grows
\begin{enumerate}
    \item If \(\alpha\in(-1,0)\): \begin{equation}
        \f{1}{N}\sum_{k=1}^{N-1}(1-k/N)(k/N)^{\alpha} \sim \f{1}{\alpha+1} - \f{1}{\alpha+2}
    \end{equation}
    \item If \(\alpha = -1\): \begin{equation}
        \f{1}{N}\sum_{k=1}^{N-1}(1-k/N)(k/N)^{-1} = \log(N) + \gamma - 1 + \frac{1}{2N} + o(N^{-2})
    \end{equation}
    \item If \(\alpha<-1\): \begin{equation}
        \f{1}{N}\sum_{k=1}^{N-1}(1-k/N)(k/N)^{\alpha} \sim \zeta(-\alpha)(1/N)^{\alpha+1}
    \end{equation}
    where \(\zeta(\cdot)\) is the Riemann zeta function.
\end{enumerate}
\end{lemma}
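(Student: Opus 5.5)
The plan is to write the sum as a Riemann sum of the function \(x\mapsto(1-x)x^{\alpha}\) and treat the three regimes by comparing with the integral near the origin. In every case I set \(S_N:=\f{1}{N}\sum_{k=1}^{N-1}(1-k/N)(k/N)^{\alpha}\) and split it as
\[
S_N = N^{-1-\alpha}\sum_{k=1}^{N-1}k^{\alpha} - N^{-2-\alpha}\sum_{k=1}^{N-1}k^{\alpha+1}.
\]
This reduces everything to the classical asymptotics of power sums \(\sum_{k<N}k^{\beta}\). Those asymptotics come from the Euler--Maclaurin formula, or more crudely from monotone comparison with \(\int k^{\beta}dk\).

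\emph{Case \(\alpha\in(-1,0)\).} The function \(f(x)=(1-x)x^{\alpha}\) is integrable on \((0,1]\) and monotone near \(0\). So \(S_N\to\int_0^1(1-x)x^{\alpha}dx=\f{1}{\alpha+1}-\f{1}{\alpha+2}\). To make this rigorous despite the singularity at \(0\), I bound the contribution of the first few terms, \(k\le \eta N\), by \(\int_0^{\eta+1/N}x^{\alpha}dx\to 0\). On \([\eta,1]\) I use the standard Riemann sum convergence for a continuous function. Alternatively, the power-sum asymptotics \(\sum_{k<N}k^{\beta}\sim N^{\beta+1}/(\beta+1)\) for \(\beta>-1\), applied to both pieces of the split, give the limit directly.

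\emph{Case \(\alpha=-1\).} The split gives \(S_N=\sum_{k=1}^{N-1}k^{-1}-\f{N-1}{N}\). This equals \(H_{N-1}-1+\f1N\), where \(H_{N-1}\) is the harmonic number. I then insert the Euler--Maclaurin expansion \(H_{N-1}=\log N+\gamma-\f{1}{2N}+O(N^{-2})\). The stated expansion follows, with \(\log N+\gamma-1+\f{1}{2N}\) plus a remainder.

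\emph{Case \(\alpha<-1\).} Here \(\sum_{k\ge1}k^{\alpha}=\zeta(-\alpha)\) converges, so the first piece equals \(N^{-1-\alpha}\bigl(\zeta(-\alpha)-\sum_{k\ge N}k^{\alpha}\bigr)\). The tail is \(O(N^{\alpha+1})\), so this piece is \(\zeta(-\alpha)N^{-1-\alpha}+O(1)\). For the second piece I distinguish three sub-cases according to \(\alpha+1\):
\begin{itemize}
\item if \(\alpha+1>-1\), the sum is \(O(N^{\alpha+2})\) (or \(O(\log N)\) at \(\alpha=-2\)), so the piece is \(O(1)\) or \(O(\log N/N^{0})\cdot N^{-2-\alpha}\);
\item if \(\alpha+1<-1\), the sum is \(O(1)\), so the piece is \(O(N^{-2-\alpha})\).
\end{itemize}
Since \(-1-\alpha>0\), the main term \(\zeta(-\alpha)N^{-1-\alpha}=\zeta(-\alpha)(1/N)^{\alpha+1}\) dominates all of these. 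The comparison \(N^{-2-\alpha}=o(N^{-1-\alpha})\) handles the worst sub-case.

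The only delicate point is bookkeeping in the third case. I must check that the correction from the \(k^{\alpha+1}\) sum is genuinely of lower order in each sub-case, including the logarithmic case \(\alpha=-2\). This follows from the elementary comparison above. For the precise \(1/(2N)\) term in case 2, the Euler--Maclaurin constant must be tracked carefully, and I will take the standard expansion of \(H_n\) as given.
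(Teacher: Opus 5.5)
Your proposal is correct. For $\alpha\in(-1,0)$ and $\alpha=-1$ it follows the paper's argument (a Riemann sum, then a reduction to harmonic numbers), but it is done more carefully. You control the singularity at $0$ explicitly. Your identity $S_N=H_{N-1}-1+\frac1N$, combined with $H_{N-1}=\log N+\gamma-\frac{1}{2N}+O(N^{-2})$, is the right bookkeeping. The paper instead writes $H_{N-1}-1$ and then applies the expansion of $H_N$; these two slips happen to cancel. Be aware that $S_N=H_N-1$ exactly, so the remainder is $-\frac{1}{12N^2}+O(N^{-4})$. The $o(N^{-2})$ in the statement therefore cannot be proved, and the $O(N^{-2})$ you actually obtain is the correct form. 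Only the $\log N$ term is used later, so nothing downstream is affected.

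The genuine difference is the case $\alpha<-1$. The paper expresses both power sums through the Hurwitz zeta function and uses its large-argument expansion. This yields the finer expansion $S_N=\zeta(-\alpha)N^{-(\alpha+1)}-\zeta(-(\alpha+1))N^{-(\alpha+2)}+\frac{1}{(\alpha+1)(\alpha+2)}+O(N^{-2})$. The price is evaluating $\zeta(-(\alpha+1))$ by analytic continuation when $-2<\alpha<-1$, where its series diverges, plus a separate computation at the pole $\alpha=-2$. You use only that the tail $\sum_{k\ge N}k^{\alpha}$ is $O(N^{\alpha+1})$, together with crude monotone-comparison bounds on $\sum_{k<N}k^{\alpha+1}$. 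These give corrections of size $O(1)$, $O(\log N)$ or $O(N^{-2-\alpha})$, all of which are $o(N^{-1-\alpha})$. Your route is fully elementary, covers $\alpha=-2$ with no extra work, and is exactly enough for the leading-order claim. The paper's machinery is only needed if one wants the subleading terms. A cosmetic point: in your first bullet, $O(\log N/N^{0})\cdot N^{-2-\alpha}$ should simply read $O(\log N)$ at $\alpha=-2$.
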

\begin{proof}

\textbf{Case 1:} in this case the summability of \(f(x)=(1-x)x^{\alpha}\) on \((0,1)\) leads to a Riemann sum type of approximation argument
\begin{equation}
\lim_{N\to\infty}\f{1}{N}\sum_{k=1}^{N-1}(1-k/N)(k/N)^{\alpha} = \int_0^1f(x)dx = \f{1}{\alpha+1} - \f{1}{\alpha+2}
\end{equation}

\textbf{Case 2:} in this case we use the well-known asymptotics of the harmonic numbers \(H_N\) which are defined as \(H_N = \sum_{k=1}^N 1/k\) and satisfy the following asymptotics as \(N\to\infty\):
\begin{equation}
H_N = \log(N) + \gamma + \frac{1}{2N} + o(N^{-2})
\end{equation}
where \(\gamma\) is the Euler-Mascheroni constant. Using this we can write 
\begin{align}
\f{1}{N}\sum_{k=1}^{N-1}(1-k/N)(k/N)^{-1} &= \sum_{k=1}^{N-1} \frac{1}{k} - \f{1}{N}\sum_{k=1}^{N-1} 1\\
&= H_{N-1} - 1 = \log(N) + \gamma - 1 + \frac{1}{2N} + o(N^{-2})
\end{align}
\textbf{Case 3:} Let \(\alpha\neq -1\). To proceed with this case we need to introduce the Hurwitz zeta function 
\[
\zeta(-\alpha, a) = \sum_{n=1}^{\infty}(n+a)^{\alpha}
\]
where \(\zeta(-\alpha,1) = \zeta(-\alpha)\) is just the Riemann zeta function. This function is useful as it allows to write 
\[
\sum_{k=1}^{N-1}k^{\alpha} = \zeta(-\alpha)-\zeta(-\alpha, N)
\]
and it satisfies the following asymptotics for large \(s\),  and \(|\arg(s)|<\pi\) (see \cite{magnusFormulasTheoremsSpecial1966} p.25 where a first order approximation is
enough for our purposes, we also disregard the constants as they will not appear in the leading term of our quantity of interest). 
\[
\zeta(-z, s)\sim \frac{1}{2}s^{z} + \frac{1}{-(z+1)}s^{1+z} + o(s^{z-1}).
\]
Let \(S_N =\f{1}{N}\sum_{k=1}^{N-1}(1-k/N)(k/N)^{\alpha}\) then we can write 
\[
S_N = (1/N)^{\alpha+1}(\zeta(-\alpha) - \zeta(-\alpha, N)) - (1/N)^{\alpha+2}(\zeta(-(\alpha+1)) - \zeta(-(\alpha +1),N))
\]
The goal now is to use the asymptotics above to show that \(\zeta(-\alpha,N)\xrightarrow{N\rightarrow\infty}0\) and the leading term is \(\zeta(-\alpha)(1/N)^{\alpha+1}\). 
Note that 
\begin{align}
N^{-(\alpha+1)}\zeta(-\alpha, N)&\sim \frac{1}{2}N^{-1} + \frac{1}{-(\alpha+1)} + o(N^{-2})\\
N^{-(\alpha+2)}\zeta(-(\alpha+1),N)&\sim \frac{1}{2}N^{-1}+\frac{1}{-(\alpha+2)} + o(N^{-2})
\end{align} 
Therefore 
\[
S_N\sim \zeta(-\alpha)(1/N)^{\alpha+1} - \zeta(-(\alpha+1))(1/N)^{\alpha+2} + \frac{1}{(\alpha+1)(\alpha+2)} + o(N^{-2})
\]
which concludes the result.

In the case \(\alpha = -2\) we need to be slightly more careful as for the second term zeta functions are not defined but the dominant term remains unchanged. Note that 
\begin{align}
(1/N)\sum_{k=1}^{N-1}(1-k/N)(k/N)^{-2} & = N\sum_{k=1}^{N-1}k^{-2} - \sum_{k=1}^{N-1}k^{-1}\\
&\sim N\zeta(2) - H_{N-1}\\
&\sim N\zeta(2) - \log(N) - \gamma + 1 + \frac{1}{2N} + o(N^{-2})
\end{align}
and therefore the leading term is still \(\zeta(2)N\) as claimed.
\end{proof}
\begin{lemma}
\label{lem: triple integral asymptotics}
Let \(\nu(s)\) be such that \(|\nu(s)|\leq C\) for all \(s\in\mathbb{R}\) and \(|\nu(s)|\sim K|s|^{\gamma}\) as \(s\to\infty\) for some \(\gamma<0\). Let \(\alpha, \beta\) and
\(m\) be such that \(\alpha+\beta+ (m-1)\gamma >-2\). Then as \(h\to\infty\) we have that
\begin{equation}
\int_{[0,h]^3}(s-u)_+^{\alpha}(s-v)_+^{\beta}\nu(u-v)^{m-1}dsdudv \asymp h^{\alpha+\beta+(m-1)\gamma+2}
\end{equation}
\end{lemma}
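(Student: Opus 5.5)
The plan is to rescale to the unit cube and pass to the limit by dominated convergence, so that $\asymp$ becomes an honest asymptotic equivalence with an explicit constant. Throughout I assume $\alpha,\beta>-1$, which is needed for the integrand to be locally integrable in $s$; this holds in the applications ($\alpha=H-3/2$, $\beta=2H-2$ or $\alpha=\beta=2H-2$ with $H>3/4$).

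\textbf{Rescaling.} Substituting $s=hx$, $u=hy$, $v=hz$ gives
\[
\int_{[0,h]^3}(s-u)_+^{\alpha}(s-v)_+^{\beta}\nu(u-v)^{m-1}\,ds\,du\,dv
= h^{\alpha+\beta+3}\int_{[0,1]^3}(x-y)_+^{\alpha}(x-z)_+^{\beta}\,\nu\bigl(h(y-z)\bigr)^{m-1}\,dx\,dy\,dz .
\]
It therefore suffices to show that
\[
h^{-(m-1)\gamma}\int_{[0,1]^3}(x-y)_+^{\alpha}(x-z)_+^{\beta}\,\nu\bigl(h(y-z)\bigr)^{m-1}\,dx\,dy\,dz
\;\longrightarrow\;
K^{m-1}\int_{[0,1]^3}(x-y)_+^{\alpha}(x-z)_+^{\beta}\,|y-z|^{(m-1)\gamma}\,dx\,dy\,dz=:I_\infty ,
\]
with $0<I_\infty<\infty$. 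This would give the claimed order $h^{\alpha+\beta+(m-1)\gamma+2}$, and in fact asymptotic equivalence with constant $I_\infty$.

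\textbf{Pointwise limit and domination.} For fixed $y\neq z$ we have $h^{-\gamma}\nu(h(y-z))\to K|y-z|^{\gamma}$ (taking $K>0$, i.e.\ $\nu$ eventually of fixed sign, as for $\rho$). For domination, combine $|\nu|\le C$ with $|\nu(t)|\le 2K|t|^\gamma$ for $|t|\ge t_0$. Since $\gamma<0$, this gives a single bound $|\nu(t)|\le C'|t|^{\gamma}$ for all $t\neq 0$, because $|t|^\gamma\ge t_0^\gamma$ on $|t|\le t_0$. Hence the rescaled integrand is bounded uniformly in $h$ by
\[
C'^{\,m-1}(x-y)_+^{\alpha}(x-z)_+^{\beta}|y-z|^{(m-1)\gamma}.
\]

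\textbf{Integrability of the dominating kernel (main obstacle).} The hard step is showing this kernel is integrable on $[0,1]^3$, and this is where the hypothesis $\alpha+\beta+(m-1)\gamma>-2$ enters. I would split into three regions.
\begin{itemize}
\item Near the triple diagonal $x=y=z$: the kernel is homogeneous of degree $\alpha+\beta+(m-1)\gamma$ in the two transverse variables $(x-y,x-z)$. Polar coordinates in that plane give integrability exactly when the degree exceeds $-2$.
\item Near the faces $x=y$ and $x=z$ with the other variables separated: integrability follows from $\alpha,\beta>-1$.
\item Near the face $y=z$ with $x$ separated: integrability requires $(m-1)\gamma>-1$.
\end{itemize}
I expect to have to add $(m-1)\gamma>-1$ as an implicit hypothesis, and I would check it in each application. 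If $(m-1)\gamma\le -1$, the strip $|u-v|\le 1$ alone contributes of order $h^{\alpha+\beta+2}$, which dominates $h^{\alpha+\beta+(m-1)\gamma+2}$, so the stated order would fail. In that case I would treat the strip separately; this is the point I would scrutinise first.

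\textbf{Positivity of the limit and lower bound.} The limit $I_\infty$ is strictly positive since its integrand is positive on a set of positive measure. For the lower bound without sign assumptions on $\nu$ near the origin, restrict to the subregion $\{|y-z|\ge\eta\}$. There $\nu(h(y-z))$ has a fixed sign for $h$ large, and the contribution of this subregion converges to a positive multiple of $h^{(m-1)\gamma}$. Together with dominated convergence on the whole cube, this gives both the upper and lower bounds in the statement.
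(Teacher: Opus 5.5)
\textbf{The exponent.} Your rescaling produces the prefactor $h^{\alpha+\beta+3}$, so dominated convergence gives $I_\infty\,h^{\alpha+\beta+(m-1)\gamma+3}$, not ``the claimed order'' $h^{\alpha+\beta+(m-1)\gamma+2}$.

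The exponent in the statement is a typo, and read literally the statement is false. Already for $m=1$ the integral is exactly $h^{\alpha+\beta+3}/\bigl((\alpha+1)(\beta+1)(\alpha+\beta+3)\bigr)$. The paper's own proof ends with $\int_0^h F(s)\,ds\asymp h^{\alpha+\beta+\gamma+3}$. Both places where the lemma is used (the diagonal part of term (3.3), and the diagonal terms in the proof of Lemma~\ref{lem: convergence of the inner product}) plug in the exponent $+3$. You should say explicitly that you are proving the corrected statement, rather than that your argument delivers the printed one.

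The same slip makes your strip remark inconsistent. Measured against $h^{\alpha+\beta+(m-1)\gamma+2}$, the strip's $h^{\alpha+\beta+2}$ would dominate for every $m>1$, whatever the value of $(m-1)\gamma$. Measured against the correct $h^{\alpha+\beta+(m-1)\gamma+3}$, it is negligible exactly when $(m-1)\gamma>-1$, with a logarithmic correction at $-1$. So your threshold is the right one, but only for the $+3$ exponent.

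\textbf{Comparison with the paper's route.} With the exponent fixed, your argument is sound and takes a different route. The paper fixes $s$, sets $x=s-u$, $y=s-v$, and analyses $F(s)=\int_{[0,s]^2}x^\alpha y^\beta\nu(|x-y|)^{m-1}\,dx\,dy$. It splits into the band $|x-y|\le 1$ (of order $s^{\alpha+\beta+1}$, $\log s$ or $1$) and its complement. On the complement it simply replaces $\nu$ by its power tail, written as an equality, to get $s^{\alpha+\beta+(m-1)\gamma+2}\int_{[0,1]^2}x^\alpha y^\beta|x-y|^{(m-1)\gamma}$, and then integrates in $s$.

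Your approach uses a global rescaling, the single bound $|\nu(t)|\le C'|t|^{\gamma}$, and dominated convergence. This makes that tail replacement rigorous and absorbs the band automatically. It also upgrades $\asymp$ to a genuine equivalence with explicit constant $K^{m-1}\int_{[0,1]^3}(x-y)_+^\alpha(x-z)_+^\beta|y-z|^{(m-1)\gamma}$. The paper's route, by contrast, displays the lower-order near-diagonal contributions explicitly.

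The extra hypotheses you need are $\alpha,\beta>-1$, $(m-1)\gamma>-1$, and $\nu$ of one sign with the same tail at $\pm\infty$. The paper uses these silently too: its Beta factor $\beta(\beta+1,\gamma+1)$ and its closing ``since $\gamma>-1$'' rely on them, where its $\gamma$ stands for $(m-1)\gamma$. They hold in the applications, since $H^*(m)>1/2$ gives $(m-1)(2H-2)>m(2H-2)>-1$.

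Two small remarks. Your three-region integrability check is cleanest via $a=x-y$, $b=x-z$. This factorises the kernel integral as $\int_0^1 x^{\alpha+\beta+(m-1)\gamma+2}\,dx$ times $\int_{[0,1]^2}a^\alpha b^\beta|a-b|^{(m-1)\gamma}\,da\,db$, which is exactly the paper's two-dimensional integral. Also, your separate lower-bound paragraph is superfluous: dominated convergence on the whole cube already yields the nonzero limit $I_\infty$. Restricting a signed integrand to a subregion would not by itself give a lower bound anyway.
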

\begin{proof}
\begin{align}
\int_{[0,h]^3}(s-u)_+^{\alpha}(s-v)_+^{\beta}\nu(u-v)^{m-1}dsdudv &= \begin{multlined}
\int_0^h\int_0^s\int_0^s(s-u)^{\alpha}(s-v)^{\beta} \\ \nu(u-v)^{m-1}dudvds
\end{multlined}\\
&=\int_0^h\int_{[0,s]^2}x^{\alpha}y^{\beta}\nu(|x-y|)^{m-1}dxdyds\\
&=\int_0^h F(s)ds
\end{align}
and we study now the behaviour of \(F(s)\) in the different regimes. Around the diagonal we have that
\begin{align}
\int_{\{(x,y)\in[0,s]^2: |x-y|\leq 1\}}x^{\alpha}y^{\beta}&\nu(|x-y|)^{m-1}dxdy  \\
&\asymp\int_0^s\int_{(x-1)\vee 0}^{x+1} x^{\alpha}y^{\beta}dxdy\\
&\asymp\int_0^s x^{\alpha} \left((x+1)^{\beta+1} - ((x-1)_+)^{\beta+1}\right)dx\\ 
&=\int_0^1 x^{\alpha}(x+1)^{\beta+1}dx + \int_1^s x^{\alpha}\left((x+1)^{\beta+1} - (x-1)^{\beta+1}\right)dx\\
&\asymp \begin{cases}
    s^{\alpha+\beta+1}\quad &\alpha + \beta>-1, s>1\\
    \log(s) \quad &\alpha + \beta = -1, s>1\\
    1 \quad &s\leq 1
\end{cases}
\end{align}
and away from the diagonal we have that 
\begin{align}
\int_{\{(x,y)\in[0,s]^2: |x-y|> 1\}}x^{\alpha}y^{\beta}&\nu(|x-y|)^{m-1}dxdy\\
&= \int_{\{(x,y)\in[0,s]^2: |x-y|> 1\}}x^{\alpha}y^{\beta}|x-y|^{\gamma}dxdy\\
&= s^{\alpha+\beta+\gamma+2}\int_{\{(x,y)\in[0,1]^2: |x-y|> 1/s\}}x^{\alpha}y^{\beta}|x-y|^{\gamma}dxdy\\
&\asymp s^{\alpha+\beta+\gamma+2}\int_{[0,1]^2}x^{\alpha}y^{\beta}|x-y|^{\gamma}dxdy
\end{align}
provided that the latter integral is finite, which we now show is the case if \(\alpha+\beta+\gamma > -2\). 
\begin{align}
\int_{[0,1]^2}x^{\alpha}y^{\beta}|x-y|^{\gamma}dxdy & = \int_0^1\int_0^x x^{\alpha}y^{\beta}(x-y)^{\gamma}dydx + \int_0^1\int_x^1 x^{\alpha}y^{\beta}(y-x)^{\gamma}dydx\\
&= \int_0^1\int_0^x x^{\alpha}y^{\beta}(x-y)^{\gamma}dydx + \int_0^1\int_0^y x^{\alpha}y^{\beta}(y-x)^{\gamma}dydx
\end{align}
For the first term we can do 
\begin{align}
\int_0^1 x^{\alpha}\int_0^x y^{\beta}(x-y)^{\gamma}dydx & = \int_0^1 x^{\alpha+1} \int_0^1 (xt)^{\beta}(x-xt)^{\gamma}dt dx\\
&= \int_0^1 x^{\alpha+\beta+\gamma+1}\int_0^1 t^{\beta}(1-t)^{\gamma}dtdx\\
&= \beta(\beta+1, \gamma+1)\int_0^1 x^{\alpha+\beta+\gamma+1}dx\\
&= \frac{\beta(\beta+1, \gamma+1)}{\alpha+\beta+\gamma+2}
\end{align}
and for the second one we can get through the exact same argument that it is equal to \(\frac{\beta(\alpha+1, \gamma+1)}{\alpha+\beta+\gamma+2}\). We can thus conclude that 
the behaviour away from the diagonal dominates since 
\begin{equation}
F(s)\asymp s^{\alpha+\beta+\gamma+2}  + G(s) + 1
\end{equation}
where
\begin{equation}
G(s)\asymp\begin{cases}
    s^{\alpha+\beta+1}\quad &\alpha + \beta>-1, s>1\\
    \log(s) \quad &\alpha + \beta = -1, s>1
\end{cases}
\end{equation}
and thus in any case it is dominated by the first term as \(s\to\infty\) since \(\gamma > -1\). We thus conclude the proof by going back to the original integral and writing 
\begin{equation}
\int_0^h F(s)ds \asymp \int_0^1 1ds + \int_1^h s^{\alpha+\beta+\gamma+2}ds \asymp h^{\alpha+\beta+\gamma+3}
\end{equation}
\end{proof}
\section{Auxiliary Integrals}
For the sake of completeness we include here some auxiliary integrals that are used in the main proofs. These are all based on the properties of the Beta function and known results. 
\begin{lemma}
    \label{lem: aux integral 1}
For any \(1/2<H<1\) we have that 
\begin{equation}
\label{eq: will see}
\int_{-\infty}^{u\vee v}(u-x)^{H-3/2}(v-x)^{H-3/2}dx = \beta(H-1/2,2-2H)|u-v|^{2H-2}
\end{equation} 
\end{lemma}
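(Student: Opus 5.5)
Since the integrand is symmetric in \(u\) and \(v\), I will assume without loss of generality that \(v\le u\), so that \(u\vee v=u\). The factor \((u-x)^{H-3/2}\) then only makes sense for \(x<u\), while \((v-x)^{H-3/2}\) requires \(x<v\). Reading both powers as positive parts \((\cdot)_+\), as in the definition \eqref{eq: gehringer hermite process} of the Hermite kernel, the effective domain of integration is \((-\infty,v]=(-\infty,u\wedge v]\).

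The plan is a change of variables that reduces the integral to a Beta integral.
\begin{itemize}
\item Set \(d=u-v\ge 0\) and substitute \(y=v-x\in(0,\infty)\). The integral becomes
\[\int_0^\infty y^{H-3/2}(y+d)^{H-3/2}\,dy.\]
\item Rescale by \(y=dt\). This gives
\[d^{2H-2}\int_0^\infty t^{H-3/2}(1+t)^{H-3/2}\,dt,\]
because the exponents combine as \((H-3/2)+(H-3/2)+1=2H-2\).
\item Use the standard representation \(\beta(a,b)=\int_0^\infty t^{a-1}(1+t)^{-a-b}\,dt\). Here \(a-1=H-3/2\) gives \(a=H-1/2\), and \(a+b=3/2-H\) gives \(b=2-2H\). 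Hence the integral equals \(\beta(H-1/2,2-2H)\,|u-v|^{2H-2}\).
\end{itemize}

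The only real checking is convergence.
\begin{itemize}
\item Near \(t=0\) the integrand behaves like \(t^{H-3/2}\), which is integrable since \(H>1/2\) gives \(a=H-1/2>0\).
\item At infinity it behaves like \(t^{2H-3}\), which is integrable since \(H<1\) gives \(2H-3<-1\), equivalently \(b=2-2H>0\).
\end{itemize}

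So the hypothesis \(1/2<H<1\) is exactly what is needed. The case \(u=v\) gives a divergent integral, consistent with \(|u-v|^{2H-2}=\infty\), and is a null set in all later applications.
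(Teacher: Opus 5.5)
Your proposal is correct and follows essentially the same route as the paper: translate by \(s=(u\wedge v)-x\), rescale by \(|u-v|\), and recognise \(\int_0^\infty t^{H-3/2}(1+t)^{H-3/2}\,dt=\beta(H-1/2,2-2H)\). You also make explicit two things the paper's computation only uses implicitly: the positive-part reading, under which the effective upper limit is \(u\wedge v\), and the convergence of the Beta integral at \(0\) and at \(\infty\), which is exactly where \(1/2<H<1\) is needed.
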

\begin{proof}
\begin{align}
\int_{-\infty}^{u\vee v}(u-x)^{H-3/2}&(v-x)^{H-3/2}dx= \int_0^{\infty}(s+|u-v|)^{H-3/2}s^{H-3/2}ds\\
 &=|u-v|^{2H-3}\int_0^{\infty}\left(\frac{s}{|u-v|}\right)^{H-3/2}\left(\frac{s}{|u-v|}\right)^{H-3/2}ds\\ 
 &=|u-v|^{2H-2}\int_0^{\infty}(1+t)^{H-3/2}t^{H-3/2}dt\\
 &=\beta(H-1/2,2-2H)|u-v|^{2H-2}
\end{align}
\end{proof}
\begin{lemma}
    \label{lem: aux integral 2}
For any \(1/2<H<1\) we have that
\begin{equation}
\int_{u\vee v}^{1}(s-u)^{H-3/2}(s-v)^{H-3/2}ds < |u-v|^{2H-2}\beta(H-1/2,2-2H).
\end{equation}
\begin{proof}
\begin{align}
\int_{u\vee v}^{1}(s-u)^{H-3/2}&(s-v)^{H-3/2}ds = \int_0^{1-u\vee v}(s+|u-v|)^{H-3/2}s^{H-3/2}ds\\
&=|u-v|^{2H-3} \int_0^{1-u\vee v}\left(\frac{s}{|u-v|}\right)^{H-3/2}\left(\frac{s}{|u-v|}+1\right)^{H-3/2}ds\\
&=|u-v|^{2H-2}\int_0^{(1-u\vee v)/|u-v|}t^{H-3/2}(t+1)^{H-3/2}dt\\
&< |u-v|^{2H-2}\int_0^{\infty}t^{H-3/2}(t+1)^{H-3/2}dt\\
&=|u-v|^{2H-2}\beta(H-1/2,2-2H)
\end{align} 
\end{proof}
\end{lemma}
\begin{lemma}
    \label{lem: aux integral 3}
For any \(3/4<H<5/6\) we have that, if \(u>v\)  
\begin{equation}
    \int_u^1 (s-u)^{H-3/2}(s-v)^{2H-2}ds < |u-v|^{3H-5/2}\beta(H-1/2, 5/2-3H)
\end{equation}
whereas if \(v>u\) we have that 
\begin{equation}
    \int_v^1 (s-u)^{H-3/2}(s-v)^{2H-2}ds < |u-v|^{3H-5/2}\beta(2H-1, 5/2-3H) 
\end{equation}
\end{lemma}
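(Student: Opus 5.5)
The plan is to reduce both inequalities to Beta-function integrals by a shift and rescaling, following Lemma~\ref{lem: aux integral 2}. Take first \(u>v\) and set \(d=u-v>0\). The substitution \(s=u+t\) gives
\[
\int_u^1 (s-u)^{H-3/2}(s-v)^{2H-2}\,ds=\int_0^{1-u} t^{H-3/2}(t+d)^{2H-2}\,dt .
\]
Rescaling \(t=d\,w\) turns this into
\[
d^{\,H-3/2+2H-2+1}\int_0^{(1-u)/d} w^{H-3/2}(1+w)^{2H-2}\,dw
= d^{\,3H-5/2}\int_0^{(1-u)/d} w^{H-3/2}(1+w)^{2H-2}\,dw .
\]
Since the integrand is positive, I bound the truncated integral strictly by the integral over \((0,\infty)\). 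This step only needs the latter integral to be finite.

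The second step identifies that full integral with the standard representation
\[
\int_0^\infty w^{a-1}(1+w)^{-(a+b)}\,dw=\beta(a,b),\qquad a,b>0 .
\]
Here \(a-1=H-3/2\), so \(a=H-1/2>0\) because \(H>1/2\). Also \(a+b=2-2H\), so \(b=2-2H-H+1/2=5/2-3H\). This gives exactly \(\beta(H-1/2,5/2-3H)\).

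The main point is that \(b>0\) is equivalent to \(H<5/6\). This is precisely where the hypothesis enters, since the integral diverges at infinity for \(H\ge 5/6\). This matches the split at \(H=5/6\) in the treatment of (3.3) and (2.2).

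For \(v>u\), set \(d=v-u\) and substitute \(s=v+t\). This gives
\[
\int_0^{1-v}(t+d)^{H-3/2}\,t^{2H-2}\,dt .
\]
The same rescaling produces the same power \(d^{3H-5/2}\) times
\[
\int_0^{(1-v)/d} w^{2H-2}(1+w)^{H-3/2}\,dw .
\]
Now \(a=2H-1>0\) and \(a+b=3/2-H\), so \(b=3/2-H-2H+1=5/2-3H\). The bound \(\beta(2H-1,5/2-3H)\) again needs only \(H<5/6\). The lower restriction \(H>3/4\) is not used beyond guaranteeing \(H>1/2\).
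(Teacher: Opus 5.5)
Your proposal is correct and is essentially the paper's proof. The shift $s=u+t$ (resp.\ $s=v+t$) and the rescaling by $|u-v|$ extract the factor $|u-v|^{3H-5/2}$, and the truncated integral is then bounded strictly by the integral over $(0,\infty)$. You also identify those full integrals explicitly as $\beta(H-1/2,5/2-3H)$ and $\beta(2H-1,5/2-3H)$ via $\int_0^\infty w^{a-1}(1+w)^{-(a+b)}\,dw=\beta(a,b)$, noting that $b=5/2-3H>0$ exactly when $H<5/6$; the paper leaves this step implicit.
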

\begin{proof}
In the first case we have 
\begin{align}
 \int_u^1 (s-u)^{H-3/2}(s-v)^{2H-2}ds &= (u-v)^{3H-5/2}\int_0^{(1-u)/(u-v)}s^{H-3/2}(1+s)^{2H-2}ds\\
 & < (u-v)^{3H-5/2}\int_0^{\infty}s^{H-3/2}(1+s)^{2H-2}ds 
\end{align} 
and in the second case we have 
\begin{align} 
\int_v^1 (s-u)^{H-3/2}(s-v)^{2H-2}ds &= (v-u)^{3H-5/2}\int_0^{(1-v)/(v-u)}s^{2H-2}(1+s)^{H-3/2}ds \\
&< (v-u)^{3H-5/2}\int_0^{\infty}s^{2H-2}(1+s)^{H-3/2}ds
\end{align}
\end{proof}
\begin{lemma}
    \label{lem: aux integral 4}
For any \(3/4<H<1\) and \(u>v\) we have that 
\begin{equation}
\int_v^u (s-u)^{H-3/2}(s-v)^{2H-2}ds = (u-v)^{3H-5/2}\beta(H-1/2, 2H-1)
\end{equation}
\end{lemma}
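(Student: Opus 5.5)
The statement is Lemma \ref{lem: aux integral 4}: for \(u>v\), compute \(\int_v^u (s-u)^{H-3/2}(s-v)^{2H-2}\,ds\). As written, \(s-u<0\) on \((v,u)\), so the first factor must be read as \((u-s)^{H-3/2}\), the distance from the endpoint \(u\). This is exactly how it arises in the bound for term (2.2) of Appendix~\ref{sec: convergence inner product appendix}. The plan is the same affine rescaling used in Lemmas~\ref{lem: aux integral 1}--\ref{lem: aux integral 3}, which maps the interval onto \([0,1]\) and produces a Beta integral.

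The substitution is \(s=v+t(u-v)\) with \(t\in[0,1]\), so \(ds=(u-v)\,dt\). Under it, \(s-v=t(u-v)\) and \(u-s=(1-t)(u-v)\). Collecting powers of \(u-v\) gives the exponent
\[
(H-3/2)+(2H-2)+1=3H-5/2,
\]
so the integral becomes
\[
(u-v)^{3H-5/2}\int_0^1 t^{2H-2}(1-t)^{H-3/2}\,dt .
\]

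The remaining integral is, by definition, \(\beta(2H-1,\,H-1/2)=\beta(H-1/2,\,2H-1)\), using the symmetry of the Beta function. Both arguments are positive for \(H>1/2\), so the integral converges. Indeed \(H>3/4\) is more than is needed here; that restriction comes from where the lemma is applied, not from the computation. This yields the stated identity with constant \(\beta(H-1/2,2H-1)\).

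There is no real obstacle; the computation is two lines. The one point to flag is the sign convention \((s-u)^{H-3/2}\mapsto (u-s)^{H-3/2}\). It should be stated explicitly so that the result matches its use in term (2.2), where the kernel factor is evaluated at distance \(u-s\).
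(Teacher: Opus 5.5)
Your argument is correct and is essentially the paper's: an affine change of variables pulls out $(u-v)^{3H-5/2}$ and leaves a Beta integral, with $(s-u)^{H-3/2}$ necessarily read as $(u-s)^{H-3/2}$. One small correction to your remark about term (2.2): there the exponents sit the other way round, $(s-v)^{H-3/2}(u-s)^{2H-2}$, but the reflection $s\mapsto u+v-s$ shows this gives the same value. Your single substitution $s=v+t(u-v)$ is actually cleaner than the paper's translate-then-rescale, whose intermediate integrand $s^{H-3/2}(s+1)^{2H-2}$ has a sign slip: it should be $(1-s)^{2H-2}$, and for $H\in(3/4,1)$ one has $\int_0^1 s^{H-3/2}(1+s)^{2H-2}\,ds<\beta(H-1/2,2H-1)$. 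So your derivation is the one that actually justifies the stated constant.
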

\begin{proof}
\begin{align}
\int_v^u (s-u)^{H-3/2}(s-v)^{2H-2}ds &= \int_0^{u-v}s^{H-3/2}(s+u-v)^{2H-2}ds\\
&= (u-v)^{3H-5/2}\int_0^1s^{H-3/2}(s+1)^{2H-2}ds\\
&= (u-v)^{3H-5/2}\beta(H-1/2, 2H-1)
\end{align}
\end{proof}
\section{The fOU kernel}
We note that the fastened fractional OU process we are interested in admits the following kernel representation when \(H>1/2\):
\begin{equation}
Y^{\varepsilon}_t = p\,\varepsilon^{-1/2}\int_{\mathbb{R}}e^{-\frac{t-s}{\varepsilon}}\int_0^{\frac{t-s}{\varepsilon}}e^v (v)_+^{H-3/2}dv dW_s
\end{equation}
let for now \(g(s)=e^{-s}\int_0^se^v v^{H-3/2}dv\) so that the kernel representation can be written as
\begin{equation}
Y^{\varepsilon}_t = p\,\varepsilon^{-1/2}\int_{-\infty}^t g\left(\frac{t-s}{\varepsilon}\right)dW_s
\end{equation}
we study now the asymptotics of this function \(g(s)\). The function \(g\) is continuous and bounded on \(\mathbb{R}\) and it satisfies \(g(0)=0\) and 
\(g(s)\sim s^{H-1/2}\) as \(s\to 0\) and \(g(s)\sim s^{H-3/2}\) as \(s\to\infty\). The asymptotic behaviour near \(0\) is pretty much irrelevant but the 
important bit is that there exists a finite value \(s^*\) such that \(g(s^*) = \sup_{s\in\mathbb{R}}g(s)\) and before that value we can bound the kernel by 
a constant and after that value we can use its asymptotic behaviour. Note that 
\[\lim_{s\to\infty}g(s)s^{3/2-H} = 1\]
in the sense that there is no constant missing there. Let now \(\tau_t K^{\varepsilon}(x) = \varepsilon^{-1/2}g\left(\frac{t-x}{\varepsilon}\right)\). We are interested 
in the quantity: 
\begin{equation}
\int_{-\infty}^{t\wedge s}\tau_tK^{\varepsilon}(x)(s-x)^{H-3/2}dx
\end{equation}
In the case when \(s<t\) then \(t-x>t-s\) and therefore we can use the asymptotic behaviour of the kernel to get that, as \(\varepsilon\to 0\):
\begin{equation}
\int_{-\infty}^{t\wedge s}\tau_tK^{\varepsilon}(x)(s-x)^{H-3/2}dx\sim \varepsilon^{1-H}(t-s)^{2H-2}\beta(H-1/2,2-2H)
\end{equation}
The case when \(s>t\) is more delicate as we need to filter out the constant behaviour. We can write 
\begin{align}
\int_{-\infty}^{t}\tau_tK^{\varepsilon}(x)(s-x)^{H-3/2}dx & =\int_{-\infty}^{t-s^*\varepsilon}\tau_tK^{\varepsilon}(x)(s-x)^{H-3/2}dx + \\ &\int_{t-s^*\varepsilon}^t\tau_tK^{\varepsilon}(x)(s-x)^{H-3/2}dx
\\
&=A_t^{<} + A_t^{\varepsilon}
\end{align}
In the first term we can use the asymptotic behaviour of the kernel to get that, as \(\varepsilon\to 0\):
\begin{align}
A_t^{<} & \sim \varepsilon^{1-H}\int_{-\infty}^{t-s^*\varepsilon}(t-x)^{H-3/2}(s-x)^{H-3/2}dx\\
&=\varepsilon^{1-H}\int_{-\infty}^{-s^*\varepsilon}(-x)^{H-3/2}(-x + (s-t))^{H-3/2}dx\\
&=\varepsilon^{1-H}(s-t)^{2H-3}\int_{s^*\varepsilon}^{\infty}\left(\frac{x}{s-t}\right)^{H-3/2}\left(\frac{x}{s-t} + 1\right)^{H-3/2}dx\\
&=\varepsilon^{1-H}(s-t)^{2H-2}\int_{s^*\varepsilon/(s-t)}^{\infty}t^{H-3/2}(t+1)^{H-3/2}dt\\
&\sim \varepsilon^{1-H}(s-t)^{2H-2}\beta(H-1/2,2-2H)
\end{align}
Instead the second term is controlled by 
\begin{align}
A_t^{\varepsilon} &\lesssim \varepsilon^{-1/2}\int_{t-s^*\varepsilon}^t(s-x)^{H-3/2}dx\\
&\leq \varepsilon^{-1/2}\int_{t-s^*\varepsilon}^t(s-t)^{H-3/2}dx =\varepsilon^{1/2}(s-t)^{H-3/2} 
\end{align}
where we simply use the boundedness of the kernel as that dominates the behaviour there. We sum up this behaviour in the following lemma. 
\begin{lemma}
    \label{lem: fOU kernel asymptotics}
Let \(\tau_t K^{\varepsilon}(x) = \varepsilon^{-1/2}g\left(\frac{t-x}{\varepsilon}\right)\) where \(g(s) = e^{-s}\int_0^se^vv^{H-3/2}dv\) be the kernel of the fastened fractional OU process for \(H>1/2\). Then as \(\varepsilon\to 0\) we have 
that \begin{itemize}
    \item If \(s<t\) then as \(\varepsilon\to 0\):
    \begin{equation}
    \int_{-\infty}^{t\wedge s}\tau_tK^{\varepsilon}(x)(s-x)^{H-3/2}dx \sim \varepsilon^{1-H}(t-s)^{2H-2}\beta(H-1/2,2-2H)
    \end{equation}
    \item If \(s>t\) then as \(\varepsilon\to 0\) we have the following decomposition \begin{equation}
\int_{-\infty}^{t\wedge s}\tau_tK^{\varepsilon}(x)(s-x)^{H-3/2}dx = A_t^{<} + A_t^{\varepsilon}
    \end{equation}
    where the first term behaves as \begin{equation}
A_t^{<} \sim \varepsilon^{1-H}(s-t)^{2H-2}\beta(H-1/2,2-2H)
    \end{equation}
and the second term is controlled by
\begin{equation}
A_t^{\varepsilon} \lesssim \varepsilon^{1/2}(s-t)^{H-3/2} 
\end{equation}
\end{itemize}

\end{lemma}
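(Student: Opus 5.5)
The plan is to work directly with the explicit form of the kernel \(\tau_tK^{\varepsilon}(x)=\varepsilon^{-1/2}g\bigl((t-x)/\varepsilon\bigr)\). It relies on two properties of \(g\): it is bounded, and \(g(r)\sim r^{H-3/2}\) as \(r\to\infty\) with no missing constant. We substitute \(x=t-\varepsilon w\), which turns the integral into \(\varepsilon^{1/2}\int g(w)\,(s-t+\varepsilon w)^{H-3/2}\,dw\) over the appropriate range of \(w\). The analysis then splits according to the sign of \(s-t\).

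\emph{Case \(s<t\).} Here the upper limit is \(t\wedge s=s\), so \(w\geq (t-s)/\varepsilon\to\infty\). Uniformly on this range \(g(w)=w^{H-3/2}(1+o(1))\). Substituting back, the integral equals \((1+o(1))\,\varepsilon^{1-H}\int_{-\infty}^{s}(t-x)^{H-3/2}(s-x)^{H-3/2}dx\). The last integral is \(\beta(H-1/2,2-2H)(t-s)^{2H-2}\) by Lemma~\ref{lem: aux integral 1}, which gives the first claim. The uniformity of the \(o(1)\) comes from \(\sup_{w\geq R}|g(w)w^{3/2-H}-1|\to 0\) as \(R\to\infty\). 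Integrability at \(-\infty\) holds because \(2H-3<-1\).

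\emph{Case \(s>t\).} Fix \(s^*\) as in the text and split at \(x=t-s^*\varepsilon\), giving \(A^{<}_t+A^{\varepsilon}_t\).

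For \(A^{\varepsilon}_t\) we use \(g\leq \sup g\) together with \((s-x)^{H-3/2}\leq (s-t)^{H-3/2}\) for \(x\leq t\). Since the interval has length \(s^*\varepsilon\), this gives \(A^{\varepsilon}_t\lesssim \varepsilon^{1/2}(s-t)^{H-3/2}\).

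For \(A^{<}_t\) the kernel argument is \(w\geq s^*\), but this is no longer tending to infinity. So the asymptotic replacement of \(g(w)\) by \(w^{H-3/2}\) is not uniform, and this is the main obstacle. To handle it, further split at \(w=R\) for a large fixed \(R\).

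\emph{Far region \(w\geq R\).} Here \(g(w)=w^{H-3/2}(1+\eta_R)\) with \(|\eta_R|\) small. After rescaling \(x=t-(s-t)\tau\), this part equals \((1+O(\eta_R))\,\varepsilon^{1-H}(s-t)^{2H-2}\int_{R\varepsilon/(s-t)}^{\infty}\tau^{H-3/2}(1+\tau)^{H-3/2}d\tau\). The integral tends to \(\beta(H-1/2,2-2H)\) because \(\tau^{H-3/2}\) is integrable at \(0\) (as \(H>1/2\)).

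\emph{Intermediate region \(s^*\leq w\leq R\).} By the same bound used for \(A^{\varepsilon}_t\), this piece is \(O(R\,\varepsilon^{1/2}(s-t)^{H-3/2})\). That is \(o(\varepsilon^{1-H})\) for fixed \(s>t\), since \(1/2>1-H\).

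Letting \(\varepsilon\to0\) and then \(R\to\infty\) yields \(A^{<}_t\sim\varepsilon^{1-H}(s-t)^{2H-2}\beta(H-1/2,2-2H)\). The intermediate region could alternatively be absorbed into \(A^{\varepsilon}_t\) by choosing \(s^*\) large. Either way, all the statements are pointwise in fixed \(s\neq t\), which is how the lemma is used in the proof of Lemma~\ref{lem: convergence of the inner product}.
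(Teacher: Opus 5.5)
Your proof is correct and follows the paper's own argument. For $s<t$ you replace $g$ by its tail asymptotics $g(w)\sim w^{H-3/2}$ and evaluate with Lemma~\ref{lem: aux integral 1}; for $s>t$ you split at $x=t-s^*\varepsilon$, bound $A^{\varepsilon}_t$ using boundedness of $g$, and obtain $A^{<}_t$ by rescaling to a Beta integral whose lower limit vanishes. Your extra cut at $w=R$ is a genuine improvement: the intermediate piece is $O(R\,\varepsilon^{1/2})=o(\varepsilon^{1-H})$, and this justifies replacing $g$ by its asymptotics on $A^{<}_t$, where the argument only satisfies $w\geq s^*$ and the paper makes the replacement without comment.
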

\endappendix

\newpage
\pagestyle{empty}
\bibliographystyle{plain}
\bibliography{bibliography}
\end{document}